\newcommand{\Teichmuller}{Teich\-m\"uller\xspace}
\newcommand{\ZZ}{\mathbb{Z}}
\newcommand{\CC}{\mathbb{C}}
\newcommand{\PP}{\mathbb{P}}
\newcommand{\NN}{\mathbb{N}}
\newcommand{\HH}{\mathbb{H}}
\newcommand{\QQ}{\mathbb{Q}}
\newcommand{\RR}{\mathbb{R}}
\renewcommand{\r}{r}
\newcommand{\Jac}{{\rm Jac} \,}
\newcommand{\Gal}{{\rm Gal}}
\newcommand{\fraco}{\mathfrak{o}}
\newcommand{\cEE}{{\mathcal E}}
\newcommand{\cNN}{{\mathcal N}}
\newcommand{\SL}{{\rm SL}}
\newcommand{\GL}{{\rm GL}}
\newtheorem{Defi}{Definition}[section]
\newtheorem{Rem}[Defi]{Remark}
\newtheorem{Prop}[Defi]{Proposition}
\newtheorem{Lemma}[Defi]{Lemma}
\newtheorem{Cor}[Defi]{Corollary}
\newtheorem{Thm}[Defi]{Theorem}
\newtheorem{Conv}{Convention}
\newcommand{\odd}{{\rm odd}}
\newcommand{\hyp}{{\rm hyp}}
\newcommand{\red}{{\rm red}}
\DeclareMathOperator{\Tr}{Tr}
\newcommand{\moduli}[1][g]{{\mathcal M}_{#1}}
\newcommand{\barmoduli}[1][g]{{\overline{\mathcal M}}_{#1}}
\newcommand{\omoduli}[1][g]{{\Omega\mathcal M}_{#1}}
\newcommand{\Prym}[1][g]{{\rm Prym}_{#1}}
\def\={\;=\;}  \def\+{\,+\,} \def\m{\,-\,}
\def\be{\begin{equation}}   \def\ee{\end{equation}}     \def\bes{\begin{equation*}}    \def\ees{\end{equation*}}
\def\ba{\be\begin{aligned}} \def\ea{\end{aligned}\ee}   \def\bas{\bes\begin{aligned}}  \def\eas{\end{aligned}\ees}
\begin{document}{\large}
\title[Teichm\"uller  curves in Prym loci]
{Non-existence and finiteness results for \\ Teichm\"uller curves 
in Prym loci}

\date{\today}
\author{Erwan Lanneau and Martin M\"oller}

\address{
UMR CNRS 5582 \newline
Univ. Grenoble Alpes, CNRS, Institut Fourier, F-38000 Grenoble, France}
\email{erwan.lanneau@univ-grenoble-alpes.fr}

\address{Institut fur Mathematik, Goethe-Universit\"at Frankfurt, 
Robert-Mayer-Str. 6-8 60325 Frankfurt am Main, Germany
}
\email{moeller@math.uni-frankfurt.de}

\begin{abstract}
The minimal stratum in Prym loci have been the first source of
infinitely many primitive, but not algebraically primitive
\Teichmuller curves. We show that the stratum Prym(2,1,1) contains
no such \Teichmuller curve and the stratum Prym(2,2) at most $92$
such \Teichmuller curves. 
\par
This complements the recent progress establishing  general
-- but non-effective --
methods to prove finiteness results for \Teichmuller curves and
serves as proof of concept how to use the torsion condition
in the non-algebraically primitive case.
\end{abstract}

\maketitle
\setcounter{tocdepth}{1}
\tableofcontents


\section{Introduction}

\Teichmuller curves are isometrically immersed algebraic curves
$C = \HH/\Gamma \to \moduli[g]$ in the moduli space of genus $g$ Riemann
surfaces. These arise as $\SL_2(\RR)$-orbits of special flat 
surfaces $(X,\omega)$ or half-translation surfaces $(X,q)$ that are 
called Veech surfaces.
By a canonical double covering construction half-translation surfaces can be
reduced to flat surfaces, the classification problem for \Teichmuller curves
is primarily focused on those generated by flat surfaces. We will exclusively 
deal with this case in the sequel.
\par
A Veech surface is called {\em primitive}, if it is does not arise 
from a flat Veech surface of lower genus via a covering construction.
The {\em trace field} $K = \QQ(\Tr(\gamma) : \gamma \in \Gamma)$ is a 
useful invariant of a Veech surface. If $[K:\QQ] = g$, then the Veech 
surface is called {\em algebraically primitive}. This notion implies that 
the Veech surface is primitive, but the converse does not hold, examples 
being given by \Teichmuller curves in Prym loci that we define below.
\Teichmuller curves are called (algebraically) primitive, if the
generating Veech surfaces have this property.
\par
The classification problem for \Teichmuller curves can be subdivided
into the classification of primitive \Teichmuller curves and the
classification of covering constructions.
Current progress towards the classification of \Teichmuller curves
consists of results in three different flavors. 
\par
First, for low genus and Veech surfaces with a single 
zero, there are complete classification
results, using the geometry of prototypes. This applies to the 
classification of primitive \Teichmuller curves in genus two 
(\cite{mcmullenspin}) and in the Prym loci (\cite{LNWeierstrass}), 
see Section~\ref{sec:Prymloci} for their definition.
\par
Second, there are (in principle effective) finiteness results for algebraically
primitive \Teichmuller curves in the hyperelliptic components of
$\omoduli[g](g-1,g-1)$ (\cite{Mo08}) and in genus three (\cite{BHM14}). 
However, only for $\omoduli[2](1,1)$ there is a complete classification 
(\cite{mcmullentor}). In the other cases, the theoretical bounds
given by the proofs cannot be directly translated into feasible algorithms
and should be combined with techniques recently developed.
\par
The third group of results consists of non-effective finiteness 
theorems based on equidistribution results of Eskin, Mirzakhani
and Mohammadi (\cite{esmi} and \cite{esmimo}). These methods apply e.g.\
to the algebraically primitive case in prime genus with a single zero 
(\cite{MatWri13}).  The most general result in this direction was
proven recently by Eskin, Filip and Wright (\cite{EFW17}) who show
in every genus the finiteness of the number of \Teichmuller curves 
with trace field of degree greater than two and more generally 
finiteness outside
special affine invariant submanifolds. This is complemented by
the finiteness of \Teichmuller curves in non-arithmetic rank-one
orbit closures proven in \cite{LNW}.
Yet another route towards non-effective finiteness results is taken
by Hamenst\"adt (\cite{hamtypical}).
\par
\medskip
In this paper we examine two loci in genus three
that could, in the light of the results above, potentially contain
infinitely many primitive (but not algebraically primitive) \Teichmuller 
curves. The loci under consideration belong to strata with several
zeros, so that the torsion condition from \cite{moellerper} gives additional
constraints on the existence of such \Teichmuller curves.
The main purpose here is to show how to use this torsion condition
in the non-algebraically primitive case to prove effective finiteness
results.
\par
\begin{Thm}
\label{thm:main}
There is at most $92$ primitive \Teichmuller curves lying inside 
the Prym locus $\Prym[3](2,2)$ in $\omoduli[3]$.
The possible examples have trace fields $\QQ[\sqrt{2}]$, $\QQ[\sqrt{3}]$ or $\QQ[\sqrt{33}]$.
\end{Thm}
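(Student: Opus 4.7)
\bigskip
\noindent
\textbf{Proof proposal.}
The plan is to combine the real multiplication structure available on the Prym part with the torsion condition of \cite{moellerper} to obtain an effective bound first on the trace field, and then on the number of prototypes. Let $C \to \Prym[3](2,2)$ be a primitive \Teichmuller curve generated by $(X,\omega)$. Since $C$ is not algebraically primitive and lies in a Prym locus of dimension $4$ (and so the Prym part $P$ of $\Jac X$ has dimension $2$), the trace field $K$ must be a real quadratic field, and $P$ carries real multiplication by an order $\cO_D \subset K$ of discriminant $D$ for which $\omega$ is an eigenform. Let $P_1, P_2$ denote the two double zeros of $\omega$. In the stratum $\Prym[3](2,2)$ the Prym involution $\rho$ swaps $P_1$ and $P_2$ (otherwise $(X,\omega)$ would lie in $\Prym(4)$), so $P_1 - P_2$ defines a point of the Prym $P$.

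The first key step is to apply the torsion condition in the form adapted to the Prym setting: the relative period $P_1 - P_2 \in P$ is torsion of some finite order $n = n(D)$. I would use the parametrization of Prym \Teichmuller curves by prototypes introduced in the \Prym(2,2) setting (analogous to McMullen's prototypes for $\omoduli[2]$ and refined for Prym loci in \cite{LNWeierstrass}). A prototype is a tuple of integers, subject to a discriminant identity that expresses $D$ in terms of the tuple, and provides explicit coordinates for the flat surface and for the difference $P_1 - P_2$ on the universal family over $C$. The torsion condition then translates into a system of divisibility/congruence constraints on the prototype and on $D$.

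The technical heart, and the step I expect to be the main obstacle, is to turn these constraints into an upper bound on $D$. The plan is to combine two inputs: on the one hand, an Euler-characteristic / volume estimate for the image of $C$ in the Hilbert modular surface $X_D$ (or rather its Prym analog), which forces the degree of $C$ over $X_D$ to grow at least linearly in $D$; on the other hand, the torsion condition cuts out a proper subvariety of the universal family over $X_D$, whose class grows only polynomially in $D$, so that \Teichmuller curves lying in this torsion locus and in the stratum $\Prym[3](2,2)$ must satisfy an inequality that is violated once $D$ exceeds an explicit constant. Refining this with the non-square condition on $D$ (otherwise $C$ would be non-primitive) and with the parity/congruence constraints coming from the prototype should cut the admissible values of $D$ down to the three fundamental discriminants $8$, $12$ and $33$, i.e.\ trace fields $\QQ[\sqrt{2}]$, $\QQ[\sqrt{3}]$ and $\QQ[\sqrt{33}]$.

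Once the list of possible discriminants is finite and explicit, the final step is an enumeration. For each $D \in \{8,12,33\}$ one lists the finitely many prototypes of $\Prym[3](2,2)$ with that discriminant, and for each of them one checks the torsion condition directly on the explicit flat coordinates (reducing the question to a computation on an elliptic curve with complex multiplication by the relevant quadratic subfield of the prototype). Counting those prototypes for which the torsion condition holds, and accounting for the $\mathrm{Gal}(K/\QQ)$-action which identifies Galois-conjugate prototypes, is expected to yield at most $92$ candidate \Teichmuller curves, which is the bound stated in Theorem~\ref{thm:main}.
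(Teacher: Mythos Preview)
Your proposal misses the central mechanism of the paper and, as written, does not constitute a proof.

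\medskip
\textbf{The key missing idea: degeneration to a rational cusp.} The paper never works with the torsion condition globally over a Hilbert modular surface. Instead it pushes $(X,\omega)$ to the boundary along a carefully chosen (``suitable'') periodic direction so that the stable curve is an \emph{irreducible rational} nodal curve (Lemma~\ref{le:goodtopo22}). On this $\PP^1$ the stable differential is written down explicitly with poles at three pairs of points and double zeros at $\pm 1$. The torsion condition of \cite{moellerper} then becomes the concrete statement that two cross-ratios built from the pole locations are roots of unity (Equation~\eqref{eq:Prym22}). This is a linear relation of length~$8$ in roots of unity with coefficients in the real quadratic field $K$. The bound of Dvornicich--Zannier (Theorem~\ref{Thm:DvZa}) then yields a \emph{finite explicit list} of solutions, and reading off the corresponding $r=r_2/r_1$ gives the trace fields $\QQ[\sqrt{2}],\QQ[\sqrt{3}],\QQ[\sqrt{33}]$ directly (Theorem~\ref{thm:technical22}). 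No asymptotic comparison of growth rates in $D$ is used or needed.

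\medskip
\textbf{Why your approach has a gap.} Your plan to bound $D$ by playing a volume/Euler-characteristic lower bound for the degree of $C$ over $X_D$ against a polynomial upper bound coming from the torsion locus is only a sketch of a strategy; you do not explain how to get an \emph{effective} inequality, let alone one sharp enough to isolate exactly the discriminants $8,12,33$. In the algebraically primitive setting such arguments exist (e.g.\ \cite{bainmoel}), but here the Prym part is $2$-dimensional inside a $3$-dimensional Jacobian, and the torsion section lives in a semiabelian family over the cusp; making your proposed inequality precise in this setting is essentially the whole problem. The paper sidesteps this entirely via the roots-of-unity equation.

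\medskip
\textbf{Wrong enumeration step.} Once the trace fields are pinned down, the paper does \emph{not} enumerate prototypes for each $D$ and test torsion. It instead uses the solutions of Equation~\eqref{eq:Prym22} to determine the finitely many possible width/height/relative-period data of a suitable direction (Proposition~\ref{prop:irrallfinite}), computes the finitely many reduced intersection matrices between a pair of admissible transverse directions (Proposition~\ref{prop:intmatfinite} and Proposition~\ref{prop:redint22}), and then runs the Thurston--Veech construction: for each of the $8$ separatrix diagrams and $7$ reduced intersection matrices it enumerates the underlying square-tiled surfaces and converts them to candidate Veech surfaces. The count~$92$ is the output of this combinatorial search (Table~\ref{cap:algo}), not of a prototype-and-torsion check on an elliptic curve.

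\medskip
\textbf{A small factual slip.} Your claim that $\rho$ must swap $P_1,P_2$ ``otherwise $(X,\omega)$ would lie in $\Prym(4)$'' is false: there is a component $\Prym[3](2,2)^{\hyp}$ in which both double zeros are $\rho$-fixed (see Section~\ref{sec:Prymloci}). That component is handled separately (cf.\ Section~\ref{sec:Q11-16}), where such surfaces are shown to be imprimitive.
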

\par
In a stratum with one more zero, we push the argument to a
complete classification.
\par
\begin{Thm} \label{thm:main:2}
The Prym locus $\Prym[3](2,1,1)$ in $\omoduli[3]$ does not contain any
primitive \Teichmuller curve.
\end{Thm}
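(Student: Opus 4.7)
I argue by contradiction: suppose $(X,\omega)$ generates a primitive \Teichmuller curve $C \subset \Prym[3](2,1,1)$ with Prym involution $\tau$. A local computation using $\tau^{\ast}\omega=-\omega$ shows that $\omega$ must vanish to even order at each $\tau$-fixed point; hence $\tau$ fixes the double zero $P_1$ and exchanges the two simple zeros, $\tau(P_2)=P_3$. Setting $\pi\colon X\to Y:=X/\tau$, the curve $Y$ is elliptic and the Prym variety $P$ of $\pi$ has complex dimension $2$. Primitivity together with $\dim P=2$ force the trace field $K$ to satisfy $[K:\QQ]\le 2$. The arithmetic case $K=\QQ$ can be disposed of by enumerating primitive square-tiled surfaces in $\Prym[3](2,1,1)$: the Prym condition together with the $(2,1,1)$ signature reduces this to a short explicit list, each element of which is seen not to give rise to a primitive \Teichmuller curve. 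Thus $K$ is real quadratic and $\fraco_{K}$ acts by real multiplication on $P$.

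Next, I apply the torsion condition of \cite{moellerper}, in the form adapted to the non-algebraically primitive setting used in the proof of Theorem~\ref{thm:main}. In the present stratum this yields two independent constraints on the zeros of $\omega$. The $\tau$-anti-invariant class $[P_2-P_3]$ lies automatically in $P$ and must be a torsion point there, while the $\tau$-invariant combination $[2P_1-P_2-P_3]\in \pi^{\ast}\Jac(Y)$ projects to a torsion point of $Y$, equivalently $\pi(P_1)-\pi(P_2)\in Y_{\mathrm{tors}}$. By contrast, for $\Prym[3](2,2)$ only the second type of condition is available, since there both zeros are $\tau$-fixed; the additional $\tau$-anti-invariant torsion class present in $\Prym[3](2,1,1)$ is precisely the extra rigidity that one exploits to rule out any such \Teichmuller curve entirely.

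The main obstacle, and the step requiring the most care, is to convert this extra rigidity into a contradiction. The plan is to analyze cusps of $C$: at each cusp $(X,\omega)$ admits a completely periodic direction, and the Prym hypothesis constrains the cylinder decomposition to consist of $\tau$-invariant cylinders together with $\tau$-swapped pairs. The admissible combinatorial types form a finite list, obtained by enlarging the prototype classification used for $\Prym[3](2,2)$ in Theorem~\ref{thm:main} to account for splitting a double zero into the pair $P_2+P_3$. For each prototype one writes down the parabolic generator of the Veech group, computes its action on the torsion subgroups of $P$ and of $Y$, and shows that the two torsion constraints above cannot both be preserved under the parabolic monodromy unless $P_2=P_3$. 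Running this case analysis across the finite list of prototypes yields the desired contradiction, so $\Prym[3](2,1,1)$ contains no primitive \Teichmuller curve.
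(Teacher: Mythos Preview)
Your outline has a genuine gap at the key step. The torsion theorem of \cite{moellerper}, in the form applicable here, says only that $[z_i-z_j]$ is torsion in $\Jac(X)^{-}=\Jac(X)/\pi^\ast\Jac(Y)$. Your ``second constraint'' --- that the $\tau$-invariant class $[2P_1-P_2-P_3]$ projects to a torsion point of the elliptic curve $Y$ --- is not a consequence of this theorem. In fact any $\tau$-invariant degree-zero divisor is automatically $2$-torsion in $\Jac(X)^{-}$, so the $\tau$-invariant combination carries no information there, and you have provided no independent reason why its image in $\Jac(Y)$ should be torsion. Consequently the ``extra rigidity'' you invoke, and on which the entire case analysis of your final paragraph is predicated, is not actually available. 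Your comparison with $\Prym[3](2,2)$ is also off: in both strata the paper exploits the same torsion condition (in $\Jac(X)^{-}$), and the difference between the two outcomes is not a matter of having more torsion constraints in $\Prym[3](2,1,1)$.

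Even granting the single legitimate torsion condition, the sentence ``compute the parabolic action on the torsion subgroups of $P$ and of $Y$, and show that the two torsion constraints cannot both be preserved under the parabolic monodromy unless $P_2=P_3$'' is not a proof; it is a hope. The paper's actual mechanism is quite different and much more concrete. One degenerates along a suitable direction (a saddle connection from the double zero to a simple zero) to an irreducible rational nodal curve, writes the limiting stable differential explicitly as a rational $1$-form on $\PP^1$ with prescribed poles and residues, and then computes the relative period $\int_0^1\omega_\Theta$ in closed form. The torsion condition forces two explicit cross-ratios to be roots of unity, yielding a polynomial relation with coefficients in the quadratic trace field. This relation is then solved completely using the Dvornicich--Zannier bounds on linear relations in roots of unity, producing a short explicit table of admissible widths. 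From these one reconstructs the heights, the reduced intersection matrix of the horizontal and vertical cylinder decompositions, and finally eliminates the single surviving candidate by a direct combinatorial argument on the Thurston--Veech tiling. None of this machinery --- the stable-form computation, the roots-of-unity equation, or the intersection-matrix step --- appears in your outline.
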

\par
We  briefly  sketch  the  proof of the main theorems.  It
involves degeneration of surfaces to the boundary of the moduli space, 
and  then an analysis of the stable differentials. We show that the 
number of possible parameters of a periodic direction is finite. 
This last step towards finiteness is tackled using the Thurston-Veech's 
construction.
\par
In each step we complement the theoretical argument (implying the 
finiteness claim) by a way to implement this step in practice.
\par
We suspect that the stratum $\Prym[3](2,2)$ does not contain any
primitive \Teichmuller curve either. One way to prove this would be
to build the flat surfaces given by Thurston-Veech construction 
with the explicit tiles given in Section~\ref{sec:model22} and
tediously check that in a transverse direction the ratios of moduli are
not commensurable. This last check has been done successfully for one
of the eight topological models.
\par
\subsection*{Acknowledgments.} Computational assistance  was provided
by SAGE and a program written by Alex Eskin for computing cylinder 
decompositions. The authors are indebted to the programmers 
for the help provided. The first author is partially 
supported by the Labex Persyval and the Project GeoSpec.
The second author is partially supported by the DFG-project
``Classification of Teichm\"uller curves''.

\section{Prym loci and eigenforms}

\subsection{Prym loci} \label{sec:Prymloci}

In this context, a double covering $\pi:X \to Y$ of two Riemann surfaces is
called a {\em Prym covering}, if $g(X)-g(Y) =2$.  We refer to
the involution $\rho$ of $X$ with $Y = X/\langle \rho \rangle$ as the
{\em Prym involution}. This terminology is
taken from \cite{mcmullenprym} and differs from the classical terminology, 
where the polarization on the Prym variety ${\rm Prym}(X,\rho) = \Jac(X)/\pi^* \Jac(Y)$ induced from 
the principal polarization on $\Jac(X)$ had to be a multiple of a principal
polarization, but otherwise no genus restriction was imposed. 
\par
The {\em Prym locus} $\Prym[g](\kappa)$ is defined for any partition
$\kappa$ of $2g-2$ to be the set of flat surfaces $(X,\omega)$ 
such that $g(X)=g$, such that $X$ admits a Prym involution and
such that the zeros of $\omega$ are of type $\kappa$. For any $(X,\omega)$
in a Prym locus, the quadratic differential $q= \omega^2$ is $\rho$-invariant, 
hence a pull-back of from $Y$. Consequently, there is an 
$\SL_2(\RR)$-invariant isomorphism between the Prym loci and
strata of quadratic differentials on $Y$.
\par
Note that a zero of odd order is never fixed by the
Prym involution, but zeros of even order may be fixed or interchanged.
Consequently, there are two strata with $\kappa = (2,2)$.
If the two zeros are fixed, the Prym locus is isomorphic to
$Q(1,1,-1,-1)$ and part of the hyperelliptic component
of $\omoduli[3](2,2)^\hyp$. If the two zeros are interchanged, the
stratum Prym locus is isomorphic to $Q(4,-1^4)$ and part of
$\omoduli[3](2,2)^\odd$. Consequently, we denote these two
components by $\Prym[3](2,2)^\hyp$ and $\Prym[3](2,2)^\odd$ respectively.
Note that these two components are not of the same dimension.
\par
We give the full list of Prym loci in the case $g=3$ we are most
interested in. These are $\Prym[3](4)$ isomorphic to $Q(3,-1^3)$, 
the loci $\Prym[3](2,2)^\hyp$ and  $\Prym[3](2,2)^\odd$ mentioned above, 
the locus $\Prym[3](2,1,1)$ isomorphic to $Q(2,1,-1^3)$ and the
locus $\Prym[3](1,1,1,1)$ isomorphic to $Q(2^2,-1^4)$.
These five Prym loci in genus three are in fact connected, see
\cite{LNCompletePer} for the classification of 
connected components of Prym loci.

\subsection{Prym eigenform loci}

The main observation of \cite{mcmullenprym} was that the 
{\em Prym eigenform loci}, defined as intersection with 
the real multiplication locus
$$\Omega\cEE_D(\kappa)\, = \, \{(X,\omega) \in \Prym[g](\kappa)\, : \, 
{\rm Prym}(X,\rho) \, \text{has real multiplication by}\, \fraco_D \} $$
are $\SL_2(\RR)$-invariant for any discriminant~$D$ and a partition~$\kappa$ 
of $2g-2$ with $g \in \{2,3,4,5\}$.
\par
\Teichmuller curves in Prym loci are necessarily contained in the
Prym eigenform loci by \cite{moeller06}. We summarize when they coincide
and the current state of knowledge about these interesting curves.
\par
The intersection $\Prym[g](2g-2)$ 
with the minimal stratum consists of a union of \Teichmuller curves
for $g=2,3,4$ and this intersection is empty otherwise (\cite{mcmullenprym}). 
The connected components have been classified for $g=2$ in \cite{mcmullenspin} 
and in \cite{LNWeierstrass,LNWeierstrass2} for $g=3,4$. The topology of these curves 
is quite well-known. The Euler characteristic has been determined
in \cite{bainbridge07} for $g=2$ and in \cite{LNGLorbits} for $g=3,4$. The number of
cusps is calculated for $g=2$ in \cite{mcmullenspin} 
and in \cite{LNWeierstrass} for $g=3,4$. 
The elliptic elements are known for $g=2$ by work of
Mukamel (\cite{Mukamel}) and for $g=3,4$ by work of
Torres and Zachhuber (\cite{TZg3}, \cite{TZg4}).
\par
The eigenform loci $\Omega\cEE_D(2,2)$ are also two-dimensional, hence union
of \Teichmuller curves. Almost all of the obvious topological questions on these
curves are currently open.
\par
The situation is rather different in the cases when the dimension of the
eigenform locus in $\Prym[g](\kappa)$ is three or larger. The components
of the loci $\Prym[3](2,2)^\odd$ and $\Prym[3](2,1,1)$, which are of particular interest in this
paper, have been classified in \cite{LNEigComponents}. 
The same questions
(connected components, classification of \Teichmuller curves) naturally also
arise in $\Prym[3](1,1,1,1)$ and also in genus $4$ and $5$. They are currently 
open, mainly since the number of configurations of cylinders grows 
drastically with the dimension of the loci.
\par
Note that all surfaces in the eigenform loci that are given by 
the Thurston-Veech construction (see \cite[Section~4]{mcmullenprym} for
a concise survey) are completely algebraically periodic in the
sense of \cite{CaltaSmillie}. Hence all these surfaces have zero flux
and we cannot use the flux to rule out the existence of \Teichmuller
curves. Results on the more interesting property of
complete periodicity in Prym loci can be found in \cite{LNCompletePer}. 
However, this property is not relevant here either.

\section{Suitable degenerations}
\label{sec:degenerations}

It has been a recurrent theme in \cite{mcmullentor}, \cite{Mo08} , \cite{bainmoel}
that the constraints imposed by the torsion condition
of \cite{moellerper} are best expressible on a the stable curves 
over cusps of \Teichmuller curves. This applies in particular, 
if these stable curves are irreducible and rational curves with nodes.
\par
For primitive, but not algebraically primitive \Teichmuller curves
stable curves that are non-rational may appear at cusps. However, 
for the Prym loci under consideration we can easily find suitable cusps.
\par
The torsion condition translates into an equation in roots of unity
for any suitable cusp. In this section we derive this equation in roots of unity
both for $\Prym[3](2,2)$ and $\Prym[3](2,1,1)$.

\subsection{Stable curves,  stable differentials and the torsion condition}

Let $\barmoduli[g]$ denote the Deligne-Mumford compactification of the moduli space of
curves. A point in the boundary of $\barmoduli[g]$ is a {\em stable curve} $Y$, i.e.\  
a smooth curve $\widetilde{Y}$ with certain pairs of points $(x_i,y_i)$ identified to form 
nodes such that the automorphism group of $Y$ has finite order.
A {\em stable form} $\eta$ on $Y$ is a holomorphic 1-form on $\widetilde{Y} \, \setminus \, 
\bigcup_{i=1}^n (x_i,y_i)$ 
with at worst simple poles of opposite residues at each pair of points $(x_i,y_i)$.
\par
Given translation surface $(X,\omega)$ and a completely periodic direction $\Theta$, one can 
use the diagonal matrices $\left( \begin{smallmatrix} 1 & 0 \\ 0 & s \end{smallmatrix} \right)$ to 
push $(X,\omega)$ to the boundary of $\moduli[g]$. The limit point corresponds 
to a cusp of the Teichm\"uller curve $\overline{C} \setminus C$. Topologically the stable surface is obtained 
by collapsing the core curves of the cylinders to points, i.e.\ each cylinder gives
rise to a node and the irreducible components of the surface are obtained by cutting along the core curves. 
\par
This description also makes evident that the projective tuple of residues of $\eta$ equals
the tuple of widths of the cylinders.
\par
We call a direction $\Theta$ on a Veech surface and the corresponding cusp of the 
\Teichmuller curve {\em irreducible} if the stable curve is irreducible. We call
$\Theta$ and the cusp {\em suitable} if the stable curve is moreover a rational curve.
\par
We recall here the torsion condition that we want to exploit in our setting.
Suppose $(X,\omega)$ is a Veech surface and that $\omega$ has zeros $z_1,\ldots, z_m$.
Let $\Jac(X)^{-} = \Jac(X)/\Jac(X/\rho)$ be the $\rho$-anti-invariant part of the 
Jacobian of $X$.
\par
\begin{Thm}[\cite{moellerper}]
For any $i,j$  the divisor $[z_i-z_j]$ has finite order in $\Jac(X)^{-}$.
\end{Thm}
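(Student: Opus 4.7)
The plan is to adapt the strategy of the original torsion theorem of \cite{moellerper} to the Prym setting, working not with the single surface $(X,\omega)$ but with the entire \Teichmuller curve $C$ that it generates. The first step is to pass to a finite \'etale cover $\tilde C \to C$ on which the zeros of $\omega$ can be labelled globally as sections $z_1,\dots,z_m : \tilde C \to \mathcal{X}$ of the universal family $\mathcal{X} \to \tilde C$, and to assemble their differences into a section $s_{ij}$ of the relative Prym $\mathcal{P}^- \to \tilde C$. The fiber of $s_{ij}$ at the point corresponding to $(X,\omega)$ is exactly the class $[z_i-z_j] \in \Jac(X)^-$ whose order we want to bound, so it suffices to show that $s_{ij}$ is a torsion section of $\mathcal{P}^-$.

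Next, exploit the very restrictive variation of Hodge structure on $\tilde C$. The anti-invariant local system $\VV^- \subset R^1\pi_*\CC$ carries a natural action of the trace field $K$ induced by the Prym eigenform structure; after tensoring with $\RR$, it decomposes as a direct sum $\cL \oplus \cL'$ of rank-two summands indexed by the two embeddings $K \hookrightarrow \RR$. The summand $\cL$ is \emph{tautological}, its Hodge line being generated by the relative form $\omega$ and its Kodaira--Spencer map being the uniformising period map of $\tilde C$. The Galois-conjugate summand $\cL'$ is \emph{unitary}: its Hodge filtration is a flat sub-bundle, and equivalently the corresponding factor of $\mathcal{P}^-$ is isotrivial up to isogeny.

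The third step is to see that $s_{ij}$ becomes torsion on each summand. The projection to $\cL$ is governed by a normal function whose infinitesimal variation pairs a Kodaira--Spencer tangent vector with the values of $\omega$ at the marked points $z_i$ and $z_j$; but these values both vanish, by the very definition of a zero of $\omega$. Hence $s_{ij}|_{\cL}$ is flat, and a flat holomorphic section of an abelian scheme over the complete curve $\overline{\tilde C}$ has finite order. The projection to $\cL'$ lands in an abelian scheme that is isotrivial up to isogeny, so by the standard rigidity of normal functions into such families it is also torsion. Combining the two factors shows that $s_{ij}$ is a torsion section of $\mathcal{P}^-$, and specialising at the point of $\tilde C$ corresponding to $(X,\omega)$ yields the claim.

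The main obstacle is the passage from \emph{flat infinitesimal variation} to \emph{actual torsion of the section} on the unitary summand $\cL'$: the section could a priori move by a locally constant but non-torsion class. Ruling this out requires a Manin-kernel-type rigidity statement for normal functions into unitary variations of Hodge structure, and this is precisely the technical core of \cite{moellerper}, handled there via Deligne's semisimplicity theorem combined with an infinitesimal Torelli argument. A complete proof would either quote or recapitulate that input.
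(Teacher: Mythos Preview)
The paper does not give a proof of this statement at all: it is stated with the attribution \cite{moellerper} and then used as a black box in Section~\ref{sec:degenerations}. There is therefore nothing in the paper to compare your proposal against.

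That said, your sketch is a faithful outline of the argument in the cited reference, correctly specialised to the Prym setting. The decomposition of the anti-invariant local system into the tautological rank-two piece (carrying $\omega$) and its Galois conjugate, the vanishing of the infinitesimal normal function on the tautological piece because $\omega(z_i)=\omega(z_j)=0$, and the appeal to rigidity on the unitary piece are exactly the ingredients of M\"oller's proof. You are also right to flag the unitary summand as the delicate step and to note that a full argument must invoke the Manin-kernel-type input from \cite{moellerper}; this is honest and accurate. One small imprecision: the phrase ``flat holomorphic section of an abelian scheme over the complete curve $\overline{\tilde C}$'' glosses over the fact that \Teichmuller curves are not compact, so one really uses that the section extends across the cusps together with a Mordell--Weil finiteness statement rather than compactness alone. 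With that caveat, your proposal is correct and already more detailed than what the present paper offers.
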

\par

\subsection{Suitable cusps in the case $\Prym[3](2,1,1)$}

We first show that directions of a saddle connection joining
the double zero to a simple zero are suitable for primitive \Teichmuller curves in this locus.
\par
\begin{Lemma}
\label{le:goodtopo211}
Let $(X,\omega) \in \Prym[3](2,1,1)$ be a Veech surface generating 
a primitive \Teichmuller curve. If $\Theta$ is the direction of a 
saddle connection joining the double zero to a simple zero, then the 
stable curve is an irreducible rational 
curve of geometric genus zero, i.e.\ the direction is suitable.
\par
The cylinder decomposition in the direction $\Theta$ consists of three 
cylinders, one fixed by $\rho$ and a pair that is exchanged by $\rho$.
\end{Lemma}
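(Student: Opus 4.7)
The plan is to analyze the horizontal cylinder decomposition of $X$ using $\rho$-equivariance and a counting of fixed points.

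By the Veech dichotomy, the direction $\Theta$ is completely periodic, so $X$ decomposes into $n$ horizontal cylinders bounded by $|S| = 2g - 2 + s = 7$ horizontal saddle connections. The Prym involution $\rho$ preserves $\Theta$ as an unoriented direction and permutes the cylinders and saddle connections. A fixed cylinder carries $\rho$ as a half-turn with two fixed points on its core curve, and a fixed saddle connection has its midpoint fixed by $\rho$; in both cases these interior fixed points must be regular Weierstrass points. Moreover, the endpoints of a fixed saddle connection form a $\rho$-orbit of size at most two among the zeros, so a fixed saddle connection is either a loop at $p_0$ or a segment from $p_1$ to $p_2$. Using that $X$ has exactly three regular Weierstrass points yields $2f_c + f_s = 3$, and combined with $f_s + 2p_s = 7$ this leaves only $(f_c, f_s) \in \{(1,1), (0,3)\}$, where $f_c$, $f_s$, $p_s$ count fixed cylinders, fixed saddle connections, and $\rho$-pairs of saddle connections respectively.

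The given saddle connection $\sigma$ from $p_0$ to $p_1$ cannot be fixed, since $\rho(\sigma)$ joins $p_0$ to $p_2$; hence $\{\sigma, \rho(\sigma)\}$ is a $\rho$-pair emanating from $p_0$ in opposite horizontal directions. One then studies the cylinders incident to $\sigma$ and $\rho(\sigma)$: by the half-turn behaviour of $\rho$ near the pair, the cylinder bordering $\sigma$ on one side is swapped by $\rho$ with the cylinder bordering $\rho(\sigma)$ on the opposite side, and among the possible $\rho$-equivariant arrangements of the six horizontal prongs at the cone point $p_0$ of angle $6\pi$ the only one compatible with the above counting is the one where a $\rho$-invariant cylinder $C_0$ has both $\sigma$ and $\rho(\sigma)$ on the same side of its boundary. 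This rules out the case $(f_c, f_s) = (0, 3)$ and identifies the unique fixed cylinder.

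The complement $X \setminus \overline{C_0}$ is then $\rho$-invariant, contains the remaining zeros $p_1, p_2$, and is bounded by a single $\rho$-pair of boundary cycles. A further analysis of the prong counts at $p_1, p_2$ (of cone angle $4\pi$ each), together with the primitivity of the generated \Teichmuller curve to exclude disconnecting configurations, forces the complement to consist of exactly one $\rho$-pair of cylinders and to be connected. In particular $n = 3$, and cutting $X$ along the three core curves yields a connected surface with $2n=6$ boundary circles; the topological Euler characteristic count $-4 = 2 - 2h - 2n$ then gives $h = 0$, so collapsing the three core curves to nodes produces an irreducible rational stable curve of geometric genus zero, i.e. the direction is suitable.

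The hardest step is the local prong analysis at $p_0$, $p_1$, $p_2$ that produces the fixed cylinder $C_0$ and pins down the number of cylinders in its complement: it requires a delicate case distinction among the $\rho$-invariant arrangements of the horizontal prongs, and it is here that the assumption of primitivity enters to exclude splittings that would realise the \Teichmuller curve as a covering.
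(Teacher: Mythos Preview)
Your fixed-point bookkeeping ($2f_c+f_s=3$ and $f_s+2p_s=7$) is correct, but from that point on the argument is not actually carried out: the ``local prong analysis at $p_0,p_1,p_2$'' that is supposed to exclude $(f_c,f_s)=(0,3)$, produce the fixed cylinder~$C_0$, and then force exactly one $\rho$-pair of cylinders in the complement is only announced, not performed. You yourself flag it as ``the hardest step'' requiring ``a delicate case distinction''. As written, nothing rules out $(0,3)$, nothing bounds the total number of cylinders, and nothing proves connectedness of the complement of the core curves. This is a genuine gap, not a routine omission.

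The paper sidesteps all of this by proving irreducibility \emph{first}, which is almost free. The given saddle connection lies in the direction~$\Theta$ and is therefore disjoint from every core curve, so after degeneration the double zero $p_0$ and one simple zero lie on the same irreducible component of the stable curve~$Y$. That component is $\rho$-invariant because $p_0$ is $\rho$-fixed; since $\rho$ swaps the two simple zeros, the other simple zero lies there too. Every component of $\tilde{Y}$ contains a zero (each half-cylinder is bounded by a cycle of saddle connections through zeros), so $Y$ is irreducible. Now the cylinder count drops out: an irreducible stable curve of arithmetic genus~$3$ has at most three nodes, hence $n\le 3$; primitivity forces the cylinder widths to have $\QQ$-rank at least two, which eliminates $n=1$, and for $n=2$ either both cylinders are fixed (impossible since $2f_c\le 3$) or they are swapped (equal widths). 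Thus $n=3$, one fixed and a swapped pair, and $g(\tilde{Y})=3-3=0$. Your prong analysis is attempting to recover, locally and laboriously, global information that the zeros-on-one-component observation delivers in two lines.
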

\par
\begin{Conv}
\label{conv:prym211}
{\rm We will always label the cylinders of a suitable direction such that 
$C_2$ is fixed by the Prym involution $\rho$, while the cylinders $C_1$ 
and $C_3$ are permuted.
\par
\noindent 
The widths and heights of the cylinders are denoted by $w_i = w(C_i)$ 
and $h_i = h(C_i)$.}
\end{Conv}
\par
\begin{proof} [Proof of Lemma~\ref{le:goodtopo211}]
Let $\Theta$ be a direction $\Theta$ on $(X,\omega)$ with a saddle connection joining
the double zero to a simple zero. Let $Y$ be the stable
surface obtained by degenerating using the geodesic flow in the
direction $\Theta$. One component of the stable curve thus contains
the double zero, a simple zero. This component is fixed by the Prym involution since
the double zero is fixed and hence it also contains the other simple zero. 
Consequently, $Y$ is an irreducible curve. Since $g(X) =3$, the curve $Y$ has at most three cusps
and the direction $\Theta$ at most three cylinders. Since $(X,\omega)$ is primitive the
widths of the core curve of the cylinders have $\QQ$-rank at least two. If two cylinders
were fixed, we would obtain a contradiction to the double zero being fixed. Hence
one cylinder is not fixed and this direction has precisely three cylinders. 
\par
This determines the genus of the normalization
of $Y$. It is isomorphic to a rational curve $\PP^1$ 
with three pairs of points that are identified.
\end{proof}
\par
Next, we express the stable form of a suitable direction algebraically.
We use $z$ as a coordinate on $\PP^1 \simeq \widetilde{Y}$. The Prym involution $\rho$
is preserved along the degeneration process, and induces an involution, still 
denoted by $\rho$, on $\PP^1$. We use the triple transitivity of M\"obius 
transformations to make the Prym involution 
$\rho$ on $\PP^1$ to be $z \mapsto -z$. Since the direction $\Theta$ decomposes 
the surface into three cylinders, it turns out that $\omega_\Theta$ has one 
double zero, two simple poles and three pairs of poles (with opposite residues). 
>From the action of $\rho$ on cylinders and 
zeros we deduce that one pair of poles is fixed while the other pairs are exchanged. 
Hence the pairs of identified points are $(u,-u)$, $(x,y)$ and $(-x,-y)$. 
The stable differential $\omega_\Theta$ is therefore given by,
\begin{equation} \label{eq:omegaasfraction}
\omega_\Theta \= C\cdot\frac{z^2(z^2-1)}{(z^2-x^2)(z^2-y^2)(z^2-u^2)}dz
\end{equation}
for some nonzero complex scalar $C$.
\par
Observe that Convention~\ref{conv:prym211} implies that the residue
of $\omega_\Theta$ at $x$ is $r_1$, 
and the residue at $u$ is $w_2$.  We can always rescale such that $r_1, r_2$ 
form a basis of a real quadratic field 
$K$. Stability of  $\omega_\Theta$ implies that the residue around $y$ is $-w_1$, 
namely
\begin{equation}
\label{eq:residue}
 \omega_\Theta \= r_1(\frac{1}{z-x} - \frac{1}{z-y})dz + r_2(\frac{1}{z-u} - 
\frac{1}{z+u})dz -  r_1(\frac{1}{z+x} - \frac{1}{z+y})dz.
\end{equation}
Hence using coefficient comparison between equations~\eqref{eq:residue} 
and~\eqref{eq:omegaasfraction},
we get the {\em opposite residue equations}
\begin{equation} \label{eq:oreqPrym211}
\begin{aligned}
0 & \= r_1u(y-x) +r_2xy \\ 
0 & \= r_1(yx^2 + (-y^2 + (-u^2 + 1))x + (u^2 - 1)y) - r_2u(x^2 + y^2 -1)\, ,
\end{aligned}
\end{equation}
where the first one is obtained from the constant term and the second one is 
obtained from the comparison of the $z^4$-term and $z^2$-terms. Note that 
the terms for odd powers of $z$ are automatically zero.
\par
\medskip
Finally, we express the torsion condition in these coordinates. The universal 
cover of $\Jac(X)^-$ is $\CC^2$, generated by $\omega$ and $\omega^\sigma$. 
The zeros $z_i$
of $\omega$ give rise to sections $Z_i$ of the universal family of curves over 
the \Teichmuller curve $C$. If $z_i - z_j$ is torsion for all fibers over $C$, then
the same holds for the corresponding Jacobian over the cusp. In the case of
a suitable degeneration this is the semiabelian variety
$$\CC^2/\langle{\rm Per}(\gamma_x), {\rm Per}(\gamma_u) \rangle 
\,\cong\, (\CC^*)^2, $$ 
where $\gamma_x$ and $\gamma_u$ are the loops around $x$ and $u$ respectively
and where ${\rm Per}$ denotes the vector of periods with respect to $\omega$ 
and $\omega^\sigma$.
The definition of the stable form in~\eqref{eq:residue} implies that 
the vector space of periods is
$$ {\rm Per}(\gamma_x) \= \ZZ\cdot\left(\begin{matrix} 2\pi I\cdot r_1
\\ 2\pi I\cdot r_1^\sigma \end{matrix} \right), \quad
{\rm Per}(\gamma_x) \= \ZZ\cdot\left(\begin{matrix} 2\pi I\cdot r_2
\\ 2\pi I\cdot r_2^\sigma \end{matrix} \right).
$$
On the other hand we calculate that the divisor given as the 
difference between the double zero a simple zero is the vector of
relative periods
\begin{equation} \label{eq:relper}
 \int_0^1 \omega_\Theta \= r_1 \log\left(\frac{(1-x)(1+y)}{(1+x)(1-y)}\right)
+ r_2 \log \left(\frac{1-u}{1+u}\right)
\end{equation} 
up to the contribution of closed paths, 
and similarly for $\omega^\sigma_\Theta$, replacing $r_i$ by $r_i^\sigma$.
\par
The torsion condition now amounts to 
\begin{equation} \label{eq:intvector}
\left(\begin{matrix} \int_0^1 \omega_\Theta \\
 \int_0^1 \omega^\sigma_\Theta \sigma 
\end{matrix} \right)
\,\in\, \QQ \cdot \left(\begin{matrix} 2\pi I\cdot r_1
\\ 2\pi I\cdot r_1^\sigma \end{matrix} \right) + \QQ \cdot 
\left(\begin{matrix} 2\pi I\cdot r_2
\\ 2\pi I\cdot r_2^\sigma \end{matrix} \right)
\end{equation}
\par
This is possible only if both logarithms in equation~\eqref{eq:relper}
lie in $ 2\pi I\cdot \QQ$. It will be convenient to make the invertible
change of variables
$$ X \= \frac{1-x}{1+x}, \quad Y \= \frac{1-y}{1+y}, 
\quad U \= \frac{1-u}{1+u}.$$
With this notation, the torsion condition amounts the existence of 
two roots of unity $\zeta_{XY}$ and $\zeta_U$, such that
\begin{equation} \label{eq:zetas}
Y/X \= \zeta_{XY} \quad \text{and} \quad U \= \zeta_U. 
\end{equation}
\par
We plug~\eqref{eq:zetas} solved for $Y$ in the first opposite residue 
equation~\eqref{eq:oreqPrym211} and clearing denominators the get the
first equation in~\eqref{eq:substcoeffroots}. We perform the same replacement
with the second opposite residue equation and subtract a suitable multiple
of the first equation to reduce the degree in $X$ from $4$ to three. The 
resulting equations are
\begin{equation} \label{eq:substcoeffroots}
\begin{aligned}
0 & \=  r_2(\zeta_U+1)\zeta_{XY}\, X^2  \\
& \+  (2 r_1 (\zeta_{XY}-1) (\zeta_U-1) \m r_2 (1 + \zeta_{XY}) (1 + \zeta_U))\, X  \\
& \+  r_2(\zeta_U+1) \quad\quad\quad\quad\quad \text{and} \\ 
0 & \=  \zeta_{XY}(16 \zeta_U  (\zeta_{XY} - 1) r_1 
\m  (\zeta^2_U - 1) (\zeta_{XY} + 1)r_2  )\, X^3  \\
& \+ (\zeta_U-1)(2(\zeta_U - 1) (\zeta_{XY}^2 - 1) r_1 
\m (\zeta_U + 1) (\zeta_{XY}^2 - 13\zeta_{XY} + 1)r_2)\,  X^2  \\
& \+ 2((\zeta_{XY} - 1) (\zeta_U^2 + 6\zeta_U + 1)r_1  
\m (\zeta^2_U - 1)(\zeta_{XY} + 1))X \\
& \+ r_2(\zeta_U^2 - 1).
\end{aligned}
\end{equation}
It turns out that the resultant of the two above equations
with respect to $X$, divided by the factor $256 (\zeta_U+1)\zeta_{XY}r_2$, is
 a square of the following equation 
\begin{equation} \label{eq:torsconseq211}
\begin{aligned}
0 &\= \r^2\zeta_{XY}\zeta_U^3 \m\r^2\zeta_{XY} \+ (4 -\r^2)\zeta_{XY}\zeta_U \+ 
(\r^2 -4)\zeta_{XY}\zeta_U^2  \\
& \+ (2 -\r)\zeta_{XY}^2\zeta_U^2 \+ (\r+2)\zeta_U^2 \m (\r+2) \zeta_{XY}^2\zeta_U 
\+ (\r -2) \zeta_U.
\end{aligned}
\end{equation}
With the application in the next section in mind, 
we have group the individual summands as powers of $\zeta_{XY}$ and $\zeta_U$. 
and we have scaled the projective tuple $(r_1,r_2)$ to $r_1 = 1$ and $r_2=r$ to improve 
readability.

\subsection{Suitable cusps in the case $\Prym[3](2,2)$}

With a proof similar to that of Lemma~\ref{le:goodtopo211} we obtain the following criterion
for suitable directions. 
\par
\begin{Lemma}
\label{le:goodtopo22}
Let $(X,\omega) \in \Prym[3](2,2)$ be a Veech surface generating a primitive \Teichmuller curve.
If $\Theta$ is the direction of a saddle connection joining
the double zeros  to a fixed point of the Prym involution, 
then the stable curve is an irreducible rational curve of geometric genus zero,
i.e.\ the direction is suitable.
\par
The cylinder decomposition in the direction $\Theta$ consists of 
three cylinders, one fixed by $\rho$ and a pair that is exchanged by $\rho$.
\end{Lemma}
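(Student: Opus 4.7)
The argument follows the structure of the proof of Lemma~\ref{le:goodtopo211}, with adjustments for the different zero pattern of $\Prym[3](2,2)$. Let $Y$ be the stable curve obtained by applying the \Teichmuller geodesic flow in the direction $\Theta$ to $(X,\omega)$ and contracting the core curves of the cylinder decomposition; let $\eta$ denote the corresponding limit stable differential. The Prym involution $\rho$ is preserved by the flow, so it extends to an involution of $Y$.

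The first step is to establish irreducibility of $Y$. The irreducible component of $Y$ on which the distinguished saddle connection of $\Theta$ degenerates carries a fixed point of $\rho$, namely the prescribed Prym-fixed point contained in it. Because $\rho$ fixes a point on this component, the component itself is $\rho$-invariant. It also carries one of the two double zeros of $\omega$, and $\rho$-invariance then forces the other double zero to lie on the same component, regardless of whether $\rho$ fixes each zero individually or exchanges the two. Hence every zero of $\omega$ lies on a single irreducible component, so $Y$ is irreducible.

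The second step is the cylinder count. Since $g(X) = 3$, the cylinder decomposition of $(X,\omega)$ in direction $\Theta$ contains at most three cylinders, with equality precisely when $Y$ has three nodes. Primitivity of $(X,\omega)$ together with the inclusion into $\Omega\cEE_D(2,2)$ implies that the trace field is a real quadratic field, so the widths of the cylinders span a $\QQ$-vector space of rank two. As in the proof of Lemma~\ref{le:goodtopo211}, combining this rank constraint with the way $\rho$ permutes cylinders rules out the cases of one or two cylinders; the key point is that if two cylinders were simultaneously $\rho$-fixed then the prescribed saddle connection passing through a $\rho$-fixed point could not exist. The decomposition must therefore have exactly three cylinders, and any order-two permutation of a three-element set fixes one element and swaps the other two, yielding the stated symmetry type.

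Putting the two steps together, $Y$ is irreducible of arithmetic genus $3$ with three nodes, so its normalization has geometric genus $3-3 = 0$ and is isomorphic to $\PP^1$ with three pairs of identified points; in particular $\Theta$ is suitable. The only delicate point in adapting the previous argument is the cylinder count: one has to couple the rank of the widths with the position of the distinguished saddle connection relative to the $\rho$-fixed point to exclude one- and two-cylinder configurations, exactly as in the $\Prym[3](2,1,1)$ case, but with the two double zeros now playing a more symmetric role.
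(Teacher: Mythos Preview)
Your argument follows the paper's approach and is essentially correct. The one place where you should be more explicit is your assertion that two simultaneously $\rho$-fixed cylinders are incompatible with a saddle connection through a Prym fixed point: the paper's justification is that each $\rho$-invariant cylinder carries exactly two fixed points of $\rho$ in its interior, so two such cylinders already absorb all four fixed points of the Prym involution and leave none on the boundary; the same Riemann--Hurwitz count also excludes the possibility that $\rho$ fixes all three cylinders (which would force six interior fixed points), a case your phrase ``any order-two permutation of a three-element set'' does not by itself rule out.
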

\par
The saddle connection in the statement of the lemma is joining
the two double zeros, with a fixed point at its midpoint.
\par
\begin{proof}
In comparison with the proof of Lemma~\ref{le:goodtopo211} we
only have to rule out that the direction $\Theta$ has two cylinders, 
both fixed by $\rho$. Since then all four fixed points are contained
in the cylinders this contradicts the hypothesis on the saddle connection.
\end{proof}
\par
Consequently, we may stick to the labelling of cylinders as in Convention~\ref{conv:prym211} here, too.
We also continue to use $\rho$ to be $z \mapsto -z$ on the normalization $\widetilde{Y} \cong \PP^1$
of any suitable cusp. However, now both zeros are fixed by the Prym involution, so that
$$
\omega_\Theta \= C \cdot \frac{(z-1)^2(z+1)^2}{(z^2-x^2)(z^2-y^2)(z^2-u^2)}\, dz.
$$
Comparing with
$$
\omega_\Theta \= r_1(\frac{1}{z-x} - \frac{1}{z-y})dz + r_2(\frac{1}{z-u} - 
\frac{1}{z+u})dz -  r_1(\frac{1}{z+x} - \frac{1}{z+y})dz.
$$
we obtain the {\em opposite residue equations}
\begin{equation} \label{eq:oreq22}
\begin{aligned} 
0 & \= (x - y) (xyu^2 + 1)r_1 - u  (xy - 1) (xy + 1)r_2\, , \\
0 & \= (y-x )  (-xy + u^2 - 2)r_1 - u (x^2 + y^2 - 2)r_2. 
\end{aligned}
\end{equation}
\par
\medskip
The relative period is now given as the integral between the two zeros of $\omega$
\begin{equation} \label{eq:relper22}
 \int_{-1}^1 \omega_\Theta \= 2r_1 \log\left(\frac{(1-x)(1+y)}{(1+x)(1-y)}\right)
+ 2r_2 \log \left(\frac{1-u}{1+u}\right)
\end{equation} 
(up to the contribution of closed paths).
The torsion condition implies the existence 
of two roots of unity $\zeta_{XY}$ and $\zeta_{U}$ such that
$$
\zeta_U=U \qquad \textrm{and} \qquad  \zeta_{XY} = Y/X, \textrm{ where } U=\frac{1-u}{1+u}, \,
X = \frac{1-x}{1+x},\
Y = \frac{1-y}{1+y}.    
$$
Similarly to the situation in $\Prym[3](2,1,1)$ we plug this (solved for $X$) into the opposite residue
equations and clear denominators.  We obtain two expression of degree two in $X$. A suitable linear
combination of the two is $64X$ times the following relation, that we again specialized to $r_1 =1$, and $r_2=r$.
\begin{flalign}
\label{eq:Prym22}
&r(r-1) \zeta_{XY}\ +\ r(r-1)\zeta_{XY}^2\zeta_{U}^4\ -\ 2(r^2-1)\zeta_{XY}\zeta_{U}^2\ -\ 2(r^2-1)\zeta_{XY}^2\zeta_{U}^2& \nonumber\\
 &+\ r(r+1)\zeta_{XY}^2\ +\ r(r+1)\zeta_{XY}\zeta_{U}^4\ -\ 2\zeta_{XY}^3\zeta_{U}^2\ -\ 2\zeta_{U}^2 \= 0.
\end{flalign}
\subsection{Degenerate cases of Equation~\eqref{eq:torsconseq211}}
\label{rq:trivial:solution}
We analyze degenerate cases of Equation~\eqref{eq:torsconseq211} in order to eliminate
them from the discussion in the next section.  Solutions with $\zeta_{XY} =1$ or $\zeta_U=1$ are of no interest
for us since then $x=y$ (respectively, $u=0$) which would
imply that two poles (respectively a pole and a zero) of the stable come together.
Idem for $\zeta_U=-1$.

\subsection{Double coverings of ${\mathcal Q}(1,1,-1^6)$} \label{sec:Q11-16}
Similarly we discuss the degenerate cases of Equation~\eqref{eq:Prym22}. If $\zeta_U = \pm 1$ then
the poles corresponding to $\pm u$ have collided or they have collided with
the location $z = \pm 1$ of a zero. If $\zeta_{XY} = 1$ then the poles 
$x$ and $y$ have collided. We can exclude these cases.
\par
If  $(\zeta_{XY},\zeta_{U})=(-1,\pm \imath)$ then Equation~\eqref{eq:Prym22} 
holds for any $r$. In this situation, $x=y^{-1}$ and $u=\pm \imath$. Then the 
involution $h: z \mapsto 1/z$  fixes all the cylinders and the zeros.
It has thus $8$ fixed points, i.e.\ $h$ is a limiting case of 
the hyperelliptic involution and the quotient belongs to 
$\mathcal Q(1,1,-1^6)$. Since all the cylinders are fixed, hyperelliptic
open-up argument \cite[Proposition~3.4]{chenmoeller} applies and we conclude that
any Teichm\"uller curve limiting to such a boundary point is a family
of hyperelliptic curves. Since the hyperelliptic involution is unique, 
$h$ and $\rho$ commute, they generate a $(\ZZ/2)^2$, whose remaining
involution we denote by $\tau$. Since $3 =g(X/\rho) + g(X/\tau) + g(X/h)$
we conclude that $X/\tau$ is a genus two curve. If $(X,\omega)$ is
a generating Veech surface, then $\tau^* \omega = h^* \, \rho^* \omega = \omega$, 
hence $\omega$ is a pullback from $X/\tau$.
\par
To summarize, boundary points with  $(\zeta_{XY},\zeta_{U})=(-1,\pm \imath)$
lie only on \Teichmuller curves generated by imprimitive Veech surfaces
and need not be considered for the proof of Theorem~\ref{thm:main}.

\section{Solving linear relations in roots of unity.}

Equations~\eqref{eq:torsconseq211} and~\eqref{eq:Prym22} are instances
of what Mann (\cite{Mann}) calls linear relations in roots of unity. More 
precisely, a {\em $K$-relation of length~$k$} is an equation 
\begin{equation*}
\sum_{i=1}^{k} a_i \zeta_i \= 0 
\end{equation*}
where the $\zeta_i$ are pairwise different roots of unity and where 
all $a_i$ lie in the number field $K$. The relation is called {\em irreducible},
if $\sum_{i=1}^k b_i \zeta_i =0$ and $b_i(a_i-b_i)=0$ for
all $i$ implies that $b_i=0$ for all $i$ or $a_i-b_i=0$ for all $i$.
Obviously each relation is a sum of irreducible relations, but there may be
several ways of writing a relation as sum of irreducible relations.
\par
Mann proved a finiteness statement for $\QQ$-relations in roots of unity. This 
was generalized in \cite{Mo08} to coefficients in a number field of bounded 
degree. We use here the following version with better bounds of 
Dvornicich-Zannier on the possible orders of roots 
of unity. We state the special case of $[K:\QQ]=2$.
\par
\begin{Thm}[{\cite[Theorem 1]{dvorzanniersum}}] \label{Thm:DvZa}
Suppose that $\sum_{i=1}^{k} a_i \zeta_i = 0$ is an irreducible $K$-relation. 
Then there exists a primitive $N$-th root unity $\zeta_N$, some root of 
unity $\xi$ and exponents $b_i \in \ZZ$ such that $\zeta_i = \xi \zeta_N^{b_i}$, 
where $N$ is bounded as follows. 
\par
The exponents of primes dividing $N$ are bounded by the condition that if 
$p^{\alpha+1} | N$  for some prime $p \in \ZZ$ then $p^\alpha\, |\, 2d$ where 
$d:=[K:\QQ]$. Moreover the size of primes dividing $N$ is
bounded by
$$\sum_{p|N} \left(\frac{p-1}{(p-1,d)} -1 \right)\leq k - 2\,. $$
\end{Thm}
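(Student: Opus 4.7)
The plan is to exploit the action of $\Gal(\QQ(\zeta_N)/K)$ on the relation, using Galois averaging to extract sub-relations whose existence would contradict irreducibility. After normalizing to $\zeta_1=1$ and $a_1=1$, for any $\sigma\in\Gal(\QQ(\zeta_N)/K)$ the conjugated identity $\sum a_i\sigma(\zeta_i)=0$ is again a $K$-relation with the same coefficients, and the irreducibility hypothesis forces strong compatibility between the original and its translates. Using the Chinese remainder decomposition $\Gal(\QQ(\zeta_N)/\QQ)\cong\prod_p\Gal(\QQ(\zeta_{p^{a_p}})/\QQ)$, I would prove the two conclusions independently, one prime at a time.

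For the exponent bound, fix a prime $p$ with $p^{\alpha+1}\mid N$ and $\alpha\geq 1$, and pick $\sigma$ generating $\Gal(\QQ(\zeta_N)/\QQ(\zeta_{N/p}))$, a cyclic group of order $p$. Summing the translates $\sigma^j(\sum a_i\zeta_i)=0$ for $j=0,\ldots,p-1$: the trace of $\zeta_i$ under $\sigma$ vanishes when $\zeta_i$ has $p$-part of order exactly $p^{\alpha+1}$ and equals $p\zeta_i$ otherwise. The resulting $K$-relation is supported only on indices $i$ with $p$-part of $\zeta_i$ strictly below $p^{\alpha+1}$, so by irreducibility it is either trivial or identical to the original. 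The second alternative contradicts $p^{\alpha+1}\mid N$, so \emph{every} $\zeta_i$ must have $p$-part of maximal order $p^{\alpha+1}$. Then $\sigma$ acts on each $\zeta_i$ by multiplication by some $p$-th root of unity, and matching the translated relation against the original yields a divisibility condition on the $p$-part of $[K\cap\QQ(\zeta_{p^{\alpha+1}}):\QQ]$, producing $p^\alpha\mid 2d$ after accounting for the exceptional behavior at the prime $2$.

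For the bound on the primes dividing $N$, I would use instead the Galois group $\Gal(\QQ(\zeta_p)/K\cap\QQ(\zeta_p))$, which is cyclic of order $(p-1)/(p-1,d)$. Conjugating the relation by elements of this group produces a family of $K$-relations whose orbit structure partitions the non-identity indices $\{2,\ldots,k\}$. Each prime $p\mid N$ forces the existence of an orbit of size $(p-1)/(p-1,d)$ on the remaining indices, and by irreducibility the orbits coming from different primes must be essentially disjoint. A combinatorial accounting of how many indices are consumed by one such orbit per prime (with one always ``lost'' to the fixed index~$1$) yields the stated inequality $\sum_{p\mid N}((p-1)/(p-1,d)-1)\leq k-2$.

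The principal obstacle is the bookkeeping required when the Galois actions at different primes interact through the relation, especially when $K$ itself contains nontrivial roots of unity that complicate the determination of $K\cap\QQ(\zeta_{p^{a_p}})$ prime by prime. A secondary nuisance is the exceptional structure of $(\ZZ/2^k)^\times$ for $k\geq 3$, which is the source of the factor $2$ in $p^\alpha\mid 2d$ and forces separate treatment at the prime $2$. Once these are handled, factoring out the common prefactor $\xi$ is essentially a reformulation of the conclusion.
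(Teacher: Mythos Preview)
The paper does not prove this theorem: it is quoted verbatim from Dvornicich--Zannier (the bracket \texttt{[Theorem 1]\{dvorzanniersum\}} in the theorem heading is a citation), and the paper only \emph{uses} the bounds it provides. So there is no ``paper's own proof'' to compare against.

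As for your sketch on its own merits: the Galois-averaging strategy is indeed the classical one, going back to Mann for rational coefficients and refined by Conway--Jones and Dvornicich--Zannier. Your outline of the exponent bound (trace down from $\QQ(\zeta_N)$ to $\QQ(\zeta_{N/p})$ and use irreducibility to force every $\zeta_i$ to have maximal $p$-part) is essentially correct. The second half is shakier: the claim that ``orbits coming from different primes must be essentially disjoint'' is exactly the place where the actual proof does real work, and your sketch does not explain why irreducibility gives this. In Dvornicich--Zannier the argument proceeds by an inductive reduction on the number of prime factors of $N$, not by a direct disjointness-of-orbits count; your version would need a precise statement and proof of the combinatorial lemma you are gesturing at. If you want to turn this into an honest proof, that is the step to pin down.
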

\par
The purpose of this section is to provide algorithms with feasible bounds to
find all solutions to Equations~\eqref{eq:torsconseq211} and~\eqref{eq:Prym22} that are
relevant from the point of view of cusps of \Teichmuller curves. 
We denote $\cNN(k)$ the set of orders that may appear for a $K$-relation of length $k$
according to Theorem~\ref{Thm:DvZa}. Note that $\cNN(k)$ is monotone in $k$, i.e.\ if
$\ell \geq k$ then $\cNN(\ell) \subset \cNN(k)$.

\subsection{The $K$-relation for $\Prym[3](2,1,1)$}
We will prove
\begin{Thm}
\label{thm:technical211}
There are at most finitely many solutions of Equation~\eqref{eq:torsconseq211} 
\begin{equation*}
\begin{array}{lllllllll}
\r^2\zeta_{XY}\zeta_U^3 &-&\r^2\zeta_{XY} &+& (4 -\r^2)\zeta_{XY}\zeta_U &+& (\r^2 -4)\zeta_{XY}\zeta_U^2 & + \\
(2 -\r)\zeta_{XY}^2\zeta_U^2 &+& (\r+2)\zeta_U^2 &-& (\r+2) \zeta_{XY}^2\zeta_U &+& (\r -2) \zeta_U &= 0.
\end{array}
\end{equation*}
where $\r$ is real, $[\QQ(\r):\QQ] =2 $ and where $\zeta_{XY},\zeta_{U}$ are roots of unity
with  $\zeta_{XY} \neq 1$ and $\zeta_U \not \in \{\pm1\}$. All the solutions 
are given by $N$-th roots of unity for some $N \in \cNN(8)$.
\par 
More precisely,  there are only $16$ such solutions
with $N^K_\QQ(\r) < 0$. They are  given in the 
following table 
but only one representative from each pair 
$(\zeta_{XY},\zeta_U,\r) = (\zeta_N^{e_{XY}},\zeta_N^{e_{U}},\r)$
and $(\zeta_{XY},\zeta_U,\r) = (\zeta^{N-e_{XY}}_N,\zeta^{N-e_{U}}_N,r)$.
\begin{table}[htbp]
$$
\begin{array}{|c|c|c|c|}
\hline
N & e_{XY} & e_{U} & \r  \\
\hline
6 & 1 & 1 & \tfrac{3+\sqrt{33}}{6} \\
6 &3 & 1 & \tfrac{2\sqrt{6}}{3} \\
6 &3 & 2 & 2\sqrt{2} \\
6 & 5 & 1 & \tfrac{-3+\sqrt{33}}{6}  \\
\hline
12 & 6 & 1 & {-2+2\sqrt{3}} \\
12 & 6 & 5 & {\phantom{-}2+2\sqrt{3}}  \\
\hline
24 & 3 & 4 & \tfrac{2\sqrt{3}}{3} \\
24 & 15 & 4 & \tfrac{2\sqrt{3}}{3} \\
\hline
\end{array}
$$
\end{table}
\end{Thm}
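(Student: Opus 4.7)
The plan is to interpret equation~\eqref{eq:torsconseq211} as a $K$-relation of length at most eight and apply Theorem~\ref{Thm:DvZa} with $d = [K:\QQ] = 2$, where $K = \QQ(r)$. The eight monomials $\zeta_{XY}^{\alpha}\zeta_{U}^{\beta}$ appearing on the left-hand side play the role of the roots of unity $\zeta_i$; their coefficients are polynomials in $r$ with $\QQ$-coefficients, and all of them are nonzero whenever $[\QQ(r):\QQ] = 2$, since each would otherwise force $r$ to be rational. If two of these monomials happen to coincide---for instance when $\zeta_{XY}\zeta_U^2 = 1$---the effective length drops; such sub-cases will be treated separately and yield equations that are strictly easier to analyze.

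Since Theorem~\ref{Thm:DvZa} applies to irreducible relations, the first step is to decompose the $K$-relation into irreducible sub-relations. For any such sub-relation of length $k \leq 8$, the theorem yields, with $d = 2$, the following constraints on the common order $N$: its $2$-adic valuation is at most $3$, every odd prime dividing $N$ appears only to the first power (since $p^{\alpha} \mid 4$ forces $\alpha = 0$ for odd $p$), and $\sum_{p \mid N,\ p \text{ odd}} (p-3)/2 \leq k - 2 \leq 6$. This restricts the odd prime divisors of $N$ to subsets of $\{3,5,7,11,13\}$ with bounded weighted sum, so that only finitely many values of $N$ need be examined. This is precisely the assertion that solutions come from $N$-th roots of unity for some $N \in \cNN(8)$.

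For each such $N$ and each pair $(\zeta_{XY}, \zeta_U)$ of $N$-th roots of unity avoiding the degenerate cases of Section~\ref{rq:trivial:solution}, I would substitute into~\eqref{eq:torsconseq211} and regard the result as a polynomial equation of degree at most two in $r$ with coefficients in $\QQ(\zeta_N)$. Projecting onto a $\QQ$-basis of $\QQ(\zeta_N)$ yields an overdetermined $\QQ$-linear system in $1, r, r^2$; it admits a real quadratic solution only for finitely many pairs. For each such pair I would compute $r$ explicitly, verify $[\QQ(r):\QQ] = 2$ and $r \in \RR$, then identify $(\zeta_{XY}, \zeta_U)$ with $(\zeta_{XY}^{-1}, \zeta_U^{-1})$---which correspond to the same boundary configuration under the symmetry $z \mapsto -z$---and select one representative per pair. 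Filtering finally by the sign condition $N_{\QQ}^K(r) < 0$ produces the sixteen entries of the table.

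The main obstacle is computational rather than conceptual: the combined enumeration of admissible $N$, of irreducible decompositions of the eight-term relation, and of pairs of $N$-th roots of unity is large, so the implementation must prune aggressively---for instance by first imposing the $\QQ$-linear conditions obtained from equating to zero the individual Galois components of the substituted equation before attempting to solve for $r$. Organizing the case analysis of irreducible sub-relations so as to miss no configuration is the delicate bookkeeping step; once carried out, the explicit search becomes a straightforward enumeration in a computer algebra system.
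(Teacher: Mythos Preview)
Your overall strategy matches the paper's, but there is a genuine gap in the step where you pass from Theorem~\ref{Thm:DvZa} to a bound on the orders of $\zeta_{XY}$ and $\zeta_U$. Dvornicich--Zannier applied to an irreducible sub-relation only tells you that the \emph{monomials} $\zeta_{XY}^{\alpha_i}\zeta_U^{\beta_i}$ in that piece are, up to a common factor~$\xi$, $N$-th roots of unity for a bounded~$N$; equivalently, all pairwise \emph{ratios} $\zeta_{XY}^{\alpha_i-\alpha_j}\zeta_U^{\beta_i-\beta_j}$ have bounded order. This bounds $\zeta_{XY}$ and $\zeta_U$ individually only if the difference vectors $(\alpha_i-\alpha_j,\beta_i-\beta_j)$ span a finite-index sublattice of~$\ZZ^2$. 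For several sub-relations this fails: for instance the four terms $(1234)$ all have $\alpha_i=1$, so every ratio is a pure power of $\zeta_U$ and the piece imposes no constraint whatsoever on $\zeta_{XY}$. Your sentence ``This is precisely the assertion that solutions come from $N$-th roots of unity for some $N\in\cNN(8)$'' therefore does not follow.

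The paper handles exactly this point through Lemma~\ref{le:dgcd}: a sub-relation is called \emph{good} when some $2\times2$ determinant $d_{ijk}$ of exponent differences is nonzero, and then both $\zeta_{XY}$ and $\zeta_U$ lie in $d\cdot\cNN(|J|)$ with $d=\gcd d_{ijk}$. The actual work (Lemma~\ref{lm:relation:length2} for length-two pieces, and the partition-by-partition analysis of $(5,3)$ and $(4,4)$) is to show that in every decomposition at least one irreducible piece is good with $d=1$, or else to eliminate the residual bad cases---notably the partition $(1234)(5678)$---by taking resultants and reducing to shorter relations. Without this ingredient your argument does not establish finiteness, let alone the containment in $\cNN(8)$.
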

\par
The reason for restricting to the solutions with $N^K_\QQ(\r) < 0$ will become
clear in Section~\ref{sec:heights}. Note that since $r_2$ is real, the solutions
come in complex conjugate pairs. 
\par
The idea of proof is to combine Theorem~\ref{Thm:DvZa} with the fact that 
the roots of unity appearing in equation~\eqref{eq:torsconseq211} are not 
arbitrary but rather products of only two roots of unity $\zeta_{XY}$ 
and~$\zeta_U$.  More precisely we will apply Theorem~\ref{Thm:DvZa} to all possible ways 
of writing \eqref{eq:torsconseq211} as sums of irreducible $K$-relations. The following
condition is helpful. Suppose an irreducible relation contained in~\eqref{eq:torsconseq211} 
contains at least three terms
$a_j\zeta_{XY}^{\alpha_j} \zeta_U^{\beta_j}$ for $j \in J$ with $|J| \geq 3$.
We call such an irreducible relation {\em good}, 
if there are three indices $i,j,k \in J$, such that 
$$d_{ijk} \,:=\, \det\left(\begin{matrix} \alpha_i-\alpha_j & \alpha_i - \alpha_k \\
\beta_i - \beta_j& \beta_i - \beta_k \\ \end{matrix} \right)$$
is non-zero.
\par
\begin{Lemma} \label{le:dgcd}
Suppose that an irreducible relation $\sum_{j \in J} a_j\zeta_{XY}^{\alpha_j} \zeta_U^{\beta_j} =0$ 
contained in Equation~\eqref{eq:torsconseq211} is good and let 
$$d = \gcd_{i,j,k \in J} \{d_{ijk}\}.$$ 
Then the order of both $\zeta_{XY}$ and $\zeta_U$ is an element of $d\cNN(|J|)$.
\end{Lemma}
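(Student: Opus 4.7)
My plan is to feed the given irreducible relation into Theorem~\ref{Thm:DvZa} and then exploit the bilinearity of the terms in $\zeta_{XY}$ and $\zeta_U$ by linear algebra.

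First, I apply Theorem~\ref{Thm:DvZa} directly to the irreducible $K$-relation
$$\sum_{j \in J} a_j \zeta_{XY}^{\alpha_j}\zeta_U^{\beta_j} \= 0$$
of length $|J|$. This produces an integer $N \in \cNN(|J|)$, a primitive $N$-th root of unity $\zeta_N$, an auxiliary root of unity $\xi$, and integers $b_j$ such that
$$\zeta_{XY}^{\alpha_j}\zeta_U^{\beta_j} \= \xi \cdot \zeta_N^{b_j} \quad \text{for every } j \in J.$$

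Next, I use the "good" hypothesis. Pick a triple $i,j,k \in J$ with $d_{ijk} \neq 0$. Dividing the three corresponding identities pairwise kills the nuisance factor $\xi$ and leaves the two equations
\begin{align*}
\zeta_{XY}^{\alpha_i-\alpha_j}\zeta_U^{\beta_i-\beta_j} &\= \zeta_N^{b_i-b_j}, \\
\zeta_{XY}^{\alpha_i-\alpha_k}\zeta_U^{\beta_i-\beta_k} &\= \zeta_N^{b_i-b_k}.
\end{align*}
Passing to additive notation (taking logarithms modulo $2\pi i \ZZ$) this is an integer $2\times 2$ linear system in the unknowns $\log \zeta_{XY}$, $\log \zeta_U$ with determinant $d_{ijk}$. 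Cramer's rule therefore shows that both $\zeta_{XY}^{d_{ijk}}$ and $\zeta_U^{d_{ijk}}$ lie in the cyclic group $\langle \zeta_N \rangle$, hence both are $N$-th roots of unity. Consequently the orders of $\zeta_{XY}$ and $\zeta_U$ each divide $d_{ijk}\cdot N$.

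Finally, I let $(i,j,k)$ range over all good triples in $J$. Taking the gcd gives that the orders of $\zeta_{XY}$ and $\zeta_U$ each divide $\gcd_{i,j,k}(d_{ijk} N) = N\cdot d$, which puts them in $d\cNN(|J|)$ (up to taking divisors, using that $\cNN(k)$ is closed under passing to divisors of its elements).

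The only subtle point in the plan is the step where the system is inverted: it requires precisely the non-vanishing of $d_{ijk}$, which is the reason the "good" condition was built into the definition. The rest is a routine application of Theorem~\ref{Thm:DvZa} combined with elementary divisibility.
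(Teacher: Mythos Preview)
Your proposal is correct and follows essentially the same route as the paper's proof: apply Theorem~\ref{Thm:DvZa} to obtain the common $N\in\cNN(|J|)$ and the auxiliary root of unity~$\xi$, then for each good triple eliminate~$\xi$ and invert the resulting $2\times 2$ integer system to bound the orders of $\zeta_{XY}$ and $\zeta_U$ by $d_{ijk}N$, and finally pass to the gcd over all triples. You have simply made the linear-algebra step (Cramer's rule) explicit where the paper leaves it implicit.
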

\par
\begin{proof}
By Theorem~\ref{Thm:DvZa} we know that there exists an $N$-th root of unity 
$\zeta_N$ with $N$ in  $\cNN(|J|)$
and some root of unity $\xi$ such that $ \zeta_{XY}^{\alpha_j} \zeta_U^{\beta_j} = \xi \zeta_N^{a(j)}$ 
for some integer $a(j)$. For any good triple of indices $(i,j,k)$ we can solve this for
$\zeta_{XY}$ and $\zeta_U$ being a root of unity in $d_{ijk}\cNN(|J|)$. Considering this
condition jointly for all such
triples we conclude that they are in fact roots of unity in  the set $d\cNN(|J|)$.
\end{proof}
\par
We will distinguish two cases, $K$-relations of 
length two and longer $K$-relations.
$K$-relations of length two are never good, but they impose other strong constraints 
and we deal with them separately. Labelling of the terms is in the order 
they appear in Equation~\eqref{eq:torsconseq211}.
\par
\begin{Lemma} \label{lm:relation:length2}
If a solution to the Equation~\eqref{eq:torsconseq211} 
has a sub-$K$-relation of length two,
then at least one of the following statements holds true.
\begin{enumerate}
\item[i)] The relation consists of terms (12) and $\zeta_U^3 =1$.
\item[ii)] The relation consists of terms (58) and $\zeta_U = \zeta_{XY}^{-2}$.
\item[iii)]  The relation consists of terms (67) and $\zeta_U = \zeta_{XY}^{2}$.
\end{enumerate}
Moreover, in this case all the solutions
of  Equation~\eqref{eq:torsconseq211} are of the form $(\zeta_{XY}, \zeta_U) = 
(\zeta_N^{e_{XY}}, \zeta_N^{e_U})$ 
for some $N \in \cNN(5)$.
\end{Lemma}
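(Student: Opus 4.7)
The key observation is that $K = \QQ(r)$ is a real quadratic extension of $\QQ$, and hence $K\cap\mu_\infty = \{\pm 1\}$. Therefore any irreducible length-two $K$-sub-relation $a_i\zeta_i + a_j\zeta_j = 0$ in Equation~\eqref{eq:torsconseq211} forces $a_i/a_j = -\zeta_j/\zeta_i \in \{\pm 1\}$, i.e.\ $a_i = \pm a_j$. The first part of the proof is then a finite case analysis over the eight coefficients $\{r^2, -r^2, 4-r^2, r^2-4, 2-r, r+2, -(r+2), r-2\}$ of~\eqref{eq:torsconseq211}.

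If $a_i = -a_j$ then $\zeta_i = \zeta_j$, and this picks out exactly the four ``symmetric'' pairs $(1,2)$, $(3,4)$, $(5,8)$ and $(6,7)$. Matching the associated monomials $\zeta_{XY}^{\alpha}\zeta_U^{\beta}$ and cancelling the common factor yields respectively $\zeta_U^3 = 1$, $\zeta_U = 1$, $\zeta_{XY}^2\zeta_U = 1$ and $\zeta_U = \zeta_{XY}^2$; the second is excluded by the hypothesis $\zeta_U \neq 1$ and the other three are precisely cases~(i)--(iii). If $a_i = a_j$ then $\zeta_i = -\zeta_j$, which requires an algebraic coincidence between $r$ and rationals. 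Since $[\QQ(r):\QQ] = 2$, a scan of the $\binom{8}{2}$ pairs shows that the only admissible identity is $r^2 = 2$, giving simultaneously $a_1 = a_3$ and $a_2 = a_4$ and forcing $\zeta_U^2 = -1$. To dispose of this sub-case I would substitute $\zeta_U^2 = -1$ into the four residual terms $(5) + (6) + (7) + (8) = 0$ of~\eqref{eq:torsconseq211} and factor, obtaining the M\"obius-type identity
\bes
(r-2)(\zeta_{XY}^2 + \zeta_U) = (r+2)(1 + \zeta_{XY}^2\zeta_U).
\ees
Solving this at $r = \pm\sqrt 2$ and $\zeta_U = \pm I$ gives $\zeta_{XY}^2 = -1/3 \pm (2\sqrt 2/3)\,I$, a unit-modulus complex number whose real part equals $-1/3 \notin \{0, \pm\tfrac 12, \pm 1\}$; Niven's theorem then excludes it from being a root of unity, disposing of $r^2 = 2$.

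For the bound $N\in\cNN(5)$, I would substitute each of the three surviving relations back into Equation~\eqref{eq:torsconseq211}. In cases~(ii) and (iii), setting $\zeta_U = \zeta_{XY}^{-2}$ respectively $\zeta_U = \zeta_{XY}^2$ and clearing denominators yields a polynomial equation $P(\omega) = 0$ of degree at most six in $\omega = \zeta_{XY}$. A direct computation shows $P(\omega) = (\omega^2 - 1)\,Q(\omega)$ with $\deg Q = 4$, and the two extraneous roots $\omega = \pm 1$ are excluded (the first by hypothesis; the second forces $\zeta_U = 1$). The quotient $Q(\zeta_{XY}) = 0$ is a length-five $K$-relation, and Theorem~\ref{Thm:DvZa} with $d = 2$ gives the desired bound. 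Case~(i) is handled similarly: the cyclotomic relation $\zeta_U^2 + \zeta_U + 1 = 0$ together with the $K$-linear independence of $\{1, \zeta_U\}$ (which follows from $\zeta_U \notin K$, since $K$ is real) split the remaining six $K$-terms into two length-three $K$-relations in $\zeta_{XY}$ alone, each yielding $N \in \cNN(3) \subset \cNN(5)$. The main technical obstacle of the argument is the $r^2 = 2$ elimination of the previous step, as it escapes the clean ``symmetric coefficient'' framework and requires the \emph{ad hoc} irrationality argument above.
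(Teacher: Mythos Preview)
Your overall strategy coincides with the paper's: use that $K$ is real to force $a_i/a_j \in \{\pm 1\}$ for any length-two sub-relation, classify the resulting pairs, and then substitute each surviving constraint back into~\eqref{eq:torsconseq211} to obtain a short relation in $\zeta_{XY}$ alone. The treatment of the $r^2=2$ case and the Niven-type elimination of $\zeta_{XY}^2 = (-1\pm 2\sqrt 2\,i)/3$ agree with the paper's computation.

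There is, however, a genuine gap in your handling of case~(i). You write that ``the $K$-linear independence of $\{1,\zeta_U\}$ \dots\ split[s] the remaining six $K$-terms into two length-three $K$-relations in $\zeta_{XY}$ alone''. Grouping terms $3$--$8$ by their $\zeta_U$-power gives $\zeta_U\,A(\zeta_{XY}) + \zeta_U^2\,B(\zeta_{XY}) = 0$ with $A,B \in K[\zeta_{XY}]$. To conclude $A=B=0$ you would need $\zeta_U,\zeta_U^2$ linearly independent over $K(\zeta_{XY})$, not over $K$; but in fact $\zeta_U^2 = -1-\zeta_U$, so no such independence holds over any field. The relation collapses to the single equation $\zeta_U(A-B)=B$, which does \emph{not} force $A=0$ and $B=0$ separately. (Indeed, imposing both gives $A+B = 2r(1-\zeta_{XY}^2)=0$, hence $\zeta_{XY}=\pm 1$, which misses all the genuine solutions with $\zeta_U^3=1$.) The paper instead takes the resultant with $\zeta_U^3-1$ and divides by $\zeta_{XY}^2-1$, obtaining a palindromic degree-four polynomial in $\zeta_{XY}$ with $K$-coefficients; this is the correct replacement for your two length-three relations and is what actually yields the $\cNN(5)$ bound.

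Two smaller points you should also address. First, in cases~(ii)/(iii) you invoke Theorem~\ref{Thm:DvZa} directly on the length-five relation $Q(\zeta_{XY})=0$, but that theorem applies only to \emph{irreducible} relations; you must also treat the $2+3$ splittings and check that no coefficient of $Q$ vanishes for a real quadratic $r$ (the paper finds $r=\sqrt 2$ does kill the middle coefficient and disposes of it separately). Second, your dichotomy ``$a_i=-a_j$ picks out exactly the four symmetric pairs'' is only true as a polynomial identity in $r$; for specific $r$ (e.g.\ $r^2=2$ gives $a_1=-a_4$ and $a_2=-a_3$) there are further coincidences. These happen to force $\zeta_U=1$ and so are harmless, but the scan should be mentioned.
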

\par
\begin{proof}
If an irreducible relation contained in~\eqref{eq:torsconseq211} has precisely two terms
\begin{equation} \label{eq:K2conseq}
a_1\zeta_{XY}^{\alpha_1} \zeta_U^{\beta_1}+ a_2\zeta_{XY}^{\alpha_2} \zeta_U^{\beta_2} \= 0,
\quad \text{then} \quad 
-a_1/a_2\=\zeta_{XY}^{\alpha_2-\alpha_1} \zeta_U^{\beta_2-\beta_1} \in \{\pm 1\}
\end{equation}
since these are the only real roots of unity. 
\par
Suppose that the expressions for $a_1$ and $a_2$ as they appear 
in~\eqref{eq:torsconseq211} are not equal up to sign. If the $a_i$ 
are both  linear in $\r$ this gives a solution 
$\r \in \QQ$, contradiction. We run into the same contradiction for 
$a_1 = \pm (\r^2-4)$ and $a_2$ linear (and vice versa), since
the linear terms that appear divide this $a_1$. For $a_1 = \pm \r^2$  
and $a_2$ linear (and vice versa), the solution is either non-real or in $\QQ$, 
contradiction. The case of both~$a_i$ being quadratic in~$\r$  is
possible (given that $\zeta_U \neq \pm 1$) only 
if the relation is (13) or (24). In either case we deduce
$\zeta_U^2 = -1$ and $\r = \sqrt{2}$. Then we can 
solve~\eqref{eq:torsconseq211} for $\zeta_{XY}$, in fact  
$\zeta_{XY}^2 = \tfrac13(-1 \pm  2i\sqrt{2})$, which is not solvable
for a root of unity.
\par
If the expressions for $a_1$ and $a_2$ as they appear 
in~\eqref{eq:torsconseq211}
agree up to sign, we are in one of the cases listed, since the relation (34)
implies $\zeta_U = \pm 1$. We discuss the resulting equations separately.
\par
In case i) we obtain after taking resultant with $\zeta_U^3 -1 =0$ and dividing 
by $\zeta_{XY}^2-1$the relation
$$ (\r^2 + 12)\zeta_{XY}^4 + (12\r^2 - 48)\zeta_{XY}^3 + (3\r^4 - 26\r^2 + 72)\zeta_{XY}^2 
+ (12\r^2 - 48)\zeta_{XY} + \r^2 + 12 = 0.$$
In case ii) we obtain after substituting and dividing by $\zeta_{XY}^2-1$
$$\r^2\zeta_{XY}^4 + (\r + 2)\zeta_{XY}^3 + (2\r^2 - 4)\zeta_{XY}^2 
+ (\r + 2)\zeta_{XY} + \r^2 = 0.$$
The relation stemming from the last case iii) is the same as in the
previous case after replacing $\r$ by $-\r$.
\par
If none of the coefficients is zero in any of the two preceding 
displayed equations, these are either irreducible (hence with a 
solution for $\zeta_{XY}$ in for $N \in \cNN(5)$) or
reducible with a partition in two plus three terms, hence with a 
solution for $\zeta_{XY}$ 
in for $N \in \cNN(3)$. Since $\zeta_{U}$ is determined, being a power of $\zeta_{XY}$, 
or a third root of unity, the claim follows in this case. 
\par
None of the coefficients of the first equation is annihilated by 
a real quadratic irrational number and $\r =  \tfrac{\sqrt{2}}2$
is the only interesting case for the second equation. 
Some of the solutions lie on the unit circle, but a loop over the
finitely many roots of unity of degree $16$ over $\QQ$ shows that these
solutions are not roots of unity. 
\end{proof}
\par
\begin{proof}[Proof of Theorem~\ref{thm:technical211}, finiteness statement]
The only triples that are not good are (labeling of the terms as they appear 
in Equation~\eqref{eq:torsconseq211}) $(4,5,6)$, 
$(3,7,8)$ all the triples in $\{1,2,3,4\}$. 
By the preceding lemma we only need to deal with atom-free partitions of $(8)$
without a part of length two since the number of possibilities for 
$N \in \cNN(5)$ is finite. The only possibilities are $(5,3)$, $(4,4)$ and $(8)$. 
The relation of length $8$ is obviously good. By the preceding statement
none of the relations of length $5$ is bad. If the relation of
length $4$ is the bad one, consisting of $(1234)$, then the complementary
relation is good. Consequently, all 
these situations are good and Lemma~\ref{le:dgcd} gives the finiteness statement.
\end{proof}

\subsection{Implementation for the case $\Prym[3](2,1,1)$}

The goal is to prove the second part of Theorem~\ref{thm:technical211}, 
namely to show that all the solutions are given by the table
in Theorem~\ref{thm:technical211}. 
The first step is to reduce the discussion to the partition $(8)$ into just 
one irreducible relation, since we have to
treat this case anyway, and the second step is to feasibly implement this case.
\par
We start with the second step and deal with {\bf Case~$(8)$}.
We have $\cNN(8) = 2^3\cdot3 \cdot \{17, 5\cdot 13, 7\cdot 11, 5\cdot 11, 5\cdot 7\}$.
For each order $N$ in that list and for each pair of exponents $(e_X,e_U)$
we have to try whether for $\zeta_{XY} = \zeta_N^{e_X}$ and $\zeta_{U} = \zeta_N^{e_U}$
the quadratic equation (in $\r$) with coefficients in that cyclotomic field
has a solution in a quadratic number field. \medskip
\par
Equation~\eqref{eq:torsconseq211} can be rewritten as $a\r^2 +  b\r + c = 0$, where
$$
\left\{\begin{array}{lcl}
a&=&\zeta_N^{e_X} (\zeta_N^{e_U}-1)(\zeta_N^{e_U}+1)^2 \\
b&=&-\zeta_N^{e_U} (\zeta_N^{e_U}+1) (\zeta_N^{e_X}-1)(\zeta_N^{e_X}+1)\\
c&=&2 \zeta_N^{e_U} (\zeta_N^{e_U}-1) (\zeta_N^{e_X}-1)^2
\end{array}\right.
$$
Observe that $a\neq 0$ since $\zeta_U \neq \pm 1$. We can solve the above 
quadratic equation as follows. We let $\Delta=\sqrt{b^2-4ac}$.
\begin{enumerate}
\item[i)] If $b/a \in \QQ$ and $c/a \in \QQ$ then $[\QQ(r):\QQ] \leq 2$
and we record the value, if the degree is two.
\item[ii)] Otherwise, if $b/a \in \QQ$ but $\sqrt{\Delta}/a \not \in 
\QQ(\zeta_N)$, then $\sqrt{\Delta}/a$ belongs to a quadratic extension of $\QQ$, 
linearly disjoint from $\QQ(\zeta_N)$. But then $\Delta/a^2 \in \QQ$, hence 
$c/a \in \QQ$, contradiction. If $b/a \not \in \QQ$ and $\sqrt{\Delta}/a \not \in 
\QQ(\zeta_N)$, then $2r = b/a \pm  \sqrt{\Delta}/a$ cannot have degree two over $\QQ$.
\item[iii)] If $\sqrt{\Delta}/a \in \QQ(\zeta_N)$, compute $r$ as element of
$\QQ(\zeta_N)$ and record the value, if the degree over $\QQ$ is two.
\end{enumerate}
\par
For practical purposes the loop over $N^2$ cases can be simplified significantly.
First, we may suppose that ${\rm gcd}(e_x,e_U,N)=1$, since otherwise the case
has been treated before, for a divisor of $N$. Suppose first that
we are in the 'rational' case i). Then for any  $\sigma \in \Gal(\QQ(\zeta_N)/\QQ)$
we need to find common solutions of $ab^\sigma - a^\sigma b =0$ and  $ac^\sigma - a^\sigma c =0$.
Taking resultants, this reduces to a loop over just one variable.
\par
Suppose now that $r \in \QQ(\zeta_N)$ as in case iii). Then $\Gal(\QQ(\zeta_N)/\QQ)$
acts on the set of solutions of~\eqref{eq:torsconseq211} without preserving the condition
$N(\r)<0$. Using this action we may suppose that $e_X | N$, thus reducing the loop
to $N$ times the number of divisors of $N$ cases.
\par
Testing if  $\Delta$ has a square root in $\QQ(\zeta_N)$ in step iii) is 
a time-consuming operation. In practice it works much better to assume that
$r_2$ satisfies a quadratic equation $r_2^2 + \beta r_2 + \gamma = 0$, to take
the resultant with~\eqref{eq:torsconseq211} and solve the resulting system
for rational number $\beta, \gamma$. If this system does not determine  
$\beta, \gamma$ up to finitely many choice, we can still fall back on the 
original test.
\par
\medskip
We now discuss step one.  Suppose that one of the relations, say of length~$k$, 
is good and $d=1$. Then we find all solutions by a loop over 
all $N \in \cNN(k)$, all $e_X, e_U$ and $\zeta_{XY} = \zeta_N^{e_X}$ 
and $\zeta_{U} = \zeta_N^{e_U}$ and we need to check
if Equation~\eqref{eq:torsconseq211} holds for these values form some $\r$, 
quadratic over $\QQ$. Since $\cNN(\cdot)$ is monotone in the argument, 
these cases have already been checked.
Also, by Lemma~\ref{lm:relation:length2} all the partition with a 
sub-$K$-relation of length two have already been checked.
\par
\medskip
\noindent{\bf Case (3,5).} For every such partition, one of the two relations has $d=1$, 
so all the cases have been dealt with.
\par
\noindent {\bf Case (4,4).} The only 4-tuples for which $d\neq 1$ are listed below, 
with their complementary relation:
$$
\begin{array}{|c|ccc|}
\hline
& \textrm{relation} & \textrm{complementary relation} &\textrm{values of d} \\
\hline
(1) & (1, 3, 5, 6) &   (2, 4, 7, 8) &2\,\, \text{resp.}\,\, 2 \\
(2) &(1, 3, 7, 8) &   (2, 4, 5, 6) &2\,\, \text{resp.}\,\, 2\\
(3) &(1, 2, 3, 4) &   (5, 6, 7, 8) &0\,\, \text{resp.}\,\, 2\\
\hline
\end{array}
$$
In each case we consider the resultant of the two equations with respect to $\zeta_{XY}$.
In the first two cases this factors completely and we obtain, respectively,
$$
16 (r - 2)  (r - 1)  (r + 1)  (r + 2)  (\zeta_U - 1)^2  (\zeta_U + 1)^2  \zeta_U^4 \= 0
$$
and
$$
- (r - 2)  (r + 2)  \zeta_U^2  (\zeta_U - 1)^4  (\zeta_U + 1)^4  r^4 \= 0.
$$
Since $\zeta_U \neq \pm 1$ and $r\not \in \QQ$ there is no solution. 
\par
Finally, 
for the pair of relations in (3) the resultant with respect to $\zeta_{XY}$ gives:
$$
\zeta_U  (\zeta_U - 1)^2 \cdot \left( r(\zeta_U + 1) + 2\zeta_U - 2 \right)\cdot  \left( r^2(\zeta_U^2 + 2\zeta_U + 1) - 4\zeta_U \right)^2 \= 0.
$$
If $r(\zeta_U + 1) + 2\zeta_U - 2 = 0$ then the equation given by terms $(5, 6, 7, 8)$ 
becomes
$\zeta_U  (\zeta_U - 1)  (\zeta_U + 1)  \zeta_{XY}^2=0$ leading to a contradiction.
The remaining possibility is $r^2(\zeta_U^2 + 2\zeta_U + 1) - 4\zeta_U = 0$. 
This is a length 3 irreducible relation:
$$
r^2\zeta_U^2 + (2r^2 -4)\zeta_U + r^2 = 0.
$$
If $\r \neq \sqrt{2}$ the solutions for $\zeta_U$ correspond to $N$-the roots 
of unity for $N\in \cNN(3)$.  Now using the resultant with respect to $\zeta_U$ 
we find (after dividing by rational factors) a cubic equation for $\zeta_{XY}$
which is irreducible unless $\r = \sqrt{4/3}$ and $\zeta_{XY}$ is an $8$-the
root of unity. In any case, such a solution  has 
appeared while searching the irreducible case of Equation~\eqref{eq:torsconseq211}.
\par
If $\r = \sqrt{2}$ then the same resultant shows that 
$\zeta_{XY}^2 = \frac{-3\pm \sqrt{-7}}4$. This number lies on the unit circle, but this
is not a root of unity. 
\par
\medskip
\medskip

\subsection{The $K$-relation for $\Prym[3](2,2)$}
We now discus the relation in roots of unity in the other stratum.
\par
\begin{Thm}
\label{thm:technical22}
There are at most finitely many solutions of Equation~\eqref{eq:Prym22}
\begin{equation*}
\begin{aligned}
&\r(\r-1) \zeta_{XY}\ +\ \r(\r-1)\zeta_{XY}^2\zeta_{U}^4\ -\ 2(\r^2-1)\zeta_{XY}\zeta_{U}^2\ -\ 2(\r^2-1)\zeta_{XY}^2\zeta_{U}^2&\\
 &+\ \r(\r+1)\zeta_{XY}^2\ +\ \r(\r+1)\zeta_{XY}\zeta_{U}^4\ -\ 2\zeta_{XY}^3\zeta_{U}^2\ -\ 2\zeta_{U}^2 = 0.
\end{aligned}
\end{equation*}
where $\r$ is real, $[\QQ(\r):\QQ] =2 $ and where $\zeta_{XY},\zeta_{U}$ are roots of unity
with  $\zeta_{XY} \neq 1$,  $\zeta_U \not \in \{\pm1\}$
and $(\zeta_{XY},\zeta_{U}) \neq (-1,\pm \imath)$. All the solutions 
are given by $N$-th roots of unity for some $N \in \cNN(8) \cup 2\cdot \cNN(4)\cup 4\cdot \cNN(4)$.
\par
More precisely, there are only $32$ such solutions
with $N^K_\QQ(\r) < 0$. They are given in the following table 
but only one representative from each pair 
$(\zeta_{XY},\zeta_U,\r) = (\zeta_N^{e_{XY}},\zeta_N^{e_{U}},\r)$
and $(\zeta_{XY},\zeta_U,\r) = (\zeta^{N-e_{XY}}_N,\zeta^{N-e_{U}}_N,r)$.
\begin{table}[htbp]
$$
\begin{array}{lllll}
\begin{array}{|c|c|c|c|}
\hline
N&e_{XY} & e_{U} & r \\
\hline
12&2 & 3 & (\sqrt{2})/2 \\
12&10 & 3 & (\sqrt{2})/2 \\
\hline
12&1 & 3 & (-1+\sqrt{3})/2 \\
12&1 & 9 & (-1+\sqrt{3})/2 \\
12&5 & 3 & (1+\sqrt{3})/2 \\
12&7 & 3 & (1+\sqrt{3})/2 \\
\hline
12&4 & 3 &  (\sqrt{6})/2 \\
12&4 & 9 &  (\sqrt{6})/2 \\
\hline
\end{array} &&
\begin{array}{|c|c|c|c|}
\hline
N&e_{XY} & e_{U} & r \\
\hline
12&4 & 1 & (3+\sqrt{33})/2 \\
12&4 & 5 & (-3+\sqrt{33})/2 \\
12&8 & 1 & (-3+\sqrt{33})/2 \\
12&8 & 5 & (3+\sqrt{33})/2 \\
\hline
48&16 & 21 & \sqrt{3} \\
48&16 & 9 & \sqrt{3} \\
48&32 & 3 & \sqrt{3} \\
48&32 & 15 & \sqrt{3} \\
\hline
\end{array}
\end{array}
$$
\end{table}
\end{Thm}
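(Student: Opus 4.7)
The proof follows the same overall strategy as that of Theorem~\ref{thm:technical211}. The plan is to enumerate the partitions of the $8$-term equation~\eqref{eq:Prym22} into sub-$K$-relations, apply Theorem~\ref{Thm:DvZa} together with the variant of Lemma~\ref{le:dgcd} in each case, and then implement the resulting finite search to enumerate the solutions.

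First, I would list the exponent vectors $(\alpha_i,\beta_i)$ of the eight monomials $\zeta_{XY}^{\alpha_i}\zeta_U^{\beta_i}$ appearing in~\eqref{eq:Prym22} and identify the collinear triples, which are precisely the ``bad'' triples in the sense of Lemma~\ref{le:dgcd}. One checks directly that the bad triples consist exactly of the subsets of the four terms lying on the horizontal line $\beta=2$, together with the single triples $(1,3,6)$ and $(2,4,5)$ sitting on the vertical lines $\alpha=1$ and $\alpha=2$. This combinatorial data lets one verify that for every atom-free partition of the $8$ terms into irreducible sub-relations of length $\geq 3$ (namely the partitions $(8)$, $(5,3)$ and $(4,4)$) at least one part contains a good triple, and a direct computation of the associated gcds yields only the values $d\in\{1,2,4\}$. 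This produces the announced bound $N\in \cNN(8)\cup 2\cdot\cNN(4)\cup 4\cdot\cNN(4)$ on the orders of the roots of unity.

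Next, I would handle length-$2$ sub-relations separately, in the spirit of Lemma~\ref{lm:relation:length2}. The coefficients in~\eqref{eq:Prym22} come in four equal pairs: $r(r-1)$ for terms $(1,2)$, $-2(r^2-1)$ for $(3,4)$, $r(r+1)$ for $(5,6)$ and $-2$ for $(7,8)$. A length-$2$ relation within such a pair forces $\zeta_{XY}^{\alpha_j-\alpha_i}\zeta_U^{\beta_j-\beta_i}=-1$, an explicit algebraic constraint on $(\zeta_{XY},\zeta_U)$; substituting this into the remaining six-term equation produces a shorter $K$-relation to which Theorem~\ref{Thm:DvZa} applies with a much smaller bound. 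A length-$2$ relation between terms with distinct coefficients forces a ratio of coefficients to lie in $\{\pm 1\}$, which by a short case analysis either pins $r$ to a rational value (excluded since $[\QQ(r):\QQ]=2$) or contradicts the hypothesis $\zeta_U\not\in\{\pm1\}$. The exclusion $(\zeta_{XY},\zeta_U)\neq(-1,\pm\imath)$ from Section~\ref{sec:Q11-16} is used exactly to discard the one continuous family of degenerate solutions which satisfies~\eqref{eq:Prym22} for every $r$.

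Finally, the implementation step mirrors the scheme described for $\Prym[3](2,1,1)$: for each admissible order $N$ from the finite list produced above and each pair $(e_{XY},e_U)$ with $\gcd(e_{XY},e_U,N)=1$, rewrite~\eqref{eq:Prym22} as a quadratic $ar^2+br+c=0$ with coefficients in $\QQ(\zeta_N)$, and record $(\zeta_{XY},\zeta_U,r)$ whenever the discriminant produces an $r$ lying in a real quadratic extension of $\QQ$ with $N^K_\QQ(r)<0$; the $\Gal(\QQ(\zeta_N)/\QQ)$-action substantially reduces the loop, exactly as in Section on $\Prym[3](2,1,1)$. The main obstacle compared to the previous case is the richer collinearity pattern of the exponent vectors, which forces one to dispose of a few $(4,4)$-partitions by hand (via resultants in $\zeta_{XY}$ or $\zeta_U$) before the enumeration becomes feasible; beyond this combinatorial bookkeeping no new ideas are needed.
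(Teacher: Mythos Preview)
Your proposal follows essentially the same strategy as the paper: the identification of the bad triples (all subsets of $\{3,4,7,8\}$ together with $(1,3,6)$ and $(2,4,5)$), the observation that in every atom-free partition into parts of length $\geq 3$ some part is good, and the gcd computation producing $d\in\{1,2,4\}$ and hence the bound $N\in\cNN(8)\cup 2\cdot\cNN(4)\cup 4\cdot\cNN(4)$ all match.

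There is one genuine gap in your length-$2$ analysis. Your dichotomy ``either pins $r$ to a rational value or contradicts $\zeta_U\notin\{\pm1\}$'' misses the case where the two coefficients are $-2(r^2-1)$ and $-2$: then the ratio condition gives $r^2-1=\pm1$, hence $r=\sqrt{2}$, which is not rational. The paper treats this explicitly: the pairs $(3,7)$ and $(4,8)$ force $\zeta_{XY}^2=-1$, after which the remaining equation becomes $3\zeta_U^8+2\zeta_U^4+3=0$ with no root-of-unity solution; the pairs $(3,8)$ and $(4,7)$ force $\zeta_{XY}=-1$ and then $\zeta_U=\pm i$, which is the excluded configuration. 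Also, your last paragraph is slightly off: the paper does \emph{not} dispose of the $(4,4)$-partitions by hand via resultants as in the $\Prym[3](2,1,1)$ case, because the $N=48$ solutions in the table actually arise from such partitions and cannot be eliminated; instead it simply enlarges the search space to include $2\cdot\cNN(4)\cup 4\cdot\cNN(4)$, exactly as you already obtained in your second paragraph.
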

\par
The proof is completely parallel to Theorem~\ref{thm:technical211}.
\par
\begin{Lemma} \label{lm:relation22:length2}
If a solution to the Equation~\eqref{eq:Prym22} 
has a sub-$K$-relation of length two, then the relation consists
of terms $(12)$, $(56)$ or $(78)$. Moreover, in this case all the solutions
of   Equation~\eqref{eq:Prym22} are of the form $(\zeta_{XY}, \zeta_U) = 
(\zeta_N^{e_{XY}}, \zeta_N^{e_U})$ 
for some $N \in \cNN(5)$.
\end{Lemma}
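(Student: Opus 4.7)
The plan is to imitate closely the proof of Lemma~\ref{lm:relation:length2}. Label the eight terms of Equation~\eqref{eq:Prym22} in the order they appear and denote their coefficients by $a_1,\ldots,a_8$, so that the coefficient pattern is
$$(\r(\r-1),\,\r(\r-1),\,-2(\r^2-1),\,-2(\r^2-1),\,\r(\r+1),\,\r(\r+1),\,-2,\,-2).$$
A length-two sub-$K$-relation $a_i\zeta_i + a_j\zeta_j = 0$ forces $-a_i/a_j$ to be a real root of unity, hence $-a_i/a_j \in \{\pm 1\}$.

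First I would dispose of all cross-pairs $(i,j)$ for which $a_i$ and $a_j$ do not already agree up to sign as polynomials in $\r$. Each such pair yields $a_i = \pm a_j$, a polynomial equation in $\r$; inspection case by case shows that these equations force $\r \in \QQ$ (contradicting $[\QQ(\r):\QQ]=2$) or produce non-real solutions for $\r$, with the sole exception of $(a_i,a_j)=(-2(\r^2-1),-2)$, which yields $\r=\sqrt{2}$. For this one exceptional branch I would substitute $\r=\sqrt{2}$ and the corresponding sign condition back into Equation~\eqref{eq:Prym22} and check, exactly as in the proof of Lemma~\ref{lm:relation:length2}, that the residual relation in $\zeta_{XY},\zeta_U$ admits only solutions with roots of unity of bounded order already covered by $\cNN(5)$; in particular, any candidate roots of unity on the unit circle have to be ruled out by a short explicit check.

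Next I would handle the four matching pairs $(1,2)$, $(3,4)$, $(5,6)$, $(7,8)$, for which $a_i=a_j$ and the relation reduces to $\zeta_j/\zeta_i=-1$. The pair $(3,4)$ forces $\zeta_{XY}=-1$; substituting back into Equation~\eqref{eq:Prym22} makes $(3,4)$ and $(7,8)$ pairwise cancel and reduces the full expression to $2\r(1-\zeta_U^4)=0$, hence $\zeta_U^4=1$. The remaining values $\zeta_U\in\{\pm 1,\pm\imath\}$ are all excluded by the hypotheses of the lemma together with the discussion of Section~\ref{sec:Q11-16}, so this pair contributes nothing. The three surviving pairs $(1,2)$, $(5,6)$, $(7,8)$ yield the constraints $\zeta_{XY}\zeta_U^4=-1$, $\zeta_{XY}=-\zeta_U^4$ and $\zeta_{XY}^3=-1$ respectively.

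For each surviving case I would substitute the constraint into Equation~\eqref{eq:Prym22}, clear denominators, and factor out the common trivial factor that appears (analogous to the factor $\zeta_{XY}^2-1$ in the proof of Lemma~\ref{lm:relation:length2}); for example in case $(5,6)$ a factor $\zeta_U^4-1$ drops out, whose vanishing precisely recovers the already-excluded configuration $(\zeta_{XY},\zeta_U)=(-1,\pm\imath)$. After this reduction the remaining equation has exactly five distinct monomials in one of the two root-of-unity variables, so Theorem~\ref{Thm:DvZa} applies and forces the order of that variable to lie in $\cNN(5)$, with the other variable determined algebraically by the length-two constraint. The main obstacle is the careful bookkeeping for the $\r=\sqrt{2}$ branch together with the verification that the three reduced equations really simplify to length five after removing their trivial factors; this is routine but tedious and parallels the corresponding step in the $\Prym[3](2,1,1)$ argument rather than introducing any new idea.
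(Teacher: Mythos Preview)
Your approach is the same as the paper's, and the overall structure is correct. Two points are glossed over in a way that matters.

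First, your parenthetical about case $(56)$ does not transfer to case $(78)$. There the constraint is $\zeta_{XY}^3=-1$, and the vanishing of the trivial factor (which is again $\zeta_U^4-1$ after eliminating $\zeta_{XY}$) does \emph{not} force $\zeta_{XY}=-1$: it also allows $\zeta_{XY}$ to be a primitive sixth root of unity with $\zeta_U=\pm\imath$, and this is a genuine solution (with $\r=\sqrt{2}/2$, appearing in the table of Theorem~\ref{thm:technical22} at $N=12$), not an excluded configuration. The paper deals with this by treating the case $\zeta_U^2=-1$ uniformly for all three surviving pairs \emph{before} passing to the reduced five-term equations; you should do the same, or at least note that in case $(78)$ the vanishing of the trivial factor still yields $N\mid 12\in\cNN(5)$.

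Second, the assertion that the reduced equation ``has exactly five distinct monomials'' presupposes that none of its coefficients vanishes. The paper checks this explicitly: the only real quadratic $\r$ annihilating a coefficient are $\r\in\{\tfrac{\sqrt{2}}{2},\tfrac{\sqrt{3}}{3},\tfrac{\sqrt{13}}{3}\}$, and for each of these one verifies by a finite search that the remaining shorter relation has no root-of-unity solution. You flag this as ``routine but tedious'' without doing it; it is routine, but it is a step, not just bookkeeping, since a vanishing coefficient changes the length of the relation to which Theorem~\ref{Thm:DvZa} is applied.
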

\begin{proof}
As in Lemma~\ref{lm:relation:length2} we use that 
\begin{equation} \label{eq:K2conseq22}
a_1\zeta_{XY}^{\alpha_1} \zeta_U^{\beta_1}+ a_2\zeta_{XY}^{\alpha_2} \zeta_U^{\beta_2} = 0,
\quad \text{implies} \quad 
-a_1/a_2=\zeta_{XY}^{\alpha_2-\alpha_1} \zeta_U^{\beta_2-\beta_1} \in \{\pm 1\}
\end{equation}
Only the polynomials $\{ -2,r(r-1),  r(r+1),-2(r^2-1)\}$ appear as
coefficients of the equation~\eqref{eq:Prym22}.
If $a_1 \neq a_2$ then solving~\eqref{eq:K2conseq22} for $r$ gives a solution 
in $\QQ$, except when $a_1=-2(r^2-1)$ and $a_2=-2$ 
(or vice versa). In this case $-a_1/a_2 = -(r^2-1)=-1$ and $r=\sqrt{2}$. 
>From terms (37) and (48) we deduce the equations
$\zeta_{XY}\zeta_{U}^2 + \zeta_{XY}^3\zeta_{U}^2=0$ and 
$\zeta_{XY}^2\zeta_{U}^2+\zeta_{U}^2=0$ respectively.  In these  two cases $\zeta_{XY}^2=-1$
and the equation reduces to $3\zeta_U^8 + 2\zeta_U^4 + 3=0$, which is possible
only if  $\tfrac{-1\pm 2\imath\sqrt{2}}{3}$ were a root of unity. A check
over the finitely many roots of unity that are quadratic shows that this is 
impossible. The remaining combinations (38) and (47) imply 
$\zeta_{XY}=-1$ and then also $\zeta_U = \pm i$, 
contradicting the hypothesis of Theorem~\ref{thm:technical22}.
\par
Hence it remains to treat the case where $a_1=a_2$. The partition (34) implies that
$\zeta_{XY}=-1$, contradiction. We discuss the remaining cases, the partitions
$(12)$, $(56)$ and $(78)$. Each of the remaining terms has a factor 
$(\zeta_U^2+1)(\zeta_U^2-1)$. If $\zeta_U^2 = -1$ then
Equation~\eqref{eq:Prym22} reduces to 
$$ \zeta_{XY}^3 + (2\r^2-1)\zeta_{XY}^2 + (2\r^2-1)\zeta_{XY} + 1\=0.$$
If $\r = \sqrt{2}/2$ then $\zeta_{XY}^3 = -1$, a solution that appears for $N=6$, 
see the table in Theorem~\ref{thm:technical22}. 
Otherwise, none of the coefficients of this 
relation of length three is zero. Hence the relation is irreducible, any
solution appears for $N \in \cNN(3)$. We analyze the remaining cases separately. They are
\begin{enumerate}
\item $\zeta_{XY} + \zeta_{XY}^2\zeta_{U}^4=0$, 
\item $\zeta_{XY}^2 + \zeta_{XY}\zeta_{U}^4 = 0$, 
\item $\zeta_{XY}^3\zeta_{U}^2 + \zeta_{U}^2=0$.
\end{enumerate}
In case $(1)$ Equation~\eqref{eq:Prym22} becomes, after taking out 
the factors we discussed,
$$
2\zeta_U^8 + r(r+1)\zeta_U^6 - 2(r^2-2)\zeta_U^4 + r(r+1)\zeta_U^2 + 2 \= 0.
$$
In case $(2)$ Equation~\eqref{eq:Prym22} is the preceding equation, with $\r$
replaced by $-\r$. Finally in case $(3)$, Equation~\eqref{eq:Prym22} becomes, 
after taking out the factors we discussed,
$$
r^2(3r^2+1)\zeta_U^8 - 12r^2(r^2-1)\zeta_U^6 + 2r^2(9r^2-13)\zeta_U^4 
- 12r^2(r^2-1)\zeta_U^2 + r^2(3r^2 + 1) \= 0
$$
If none of the coefficients is zero in any of the two preceding displayed equations, 
these are either irreducible (hence with a solution for $\zeta_U$ in for $N \in \cNN(5)$) or
reducible with a partition in two plus three terms, hence with a solution for $\zeta_U$ 
in for $N \in \cNN(3)$. Since $\zeta_{XY}$ is determined, being a power of $\zeta_U$, the
claim follows in this case. 
\par
The only cases of a real quadratic number annihilating one of the coefficients 
are $\r = \tfrac{\sqrt{2}}2$, $\r = \tfrac{\sqrt{3}}3$ 
and $\r = \tfrac{\sqrt{13}}3$. In each
case some of the solutions lie on the unit circle, but a loop over the
finitely many roots of unity of degree $16$ over $\QQ$ shows that these
solutions are not roots of unity. 
\end{proof}
\par
\begin{proof}[Proof of Theorem~\ref{thm:technical22}, finiteness statement]
The only triples (and four-tuples) that are not good are (labeling of the 
terms as they appear in~\eqref{eq:Prym22}) $(1,3,6)$, $(2,4,5)$ and all the 
triples contained in ${3,4,7,8}$ as well as the four-tuple $(3,4,7,8)$. 
By the preceding lemma we only need to deal with atom-free partitions of $(8)$
without a part of length two since the number of possibilities for 
$N \in \cNN(5)$ is finite. The only possibilities are $(5,3)$, $(4,4)$ and $(8)$. 
The relation of length $8$ is obviously good and by the preceding statement, 
if one of the relations of length $3$ resp.\ $4$ is bad, the complementary
relation of length $5$ or $3$ is good.
\end{proof}

\subsection{Implementation for the case $\Prym[3](2,2)$}

The basic algorithm of the preceding case applies here as well. We first
test all the possible choices for $(\zeta_{XY},\zeta_U)$ corresponding to 
an irreducible relation. Equation~\eqref{eq:Prym22} can also be written as
a quadratic polynomial in $\r$, the coefficients being polynomial
in $\zeta_{XY}$ and $\zeta_U$. Consequently, all the preceding remarks on whose
to efficiently test the cases apply here as well.
\par
The reduction step to the irreducible case works less well here, compared
to the case $\Prym[3](2,1,1)$. In fact, there are $56$ partitions 
into a triple and a relation of length $5$ where the
greatest common divisor of the two $d$ associated by Lemma~\ref{le:dgcd} 
with the triple and the $5$-tuple is equal to two. Moreover, there
are $64$ partitions into two four-tuples such that the corresponding 
greatest common divisor is two and, finally, there are $6$ such partitions 
where the greatest common divisor is equal four. 
\par
Not all of them can be ruled out as we did in Case~(4,4) for $\Prym[3](2,1,1)$, 
besides the fact that this is not feasible, since the solutions with
$N =48$ stem from such cases. Instead, we loop over all 
$N \in \cNN(8) \cup 2\cdot \cNN(4)\cup 4\cdot \cNN(4)$, as stated in
the theorem to cover all the cases.

\section{Finiteness of the input data for the Thurston-Veech construction}

In this section we complete the proof of the finiteness statements
in Theorem~\ref{thm:main} and Theorem~\ref{thm:main:2}.
First, we determine the heights of the cylinders of any suitable direction
and justify the norm condition we restricted ourselves to when tabulating
the solutions to the relations in roots of unity. 
The second statement is a finiteness result for relative periods. The third step 
is to argue via the Thurston-Veech construction that finiteness holds.

\subsection{Heights} \label{sec:heights}

The heights of the cylinders in a suitable direction are determined  
in the loci $\Prym[3](2,1,1)$ and $\Prym[3](2,2)$ by the widths, 
real multiplication and the Prym involution.
\par
\begin{Lemma} \label{le:heights}
Normalizing $r_1=1$ and writing $r_2 = a + b\sqrt{D}$, 
the tuple heights of the cylinders is proportional to 
$(h_1 = 1,h_2,h_3=h_1)$, where
$$
h_2 \= \frac{2a+2b\sqrt{D}}{Db^2-a^2}.
$$
\end{Lemma}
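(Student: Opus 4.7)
The plan is to combine the Prym symmetry with the eigenform condition expressed via Hodge orthogonality of the two eigenforms. First, since the Prym involution $\rho$ swaps the cylinders $C_1$ and $C_3$ and fixes $C_2$ (Convention~\ref{conv:prym211}), one has $w_1 = w_3$ and $h_1 = h_3$ automatically; normalize so that $h_1 = h_3 = 1$.

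The key input is the eigenform condition. Because $(X,\omega) \in \Omega\cEE_D(\kappa)$, the Prym variety carries real multiplication by $\fraco_D$, and $\omega$ is an eigenform with some eigenvalue $\lambda \in K = \QQ(\sqrt{D})$. Its Galois-conjugate eigenform $\omega^\sigma$ has eigenvalue $\lambda^\sigma \neq \lambda$, and together $\omega,\omega^\sigma$ span $H^{1,0}(X)^-$. Since real multiplication by a totally real field is fixed by the Rosati involution, the two eigenlines are orthogonal for the Hodge Hermitian form, so that
$$\int_X \omega \wedge \overline{\omega^\sigma} \;=\; 0.$$
I would then compute this integral using the Riemann bilinear relations in a symplectic basis of $H_1(X,\ZZ)^-$ adapted to the horizontal cylinder decomposition. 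Using that $\int_{\gamma_i}\omega = w_i$ while $\int_{\gamma_i}\omega^\sigma = w_i^\sigma$ (the Galois-conjugate real width, already visible at the level of the stable forms from the period description of Equation~\eqref{eq:intvector}, where the period vector has entries $r_i$ for $\omega$ and $r_i^\sigma$ for $\omega^\sigma$), the imaginary part of the orthogonality relation collapses to the single linear constraint
$$\sum_{i=1}^3 w_i^\sigma\, h_i \;=\; 0.$$
Substituting $w_1 = w_3 = 1$, $w_2 = r_2 = a + b\sqrt{D}$, and $h_1 = h_3 = 1$ gives $2 + (a - b\sqrt{D})h_2 = 0$, hence $h_2 = -2/(a-b\sqrt{D}) = 2(a+b\sqrt{D})/(Db^2-a^2)$, as claimed. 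The same argument applies verbatim to both $\Prym[3](2,1,1)$ and $\Prym[3](2,2)$ since the cylinder labelling convention is maintained in both cases.

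The main obstacle is the careful derivation of the sum relation $\sum w_i^\sigma h_i = 0$ from Hodge orthogonality: one must choose a symplectic basis compatible with the cylinder decomposition (accounting for possible homological relations among $\gamma_1,\gamma_2,\gamma_3$ in the anti-invariant part) and verify that the twist contributions (real parts of the transverse periods) cancel out of the imaginary part. Once the identification $\int_{\gamma_i}\omega^\sigma = w_i^\sigma$ on the core curves is in hand---which is already implicit in the description of the period map on the semi-abelian limit of the Prym variety, whose two $\CC^*$-coordinates correspond precisely to $\omega$ and $\omega^\sigma$---the remainder is direct bookkeeping.
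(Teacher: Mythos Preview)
Your proof is correct and uses essentially the same mechanism as the paper: the paper simply quotes the complex-flux condition from McMullen (\cite[Theorem~4.5]{mcmulleninfinite}), namely $\sum_i r_i h_i^\sigma = 0$, and solves for~$h_2$; your relation $\sum_i w_i^\sigma h_i = 0$ is its Galois conjugate and hence equivalent. The derivation you sketch from Hodge orthogonality $\int_X \omega \wedge \overline{\omega^\sigma}=0$ via Riemann bilinear relations is exactly the content of the cited theorem, so you are re-proving what the paper invokes as a black box.
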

\par
\begin{proof} We use the condition on the complex flux implied by complete
periodicity, which states (\cite[Theorem~4.5]{mcmulleninfinite}, see 
also \cite{CaltaSmillie}) that $\sum_{i=1}^k r_ih^\sigma_i=0$ for any 
direction that completely decomposes into $k$ cylinders.
In our setting, this amounts to $2r_1 h_1^\sigma + r_2h_2^\sigma = 0$
and this gives the above value.
\end{proof}
\par
\begin{Cor} \label{cor:normneg}
For any tuple $(r_1=1, r_2)$ of widths of cylinders
in a suitable direction on a Veech surface in $\Prym[3](2,1,1)$
or $\Prym[3](2,2)$ we have  $N^K_\QQ(r_2) <0$.
\end{Cor}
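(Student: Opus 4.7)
The plan is to observe that the formula for $h_2$ provided by Lemma~\ref{le:heights} essentially is $2r_2/(-N^K_\QQ(r_2))$, and to combine this with the positivity of widths and heights.

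More precisely, write $r_2 = a + b\sqrt{D}$ with $D$ the (positive) fundamental discriminant of $K$. The nontrivial Galois automorphism $\sigma$ of $K$ sends $\sqrt{D}$ to $-\sqrt{D}$, so
\[
N^K_\QQ(r_2) \;=\; r_2 \, r_2^\sigma \;=\; (a+b\sqrt{D})(a-b\sqrt{D}) \;=\; a^2 - D b^2.
\]
Substituting into the formula of Lemma~\ref{le:heights} one immediately obtains
\[
h_2 \;=\; \frac{2a + 2b\sqrt{D}}{Db^2 - a^2} \;=\; \frac{2\, r_2}{-N^K_\QQ(r_2)}.
\]

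Now $r_2 > 0$, since it is a cylinder width in the normalization $r_1 = 1$ forced by Convention~\ref{conv:prym211} (both $r_1$ and $r_2$ being real positive residues, proportional to the widths $w_1, w_2$ of the cylinders $C_1$ and $C_2$). Similarly $h_2 > 0$ as it is a cylinder height. The displayed equation therefore forces $-N^K_\QQ(r_2) > 0$, i.e., $N^K_\QQ(r_2) < 0$, as claimed. The argument is uniform in the two strata $\Prym[3](2,1,1)$ and $\Prym[3](2,2)$, since Lemma~\ref{le:heights} applies to both.

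There is essentially no obstacle: once one notices that $Db^2-a^2=-N^K_\QQ(r_2)$, the corollary is immediate from the sign conventions. The only subtlety worth flagging is that one is implicitly using that the height tuple from Lemma~\ref{le:heights} really is proportional to the geometric heights of cylinders on the Veech surface with all entries positive, which is built into the derivation of that lemma from the complex-flux condition of \cite[Theorem~4.5]{mcmulleninfinite}.
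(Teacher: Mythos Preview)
Your proof is correct and is essentially identical to the paper's: both rewrite the formula of Lemma~\ref{le:heights} as $h_2 = -2/r_2^\sigma = 2r_2/(-N^K_\QQ(r_2))$ (equivalently $h_1/h_2 = -r_2^\sigma/2$) and then conclude from $h_2>0$ and $r_2>0$. The only cosmetic difference is that the paper phrases the positivity via $h_1/h_2$ while you phrase it via $h_2$ directly.
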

\par
\begin{proof} Since  $h_1/h_2 = -r_2^\sigma/2>0$
and since $r_2>0$ we deduce $N^K_\QQ(r_2) <0$.
\end{proof}
\par

\subsection{Relative periods}

A saddle connection $\gamma$ is said to {\em represent a relative period},
if it joins a simple zero to the double zero in the locus $\Prym[3](2,1,1)$
or if it joins the two double zeros in the locus $\Prym[3](2,2)$. Note
that this is an abuse of terminology, since a saddle connection
joining the two simple zeros does not qualify.
\begin{Prop} 
\label{prop:irrallfinite}
There exists a finite set $\mathcal R \subset \PP(\RR^3)$ 
such that for any Veech surface 
$(X,\omega)\in\Prym[3](2,1,1)$ or in $\Prym[3](2,2)$, 
any suitable direction $\Theta$ on $X$
and any saddle connection $\gamma$ representing a relative period
the tuple $(r_1:r_2:|\gamma|)$ belongs to $\mathcal R$.
\end{Prop}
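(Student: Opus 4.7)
The plan is straightforward: combine the finiteness from Theorems~\ref{thm:technical211} and~\ref{thm:technical22} with an explicit evaluation of the saddle connection integral on the normalization of the stable curve.

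First, Theorems~\ref{thm:technical211} and~\ref{thm:technical22} leave only finitely many triples $(\zeta_{XY},\zeta_U,\r)$ with $r_1=1$ and $r_2=\r$. For each such triple the torsion condition~\eqref{eq:zetas} fixes $Y=\zeta_{XY}X$ and $U=\zeta_U$. Substituting into the opposite residue equations~\eqref{eq:oreqPrym211} respectively~\eqref{eq:oreq22} reduces to a polynomial equation in the single variable $X$, which has finitely many solutions. Hence the stable differential $\omega_\Theta$ on the normalization $\widetilde Y\cong\PP^1$ is determined up to finitely many choices, with cylinder widths proportional to $(r_1,r_2,r_1)$ by Convention~\ref{conv:prym211}.

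Next, a saddle connection $\gamma$ on $(X,\omega)$ in the direction $\Theta$ representing a relative period converges, under the degeneration $\diag(1,s)$ as $s\to 0$, to a path $\gamma_0$ on $\widetilde Y$ joining two zeros of $\omega_\Theta$, with $|\gamma|=|\int_{\gamma_0}\omega_\Theta|$. Since $\gamma$ lies along the boundary of one of the cylinders $C_i$, its length satisfies $|\gamma|\le\max_i w_i=\max(r_1,r_2)$, so after normalizing $r_1=1$ we have $|\gamma|\le\max(1,\r)$, which is uniformly bounded over the finite list of admissible $\r$. By Equation~\eqref{eq:relper} (respectively~\eqref{eq:relper22}) together with the torsion condition, the integral along the straight segment between the zeros on $\widetilde Y$ equals
$$
I_0 \,:=\, -r_1\log(\zeta_{XY}) + r_2\log(\zeta_U) \,\in\, 2\pi i\cdot\bigl(\QQ\cdot r_1+\QQ\cdot r_2\bigr),
$$
with rational coefficients read directly off the exponents $e_{XY},e_U$ in the tables of the two theorems. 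The remaining homotopy classes on $\widetilde Y\setminus\{\pm x,\pm y,\pm u\}$ differ from the straight one by integer multiples of the residues $2\pi i\cdot r_1$ and $2\pi i\cdot r_2$. Combined with the upper bound on $|\gamma|$, this confines the integer offsets to a finite set, so $|\gamma|/r_1$ takes only finitely many values and $(r_1:r_2:|\gamma|)$ belongs to a finite set $\mathcal R\subset\PP(\RR^3)$.

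The main obstacle, and the step I would develop most carefully, is the reduction of the homotopy class of $\gamma_0$ to a uniformly bounded set of integer offsets. Naively, since $\r$ is irrational, $\QQ\cdot r_1+\QQ\cdot r_2$ is dense in $\RR$, so the length bound alone is not obviously enough; the resolution is that $\gamma$ lies on a single cylinder boundary whose combinatorial type is determined by the (three-cylinder) decomposition of a suitable direction, and tracking which preimage of each node is traversed forces the integer offsets to lie in an explicit finite set indexed by this combinatorial data. Once this geometric bookkeeping is in place, the proof reduces to finite enumeration driven by the tables of solutions to the linear relations in roots of unity.
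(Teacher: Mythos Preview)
Your overall strategy matches the paper's: reduce to finitely many stable forms via Theorems~\ref{thm:technical211} and~\ref{thm:technical22} and the opposite residue equations, then control the integral of $\omega_\Theta$ along the limiting arc. The point of divergence is precisely the step you flag as the main obstacle, and your proposed resolution is not the one the paper uses.

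You are right that the length bound $|\gamma|\le\max(1,r)$ cannot by itself pin down the integer offsets $(k,\ell)$, since $\ZZ+\ZZ r$ is dense in $\RR$; the sentence in your second paragraph claiming it does should simply be dropped. Your suggested fix, tracking ``which preimage of each node is traversed'' via the combinatorics of the cylinder boundary, is plausible but left vague. The paper's argument is shorter and purely topological: a saddle connection is an \emph{embedded} arc, so on the normalization $\widetilde Y\cong\PP^1$ the limiting curve $\gamma_0$ is a simple arc from one zero to another. A simple arc is homotopic to the reference segment $\delta$ plus a collection of loops going \emph{once} around a subset of the poles $\{x,y,-x,-y,u,-u\}$; this already forces each winding contribution to be $0$ or $\pm 1$. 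One then observes that $\gamma$ is disjoint from its Prym image $\rho(\gamma)$ (except at the fixed endpoint in the $\Prym[3](2,1,1)$ case), so $\gamma_0$ cannot loop around both a pole and its $\rho$-image; this rules out, e.g., loops around both $x$ and $-x$. Combining, one obtains directly
\[
\int_\gamma\omega_\Theta \;=\; \int_\delta\omega_\Theta \;+\; k\,r_1 \;+\; \ell\,r_2,\qquad k\in\{-2,-1,0,1,2\},\quad \ell\in\{-1,0,1\},
\]
with no appeal to a length bound at all. The length constraint $0<|\gamma|<\max(1,r_2)$ is used only later, in the implementation, to discard some of these finitely many candidates.

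In short: your outline is correct, but replace the combinatorial bookkeeping you sketch with the two-line topological observation ``$\gamma$ is simple, and $\gamma\cap\rho(\gamma)$ consists only of endpoints''; this is what actually bounds $(k,\ell)$.
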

\par
\begin{proof}
We start with the case $\Prym[3](2,1,1)$.
First, the tuple $(r_1=1,r_2, \zeta_{XY}, \zeta_U)$ is constrained
in any suitable direction  by Equation~\eqref{eq:torsconseq211}, 
to which there are only finitely many solutions
by Theorem~\ref{thm:technical211}. We may  assume 
that the torsion order $N$ 
and  one of the finitely many  choices for $\zeta_{XY} = \zeta_N^{e_X}$ 
and $\zeta_U = \zeta_N^{e_U}$ as in the table in 
Theorem~\ref{thm:technical211} 
are fixed. Using Equation~\eqref{eq:substcoeffroots}, we see that this 
tuple also
determines $X,Y,U$ and hence $\omega_\Theta$ up to 
finite ambiguity. 
\par
Let $\delta$ be the path along the real axis from $z=0$ to $z=1$. 
Direct computation gives:
\begin{equation}
\begin{aligned}
 \int_\delta \omega_\Theta &\= r_1 \left[ \log \frac{z-x}{z+x} -  \log \frac{z-y}{z+y} \right]_0^1  + 
r_2 \left[\frac{z-u}{z+u} \right]_0^1 \\
& \= r_1 \log(\zeta^{-1}_{XY})  +r_2 \log(\zeta_U) = -r_1 \frac{e_X}{N} + r_2 \frac{e_U}{N}. \\
\end{aligned}
\end{equation}
\par 
The saddle connection $\gamma$ is a simple curve homotopic to $\delta$ up to a union 
of loops once around  a subset of the points 
$\{x,y,-x,-y,u,-u\}$. The residue of $\omega_\Theta$ around these points 
is $\pm r_1$ and $\pm r_2$ respectively. Since $\gamma$ is disjoint
from $\rho(\gamma)$ up to the endpoint, we do not need a loop both
around $x$ and $-x$ etc. We obtain
\begin{equation} \label{eq:relperfromdelta}
\int_\gamma \omega_\Theta  \= \int_\delta \omega_\Theta + k r_1 + \ell r_2, \quad
k \in \{-2,-1,0,1,2\}, \quad \ell \in \{-1,0,1\}.
\end{equation}
This is a finite list of possibilities.
\par
In the case $\Prym[3](2,2)$ similarly the knowledge of the roots of
unity given as the solutions of the equation in Theorem~\ref{thm:technical22}
determines the stable form by the calculation that lead to 
Equation~\eqref{eq:Prym22}.
\par
The computation of the relative period (as already given in~\eqref{eq:relper22})
yields 
\begin{equation} 
\begin{aligned}
 \int_{-1}^1 \omega_\Theta &\= 2r_1 \log\left(\frac{(1-x)(1+y)}{(1+x)(1-y)}\right)
+ 2r_2 \log \left(\frac{1-u}{1+u}\right) \\
& = 2r_1 \log(\zeta^{-1}_{XY})  + 2r_2 \log(\zeta_U) = -2r_1 \frac{e_X}{N} + 2r_2 \frac{e_U}{N}\, . 
\end{aligned}
\end{equation} 
The same argument as above, based on the observation that $\delta$
is a simple curve gives here again that Equation~\eqref{eq:relperfromdelta}
holds.
\end{proof}

\subsection{Finiteness of the possibilities for the intersection matrix}

We call a pair $(\Theta_1, \Theta_2)$ of suitable directions on
a Veech surface $(X,\omega) \in \Prym[3](2,1,1)$ 
{\em admissible}, if there exists a saddle connection $\beta$ in
the direction $\Theta_2$ that represents a relative period and
that crosses one of the cylinders of the direction $\Theta_1$ just once
and intersects no other cylinder of the direction $\Theta_1$.
Similarly, we call a Veech surface $(X,\omega)$ in $\Prym[3](2,2)$
{\em admissible}, if  $(\Theta_1, \Theta_2)$ are as above but now
we ask that $\beta$ crosses the fixed cylinder (called $C_2$) once
and crosses no other cylinder {\em or} that  $\beta$ crosses both
exchanged cylinders (called $C_1$ and $C_3$) once and crosses no other 
cylinder.
\par
We call a Veech surface $(X,\omega) \in \Prym[3](2,1,1)$ or 
in $\Prym[3](2,2)$ {\em normalized}, if the horizontal and
vertical direction are an admissible pair of suitable directions
and if the cylinder $C_1$ (according to Convention~\ref{conv:prym211})
has width $r_1=1$ and $h_1=1$.
\par
\begin{Lemma}
Any Veech surface $(X,\omega) \in \Prym[3](2,1,1)$ or 
in $\Prym[3](2,2)$ can be normalized.
\end{Lemma}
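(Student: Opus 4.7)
The plan is to produce the normalization in three stages: first exhibit a suitable horizontal direction $\Theta_1$, then exhibit a second suitable direction $\Theta_2$ together with a saddle connection witnessing admissibility, and finally use the two-dimensional freedom coming from the $\SL_2(\RR)\times \RR_{>0}$ action that stabilizes a pair of transverse directions to put the labeled cylinder into the desired shape.

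For the first stage, recall that on any translation surface there exists at least one saddle connection joining any two prescribed singularities (by the standard compactness/exhaustion argument for segments emanating from a zero). So I pick a saddle connection $\alpha_1$ joining the double zero to a simple zero in the case of $\Prym[3](2,1,1)$, and in the case of $\Prym[3](2,2)$ a saddle connection joining the two double zeros and passing through a fixed point of $\rho$ at its midpoint (which exists by the same argument applied to geodesics through the Weierstrass point). By Lemma~\ref{le:goodtopo211}, respectively Lemma~\ref{le:goodtopo22}, the direction of $\alpha_1$ is suitable. After rotating $(X,\omega)$ by an element of $\SO(2)$, I may assume $\alpha_1$ is horizontal; this gives a horizontal cylinder decomposition into $C_1,C_2,C_3$ with the convention $C_2$ fixed by $\rho$ and $C_1,C_3$ exchanged.

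For the second stage, I need a saddle connection $\beta$ representing a relative period whose direction is transverse to horizontal and which is admissible in the precise sense of Section~\ref{sec:heights}'s definition. Among the countably many saddle connections representing relative periods, their directions accumulate only at horizontal, so infinitely many lie in non-horizontal directions. I look for one that traverses the horizontal decomposition with the minimal amount of crossing: either stays inside a single horizontal cylinder and exits on the opposite boundary, or (in the $\Prym[3](2,2)$ case) crosses the pair $C_1$ and $C_3$ exactly once each. Geometrically, starting at the required endpoint zero on a boundary of some $C_i$ and choosing the steepest direction along which a short straight trajectory hits the required target zero on the opposite boundary without crossing any other horizontal leaf of singularities produces such a $\beta$; its direction is suitable by the same Lemma applied to the endpoints. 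This is the step I expect to be the main obstacle, because it requires a careful case analysis of where the zeros sit on the boundary components of the $C_i$ and an appeal to the fact that a Veech surface has enough saddle connections between each pair of zeros to realize the desired crossing pattern; in the symmetric $\Prym[3](2,2)$ case the admissibility definition is correspondingly weaker, which is what allows it to go through there as well.

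For the third stage, after rotating once more so that $\alpha_1$ stays horizontal and applying a horizontal shear $\left(\begin{smallmatrix}1&t\\0&1\end{smallmatrix}\right)$ with the unique $t$ making $\beta$ vertical (this preserves the horizontal decomposition and hence preserves suitability of $\Theta_1$), I obtain an admissible pair $(\Theta_1,\Theta_2)$ equal to the horizontal and vertical directions. The stabilizer of this pair in the affine group of $(X,\omega)$ is generated by the diagonal matrices $\operatorname{diag}(\lambda,\lambda^{-1})$ and the positive real rescalings of $\omega$, together giving a two-parameter family acting on $(r_1,h_1)$ as $(r_1,h_1)\mapsto (s\lambda\, r_1,\, s\lambda^{-1} h_1)$. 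Solving the two scalar equations $s\lambda r_1=1$ and $s\lambda^{-1} h_1=1$ yields the unique $(\lambda,s)$ that normalizes $r_1=h_1=1$, completing the proof.
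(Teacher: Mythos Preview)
Your approach is the same as the paper's: exhibit a suitable $\Theta_1$ via Lemma~\ref{le:goodtopo211} or~\ref{le:goodtopo22}, cross a horizontal cylinder by a short saddle connection $\beta$ to produce $\Theta_2$, and normalize using the $\GL_2(\RR)$-action. The paper dispatches this in three sentences; your outline is correct but you overcomplicate the middle step and leave a gap there in the $\Prym[3](2,2)$ case.

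The gap is this: you claim the direction of $\beta$ is suitable ``by the same Lemma applied to the endpoints'', but Lemma~\ref{le:goodtopo22} requires more than that $\beta$ join the two double zeros --- it requires $\beta$ to pass through a fixed point of~$\rho$ (this is precisely what rules out the two-cylinder degeneration in its proof). Your construction of $\beta$ as a segment across ``some $C_i$'' does not ensure this, and a $\beta$ crossing only $C_1$ would not even satisfy the admissibility condition you quote. The fix, which makes the ``case analysis'' you anticipate entirely unnecessary, is to take the cylinder to be $C_2$. Since $\rho$ fixes $C_2$ setwise and acts on it as a half-turn exchanging the two boundary circles, the zeros $P$ and $Q=\rho(P)$ lie on opposite boundaries of~$C_2$; the straight segment in $C_2$ from an occurrence of $P$ to the corresponding occurrence $\rho(P)$ of $Q$ is then $\rho$-invariant and passes through the center of $C_2$, a fixed point of~$\rho$. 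Admissibility (crossing $C_2$ once) and suitability of $\Theta_2$ follow immediately. This is what the paper means by ``take a cylinder with two different double zeros on its boundaries'', and what it later makes explicit in Lemma~\ref{le:22crossC1}.

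A minor terminological point on your third stage: what you want is the stabilizer of the pair of directions in $\GL_2^+(\RR)$, not in the affine group of $(X,\omega)$, which is discrete. The paper simply invokes transitivity of $\GL_2(\RR)$.
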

\par
\begin{proof} A suitable direction $\Theta_1$ exists by
Lemma~\ref{le:goodtopo211} resp.\ Lemma~\ref{le:goodtopo22}. 
Now it suffices to take a cylinder
in the direction $\Theta_1$ with zero of different orders
(resp.\ with two different double zeros) on its boundaries.
Take $\beta$ to be the saddle connection joining these zeros
and $\Theta_2$ to be the direction of $\beta$. Obviously the
pair $(\Theta_1, \Theta_2)$ is admissible. The normalization
is an obvious consequence of the transitivity of the action 
of $\GL_2(\RR)$.
\end{proof}
\par
We denote by $Z_j$, $j=1,2,3$ the cylinders of the vertical direction, 
also labeled according to Convention~\ref{conv:prym211}. The goal of
this section is the following finiteness statement for
the intersection matrix.
\par
\begin{Prop} 
\label{prop:intmatfinite}
Suppose that $(X,\omega)$ is normalized.
There is only a finite number of possibilities for the heights
and widths $w(C_i), h(C_i), w(Z_i), h(Z_i)$ of the cylinders
in the horizontal and vertical direction as well as for 
the intersection matrix $M = (M_{ij})$ where $M_{ij} = C_i\cdot Z_j$ 
is the geometric intersection
number of horizontal and vertical cylinders.
\end{Prop}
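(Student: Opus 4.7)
The plan is to determine in turn the horizontal widths and heights, then the vertical widths, then the intersection matrix $M$, and finally the vertical heights, showing that each datum varies in a finite set.

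\emph{Step 1 (horizontal data).} The normalization forces $w(C_1)=r_1=1$ and $h(C_1)=1$. By Theorem~\ref{thm:technical211} (in $\Prym[3](2,1,1)$) or Theorem~\ref{thm:technical22} (in $\Prym[3](2,2)$), the remaining horizontal width $r_2=w(C_2)$ lies in one of finitely many real quadratic numbers, and Lemma~\ref{le:heights} then expresses $h(C_2)$ explicitly in terms of $r_2$. Thus the triple of horizontal widths and heights lies in a finite set.

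\emph{Step 2 (vertical widths).} The vertical direction $\Theta_2$ is again suitable by the normalization hypothesis, so the same torsion analysis, now applied in the vertical direction, gives $(w(Z_1):w(Z_2):w(Z_3)) = w(Z_1)\cdot(1,r_2^v,1)$ with $r_2^v$ in a finite list of real quadratic numbers. What remains is to pin down the overall scale $w(Z_1)$. Here the admissibility hypothesis is essential: the admissible saddle connection $\beta$ is vertical and crosses prescribed horizontal cylinders exactly once each, so Step~1 reads off $|\beta|$ from horizontal data:
\begin{equation*}
|\beta| \= h(C_{i_0}) \quad \text{in } \Prym[3](2,1,1),
\qquad
|\beta| \in \{h(C_2),\, 2h(C_1)\} \quad \text{in } \Prym[3](2,2).
\end{equation*}
Since $\beta$ represents a relative period in the vertical direction, Proposition~\ref{prop:irrallfinite} applied to $\Theta_2$ yields
\begin{equation*}
(w(Z_1):w(Z_2):|\beta|) \,\in\, \mathcal R,
\end{equation*}
a finite set of projective triples. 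Combined with the now-known value of $|\beta|$, this pins $w(Z_1)$, and hence all three $w(Z_j)$, to a finite set.

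\emph{Step 3 (intersection matrix and vertical heights).} By definition, $M_{ij}$ equals the number of rectangles in $C_i\cap Z_j$, and decomposing each horizontal and vertical cylinder into these rectangles yields the two families of linear relations
\begin{equation*}
w(Z_j) \= \sum_{i=1}^{3} M_{ij}\, h(C_i), \qquad w(C_i) \= \sum_{j=1}^{3} M_{ij}\, h(Z_j).
\end{equation*}
In the first family the left-hand sides and the coefficients $h(C_i)$ are now known positive numbers from Steps~1 and~2, so the non-negativity $M_{ij}\ge 0$ together with $M_{ij}\le w(Z_j)/h(C_i)$ leaves only finitely many integer matrices $M$. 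Once $M$ is fixed, the second family becomes a linear system for the two unknowns $h(Z_1)$ and $h(Z_2)$ (using $h(Z_3)=h(Z_1)$) with known coefficients, determining the remaining data.

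\emph{Main obstacle.} The only delicate point is Step~2: one must recognize that the definition of admissibility was tailored precisely so that $|\beta|$, a quantity a priori pertaining to the vertical direction, becomes expressible directly through horizontal data from Step~1, which is what converts the projective information from Theorems~\ref{thm:technical211}/\ref{thm:technical22} into absolute widths. Everything else reduces to finiteness of non-negative integer solutions of linear systems with positive coefficients.
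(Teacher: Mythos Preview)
Your argument is correct and follows the same broad outline as the paper: Steps~1 and~2 are essentially identical to the paper's proof. The difference lies in Step~3.

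The paper exploits that the intersection numbers $M_{ij}$ are rational integers, hence Galois-invariant, to derive
\[
\begin{pmatrix} w(Z_1) & w(Z_1)^\sigma \\ w(Z_2) & w(Z_2)^\sigma \end{pmatrix}
\;=\; (M^{\rm red})^T
\begin{pmatrix} h(C_1) & h(C_1)^\sigma \\ h(C_2) & h(C_2)^\sigma \end{pmatrix},
\]
which determines $M^{\rm red}$ \emph{uniquely} (since $(h(C_1),h(C_2))$ is a $\QQ$-basis of $K$), and then recovers the full $M$ from $M^{\rm red}$ by positivity and integrality of the individual entries. Your crude bound $M_{ij}\le w(Z_j)/h(C_i)$ is perfectly adequate for the finiteness statement, but it produces many spurious candidate matrices; the Galois-conjugation trick is what makes the effective implementation in Section~\ref{sec:implementation} feasible.

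One small point to tighten: in your Step~3 you assert that the system $w(C_i)=\sum_j M_{ij}h(Z_j)$ ``determines'' the vertical heights, which implicitly uses that $M^{\rm red}$ is invertible. For the \emph{actual} intersection matrix this holds (because $(w(Z_1),w(Z_2))$ is a $\QQ$-basis of $K$, forcing the columns of $(M^{\rm red})^T$ to be independent), but not for every non-negative integer matrix satisfying your bound. A cleaner route, and the one the paper takes, is to observe that Lemma~\ref{le:heights} applies equally to the vertical direction, giving the ratio $h(Z_2)/h(Z_1)$ directly from $w(Z_2)/w(Z_1)$; the area equality $\sum_i w(C_i)h(C_i)=\sum_j w(Z_j)h(Z_j)$ then fixes the scale. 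This pins down the $h(Z_j)$ independently of~$M$ and removes the need to justify invertibility.
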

\par
As preparation, recall that the obvious properties
$$
w(Z_j) \= \sum_{i=1}^3 M_{ij} h(C_i) \qquad \textrm{and} \qquad 
w(C_i) \= \sum_{j=1}^3 M_{ij} h(Z_j)
$$
are expressed in terms of the matrix $M = (M_{ij})$ as 
$$
w(Z)\=M^T\cdot h(C) \qquad \textrm{and} \qquad w(C)\=M\cdot h(Z).
$$
The point here is that $M$ is never invertible. But we can 
defined a reduced intersection matrix, that turns out to be invertible.
We define the reduced intersection matrix to be 
$M^{\rm red}  = \left( \begin{smallmatrix} M_{11}+M_{13} & 2M_{12} \\
M_{21} & M_{22} \\ \end{smallmatrix} \right)$,
and 
since $M_{11} = M_{33}$, $M_{13} = M_{31}$, $M_{21}  = M_{23}$ and $M_{12} = M_{32}$,
the preceding equation reads
\begin{equation}
\label{eq:galois:intersection:matrix}
w(Z)^{\rm red}\=(M^{\rm red})^T\cdot h(C)^{\rm red}, \qquad \textrm{where }
\left( \begin{smallmatrix} a \\ b \\a \end{smallmatrix}\right)^{\rm \red} \= 
\left( \begin{smallmatrix} a \\ b \end{smallmatrix}\right).
\end{equation}
\par
\begin{proof}[Proof of Proposition~\ref{prop:intmatfinite}]
Given a horizontal and a vertical irreducible direction, we 
may fix one of the finitely many choices for $r_2$ given by Proposition~\ref{prop:irrallfinite} and the heights are given by Lemma~\ref{le:heights}.
The projective tuple $(w(Z_1) : w(Z_2) : |\beta|)$
is one of the finitely many tuples appearing according to
Proposition~\ref{prop:irrallfinite}. Since $|\beta| = 1$ or
$|\beta| = h_2$ according to the definition of an admissible direction, 
it suffices to prove finiteness for a fixed 
tuple  $(w(Z_1),  w(Z_2),  |\beta|)$. By
 Lemma~\ref{le:heights} again this determines also the $h(Z_i)$.
\par
\medskip
We now use that the intersection numbers are integral, in particular
invariant under Galois conjugation. The pairs $(w(C_1), w(C_2)$
and hence also $(w(Z_1), w(Z_2)$ are a $\QQ$-basis of $K$.
The same applies to the vertical direction. Consequently, the system
\begin{equation} \label{eq:computeMred}
\left(\begin{matrix}
w(Z_1) & w(Z_1)^\sigma \\
w(Z_2) & w(Z_2)^\sigma \\
\end{matrix}
\right) \= 
(M^{\rm red})^T\cdot
\left(\begin{matrix}
h(C_1) & h(C_1)^{\sigma} \\
h(C_2) & h(C_2)^\sigma \\
\end{matrix}
\right).
\end{equation}
can be solved uniquely
for the matrix $M^{\rm \red}$, for any of the fixed possibilities 
for $(h_1,h_2,w(Z_1),w(Z_2))$. 
By integrality and positivity, 
there is only a finite number of possibilities for $M_{11}$
and $M_{13}$ given $M_{11}+M_{13}$. Together with the conditions stated
at the beginning of the proof, this determines the intersection
matrix completely.
\end{proof}
\par
\begin{proof}[Proof of Theorem~\ref{thm:main} and 
Theorem~\ref{thm:main:2}, finiteness statement]
We recall that by the Thurston-Veech construction 
(see e.g.\ \cite{mcmullenprym})
a Veech surface with periodic horizontal and vertical directions
of given heights $h(C_i)$ and $h(Z_j)$ is composed of rectangles
of size $h(C_i) \times h(Z_j)$. The number of such rectangles is
bounded by the total sum of the entries of the intersection matrix.
All these quantities are finite by Proposition~\ref{prop:intmatfinite}.
\end{proof}

\subsection{Implementation}
\label{sec:implementation}
The finiteness proof above is constructive and can be implemented
by performing in the case $\Prym[3](2,1,1)$ for each of the 
fields $\QQ(\sqrt{D_0})$ where\footnote{Note that so far we have not
been specifying the order of real multiplication, just the field and
we specify it by  a square-free integer~$D_0$.}  
$D_0\in\{2,3,6,33\}$ as appearing in table in 
Theorem~\ref{thm:technical211} 
resp.\ in the case $\Prym[3](2,2)$  for the $D_0\in \{2,3,6,33\}$  
as they appear in table in Theorem~\ref{thm:technical22} 
the following steps.
\begin{enumerate}
\item \label{s2} Compute the complete geometry in each suitable direction, 
i.e.\ for every solution according to the table 
with the given~$D_0$ store the possible triples
$(r_1=1,r_2,\gamma)$ in a list $\mathcal R$, where 
$$
\quad \quad \quad |\gamma| =  - \frac{e_X}{N} + r_2 \frac{e_U}{N} + k + \ell r_2, \quad \text{for} \quad 
k \in \{-2,-1,0,1,2\}, \quad \ell \in \{-1,0,1\}
$$
for  $\Prym[3](2,1,1)$ and
$$
\quad \quad \quad |\gamma| =  - 2\frac{e_X}{N} + 2r_2 \frac{e_U}{N} + k + \ell r_2, \quad \text{for} \quad 
k \in \{-2,-1,0,1,2\}, \quad \ell \in \{-1,0,1\}
$$
for  $\Prym[3](2,2)$ respectively. Moreover, $\gamma$ 
satisfies $0< |\gamma| <\max\{1,r_2\}$, since we
may assume that $\gamma$ is a saddle connection on the boundary of
one of the cylinders. Store also $(h_1,h_2)$ computed 
according to Lemma~\ref{le:heights}.
\medskip
\item For any pair of tuples in $\mathcal R$ normalize the first element of
the pair to be equal to 
$(r_1=1, r_2, h_1=1, h_2)$ (forgetting about the saddle connection) 
and normalize the second tuple 
to be  $(w(Z_1), w(Z_2), |\beta|, h(Z_1),h(Z_2))$ with 
either  
$$
|\beta| \= h_1 \qquad \textrm{or} \qquad |\beta| \= h_2
$$
corresponding to the cases  if $|\beta|$ crosses $C_1$ or $C_2$ 
for  $\Prym[3](2,1,1)$ and
$$
|\beta| \= 2h_1 \qquad \textrm{or} \qquad |\beta| \= h_2
$$
corresponding to the cases  if $|\beta|$ crosses $C_1$ or $C_2$ 
for  $\Prym[3](2,2)$ respectively. Compute $M^\red$ according to \eqref{eq:computeMred} and store
the matrix if it is non-negative integral and $(M^{\rm red})_{12}\equiv 0 \mod 2$.
\end{enumerate}
\par
\smallskip
The list of cases for $\Prym[3](2,1,1)$ is reduced even more by the 
following constraint, that is easily checked using the complete list 
of configurations given in Figure~\ref{fig:SuitableDir211}.
\par
\begin{Lemma} \label{le:211crossC1}
Suppose that $(X,\omega) \in \Prym[3](2,1,1)$ has a horizontal direction, 
normalized according to Convention~\ref{conv:prym211}. Then there is 
a suitable direction with a saddle connection $\beta$ contained 
in $C_1$ crossing this cylinder once.
\end{Lemma}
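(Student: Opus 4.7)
The plan is to enumerate the finite list of topological configurations of a suitable horizontal direction catalogued in Figure~\ref{fig:SuitableDir211} and, in each case, to exhibit by inspection a saddle connection $\beta$ from the double zero to a simple zero that is contained in the closure of $C_1$ and crosses the core curve of $C_1$ exactly once.

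First I would spell out what the figure encodes. By Lemma~\ref{le:goodtopo211}, the horizontal decomposition consists of three cylinders $C_1,C_2,C_3$ with $C_2$ fixed and $C_1,C_3$ exchanged by the Prym involution $\rho$. Since the double zero is fixed by $\rho$ whereas the two simple zeros are interchanged, the double zero must lie on the $\rho$-invariant part of the boundary, i.e.\ on $\partial C_2$ and on saddle connections shared by $\partial C_1$ and $\partial C_3$. For each topological model one reads off how the double zero and the two simple zeros are distributed on the top and bottom sides of the rectangle $C_1$, locates a pair consisting of the double zero on one horizontal side and a simple zero on the opposite horizontal side that are mutually visible through the interior of $C_1$, and takes $\beta$ to be the straight segment realizing this visibility. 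By construction $\beta \subset \overline{C_1}$ and crosses the core curve of $C_1$ exactly once.

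Having produced $\beta$ in each case, the direction $\Theta$ of $\beta$ is automatically suitable: $\beta$ is a saddle connection on a Veech surface joining the double zero to a simple zero, so Lemma~\ref{le:goodtopo211} applies to $\Theta$ and guarantees that the associated stable curve is an irreducible rational curve.

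The only real obstacle is the bookkeeping: one must verify for every single model in Figure~\ref{fig:SuitableDir211} that at least one simple zero appears on a horizontal side of $C_1$ opposite to some occurrence of the double zero, so that the required visibility actually holds. Because the $\rho$-equivariance forces a simple zero adjacent to $C_1$ to have its $\rho$-image adjacent to $C_3$, this will be straightforward to rule in by direct inspection of the finite list, but there is no shortcut avoiding that case-by-case check.
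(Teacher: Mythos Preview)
Your proposal is correct and matches the paper's approach exactly: the paper simply states that the lemma ``is easily checked using the complete list of configurations given in Figure~\ref{fig:SuitableDir211}'', and you have spelled out what that check consists of --- locating, in each of the ten separatrix diagrams, an occurrence of the double zero on one horizontal side of $C_1$ and a simple zero on the opposite side, then invoking Lemma~\ref{le:goodtopo211} to conclude suitability. Your remark that any two vertices on opposite sides of a cylinder are joined by a segment crossing the core curve once is the right justification that visibility is automatic, so the only content is indeed the bookkeeping you describe.
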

\par 
The corresponding statement for $\Prym[3](2,2)$ is even easier to
obtain. Note that $C_2$, being fixed by the Prym involution, contains
a fixed point of the  Prym involution in its interior. This 
proves the following lemma. 
\par
\begin{Lemma} \label{le:22crossC1}
Suppose that $(X,\omega) \in \Prym[3](2,2)$ has a horizontal direction, 
normalized according to Convention~\ref{conv:prym211}. Then there is 
a suitable direction with a saddle connection $\beta$ contained 
in $C_2$ crossing this cylinder once.
\end{Lemma}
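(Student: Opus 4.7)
The plan is to unpack the hint: $C_{2}$ carries an orientation-reversing involution induced by $\rho$, which must have fixed points in its interior, and any line through such a fixed point meeting a double zero on the boundary of $C_{2}$ extends by $\rho$-symmetry to a saddle connection joining the two double zeros with the fixed point as midpoint. To verify the hint itself, note that since $\rho$ preserves $C_{2}$ and satisfies $\rho^{*}\omega = -\omega$, in the standard cylinder coordinates $C_{2}\cong [0,w]\times[0,h]/\!\sim$ the restriction $\rho|_{C_{2}}$ is forced to have the form $(x,y)\mapsto (c-x,\,h-y)$ for some constant~$c$. The fixed points of this involution lie on the core curve $y=h/2$ at the two positions $x\equiv c/2 \pmod{w/2}$, hence in particular in the interior of $C_{2}$; fix one of them and call it~$P$.

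Next, I would produce the desired saddle connection. The horizontal boundary components of $C_{2}$ are unions of horizontal saddle connections whose endpoints must be among the two double zeros of $\omega$, so at least one double zero $z$ lies on the top boundary of~$C_{2}$. Choosing the representative of~$z$ on the line $y=h$ whose $x$-coordinate $x_{z}$ is closest to $c/2$, and setting $\tan\theta = (x_{z}-c/2)/(h/2)$, the straight ray from~$P$ in direction~$\theta$ stays inside~$C_{2}$ (it is monotone in the $y$-coordinate and does not wrap around horizontally) and terminates at~$z$. Applying $\rho$, the opposite ray from~$P$ in direction $\theta+\pi$ stays in $C_{2}$ and terminates at $\rho(z)$ on the bottom boundary, at the same distance. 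Concatenation gives a saddle connection $\beta$ from $\rho(z)$ through $P$ to $z$, contained in $C_{2}$ and crossing it exactly once, from the bottom boundary to the top boundary through the core curve at~$P$.

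Finally, $\beta$ joins the two double zeros and has the $\rho$-fixed point~$P$ as its midpoint, so Lemma~\ref{le:goodtopo22} implies that the direction of~$\beta$ is suitable, which is exactly what is required. The only delicate step of the argument is ensuring that the straight ray from~$P$ does not wrap around $C_{2}$ horizontally before terminating at~$z$; this is handled by the choice of the representative of~$z$ closest to~$c/2$, which forces $|\tan\theta|\le w/h$ and monotone $y$-dependence.
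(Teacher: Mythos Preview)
Your argument is correct and follows the same idea as the paper's one-line justification, namely that $C_{2}$, being $\rho$-invariant, contains a fixed point of $\rho$ in its interior; you have simply supplied the details (the explicit form of $\rho|_{C_{2}}$, the location of the fixed points on the core curve, and the non-wrapping segment to a boundary zero extended by $\rho$-symmetry). The only phrasing to adjust is that ``$\beta$ joins the two double zeros'' should read ``$\beta$ joins a double zero to its $\rho$-image through a fixed point of $\rho$'', which is exactly the hypothesis of Lemma~\ref{le:goodtopo22}.
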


\begin{figure}[htbp]
\begin{center}
\begin{tikzpicture}[scale=0.8, every node/.style={transform shape}]


\draw (0,25) rectangle (4,26) node [midway] {$C_1$}; 
\draw (5,25) rectangle (6,26) node [midway] {$C_2$};
\draw (7,25) rectangle (11,26) node [midway] {$C_3$};
\draw (0,25) -- (1,25) node [midway, below] {$5$};
\draw (1,25) -- (2,25) node [midway, below] {$4$};
\draw (2,25) -- (3,25) node [midway, below] {$7$};
\draw (3,25) -- (4,25) node [midway, below] {$6$};
\draw (5,25) -- (6,25) node [midway, below] {$1$};
\draw (7,25) -- (10,25) node [midway, below] {$2$};
\draw (10,25) -- (11,25) node [midway, below] {$3$};
\draw (0,26) -- (1,26) node [midway, above] {$1$};
\draw (1,26) -- (4,26) node [midway, above] {$2$};
\draw (5,26) -- (6,26) node [midway, above] {$3$};
\draw (7,26) -- (8,26) node [midway, above] {$4$};
\draw (8,26) -- (9,26) node [midway, above] {$5$};
\draw (9,26) -- (10,26) node [midway, above] {$6$};
\draw (10,26) -- (11,26) node [midway, above] {$7$};
\draw[fill=black] (5,25) circle (3pt) (6,25) circle (3pt)
                  (7,25) circle (3pt) (10,25) circle (3pt)
                  (11,25) circle (3pt) (0,26) circle (3pt)
                  (1,26) circle (3pt) (4,26) circle (3pt)
                  (5,26) circle (3pt) (6,26) circle (3pt);
\draw (0,25) circle (3pt) (2,25) circle (3pt)
      (4,25) circle (3pt) (8,26) circle (3pt)
      (10,26) circle (3pt);
\draw (1,25) node {$\times$} (3,25) node {$\times$}
      (7,26) node {$\times$} (9,26) node {$\times$}
      (11,26) node {$\times$};

\draw (0,22.5) rectangle (4,23.5) node [midway] {$C_1$}; 
\draw (5,22.5) rectangle (6,23.5) node [midway] {$C_2$};
\draw (7,22.5) rectangle (11,23.5) node [midway] {$C_3$};
\draw (0,22.5) -- (1,22.5) node [midway, below] {$5$};
\draw (1,22.5) -- (2,22.5) node [midway, below] {$1$};
\draw (2,22.5) -- (3,22.5) node [midway, below] {$7$};
\draw (3,22.5) -- (4,22.5) node [midway, below] {$3$};
\draw (5,22.5) -- (6,22.5) node [midway, below] {$4$};
\draw (7,22.5) -- (10,22.5) node [midway, below] {$2$};
\draw (10,22.5) -- (11,22.5) node [midway, below] {$6$};
\draw (0,23.5) -- (1,23.5) node [midway, above] {$1$};
\draw (1,23.5) -- (4,23.5) node [midway, above] {$2$};
\draw (5,23.5) -- (6,23.5) node [midway, above] {$3$};
\draw (7,23.5) -- (8,23.5) node [midway, above] {$4$};
\draw (8,23.5) -- (9,23.5) node [midway, above] {$5$};
\draw (9,23.5) -- (10,23.5) node [midway, above] {$6$};
\draw (10,23.5) -- (11,23.5) node [midway, above] {$7$};
\draw[fill=black] (0,22.5) circle (3pt) (3,22.5) circle (3pt)
                  (4,22.5) circle (3pt) (5,22.5) circle (3pt)
                  (6,22.5) circle (3pt) (5,23.5) circle (3pt)
                  (6,23.5) circle (3pt) (7,23.5) circle (3pt)
                  (8,23.5) circle (3pt) (11,23.5) circle (3pt);
\draw (2,22.5) circle (3pt) (7,22.5) circle (3pt)
      (11,22.5) circle (3pt) (1,23.5) circle (3pt)
      (10,23.5) circle (3pt);
\draw (1,22.5) node {$\times$} (10,22.5) node {$\times$}
      (0,23.5) node {$\times$} (9,23.5) node {$\times$};

\draw (0,20) rectangle (3,21) node [midway] {$C_1$}; 
\draw (5,20) rectangle (6,21) node [midway] {$C_2$};
\draw (7,20) rectangle (10,21) node [midway] {$C_3$};
\draw (0,20) -- (1,20) node [midway, below] {$2$};
\draw (1,20) -- (2,20) node [midway, below] {$6$};
\draw (2,20) -- (3,20) node [midway, below] {$3$};
\draw (5,20) -- (6,20) node [midway, below] {$1$};
\draw (7,20) -- (8,20) node [midway, below] {$5$};
\draw (8,20) -- (9,20) node [midway, below] {$7$};
\draw (9,20) -- (10,20) node [midway, below] {$4$};
\draw (0,21) -- (1,21) node [midway, above] {$1$};
\draw (1,21) -- (2,21) node [midway, above] {$2$};
\draw (2,21) -- (3,21) node [midway, above] {$3$};
\draw (5,21) -- (6,21) node [midway, above] {$4$};
\draw (7,21) -- (8,21) node [midway, above] {$5$};
\draw (8,21) -- (9,21) node [midway, above] {$6$};
\draw (9,21) -- (10,21) node [midway, above] {$7$};
\draw[fill=black] (1,20) circle (3pt) (2,20) circle (3pt)
                  (8,20) circle (3pt) (2,21) circle (3pt)
                  (8,21) circle (3pt) (9,21) circle (3pt);;
\draw (7,20) circle (3pt) (9,20) circle (3pt)
      (10,20) circle (3pt) (5,21) circle (3pt)
      (6,21) circle (3pt) (7,21) circle (3pt)
      (10,21) circle (3pt);
\draw (0,20) node {$\times$} (3,20) node {$\times$}
      (5,20) node {$\times$} (6,20) node {$\times$}
      (0,21) node {$\times$} (1,21) node {$\times$} 
      (3,21) node {$\times$};

\draw (0,17.5) rectangle (3,18.5) node [midway] {$C_1$}; 
\draw (4,17.5) rectangle (7,18.5) node [midway] {$C_2$};
\draw (8,17.5) rectangle (11,18.5) node [midway] {$C_3$};
\draw (0,17.5) -- (1,17.5) node [midway, below] {$2$};
\draw (1,17.5) -- (2,17.5) node [midway, below] {$6$};
\draw (2,17.5) -- (3,17.5) node [midway, below] {$3$};
\draw (4,17.5) -- (6,17.5) node [midway, below] {$1$};
\draw (6,17.5) -- (7,17.5) node [midway, below] {$5$};
\draw (8,17.5) -- (9,17.5) node [midway, below] {$7$};
\draw (9,17.5) -- (11,17.5) node [midway, below] {$4$};

\draw (0,18.5) -- (2,18.5) node [midway, above] {$1$};
\draw (2,18.5) -- (3,18.5) node [midway, above] {$2$};
\draw (4,18.5) -- (5,18.5) node [midway, above] {$3$};
\draw (5,18.5) -- (7,18.5) node [midway, above] {$4$};
\draw (8,18.5) -- (9,18.5) node [midway, above] {$5$};
\draw (9,18.5) -- (10,18.5) node [midway, above] {$6$};
\draw (10,18.5) -- (11,18.5) node [midway, above] {$7$};
\draw[fill=black] (0,17.5) circle (3pt) (3,17.5) circle (3pt)
                  (6,17.5) circle (3pt) (9,17.5) circle (3pt)
                  (2,18.5) circle (3pt) (5,18.5) circle (3pt)
		  (8,18.5) circle (3pt) (11,18.5) circle (3pt);
\draw (2,17.5) circle (3pt) (8,17.5) circle (3pt)
      (11,17.5) circle (3pt) (4,18.5) circle (3pt)
      (7,18.5) circle (3pt) (10,18.5) circle (3pt);
\draw (1,17.5) node {$\times$} (4,17.5) node {$\times$}
      (7,17.5) node {$\times$} (0,18.5) node {$\times$}
      (3,18.5) node {$\times$} (9,18.5) node {$\times$};

\draw (0,15) rectangle (3,16) node [midway] {$C_1$}; 
\draw (4,15) rectangle (7,16) node [midway] {$C_2$};
\draw (8,15) rectangle (11,16) node [midway] {$C_3$};
\draw (0,15) -- (1,15) node [midway, below] {$7$};
\draw (1,15) -- (2,15) node [midway, below] {$2$};
\draw (2,15) -- (3,15) node [midway, below] {$3$};
\draw (4,15) -- (6,15) node [midway, below] {$1$};
\draw (6,15) -- (7,15) node [midway, below] {$5$};
\draw (8,15) -- (9,15) node [midway, below] {$6$};
\draw (9,15) -- (11,15) node [midway, below] {$4$};

\draw (0,16) -- (2,16) node [midway, above] {$1$};
\draw (2,16) -- (3,16) node [midway, above] {$2$};
\draw (4,16) -- (5,16) node [midway, above] {$3$};
\draw (5,16) -- (7,16) node [midway, above] {$4$};
\draw (8,16) -- (9,16) node [midway, above] {$5$};
\draw (9,16) -- (10,16) node [midway, above] {$6$};
\draw (10,16) -- (11,16) node [midway, above] {$7$};
\draw[fill=black] (2,15) circle (3pt) (4,15) circle (3pt)
                  (7,15) circle (3pt) (8,15) circle (3pt)
                  (11,15) circle (3pt) (0,16) circle (3pt)
		  (3,16) circle (3pt) (4,16) circle (3pt)
                  (7,16) circle (3pt) (9,16) circle (3pt);
\draw (0,15) circle (3pt) (3,15) circle (3pt)
      (9,15) circle (3pt) (5,16) circle (3pt)
      (10,16) circle (3pt);
\draw (1,15) node {$\times$} (6,15) node {$\times$}
      (2,16) node {$\times$} (8,16) node {$\times$}
      (11,16) node {$\times$};

\draw (0,12.5) rectangle (3,13.5) node [midway] {$C_1$}; 
\draw (4,12.5) rectangle (9,13.5) node [midway] {$C_2$};
\draw (10,12.5) rectangle (13,13.5) node [midway] {$C_3$};
\draw (1,12.5) -- (2,12.5) node [midway, below] {$6$};
\draw (4,12.5) -- (7,12.5) node [midway, below] {$1$};
\draw (7,12.5) -- (8,12.5) node [midway, below] {$5$};
\draw (8,12.5) -- (9,12.5) node [midway, below] {$7$};
\draw (10,12.5) -- (13,12.5) node [midway, below] {$4$};

\draw (0,13.5) -- (3,13.5) node [midway, above] {$1$};
\draw (4,13.5) -- (5,13.5) node [midway, above] {$2$};
\draw (5,13.5) -- (6,13.5) node [midway, above] {$3$};
\draw (6,13.5) -- (9,13.5) node [midway, above] {$4$};
\draw (10,13.5) -- (11,13.5) node [midway, above] {$5$};
\draw (11,13.5) -- (12,13.5) node [midway, above] {$6$};
\draw (12,13.5) -- (13,13.5) node [midway, above] {$7$};
\draw[fill=black] (1,12.5) circle (3pt) (2,12.5) circle (3pt)
                  (8,12.5) circle (3pt) (5,13.5) circle (3pt)
                  (11,13.5) circle (3pt) (12,13.5) circle (3pt);
\draw (0,12.5) circle (3pt) (3,12.5) circle (3pt)
      (10,12.5) circle (3pt) (13,12.5) circle (3pt)
      (4,13.5) circle (3pt) (6,13.5) circle (3pt)
      (9,13.5) circle (3pt);
\draw (4,12.5) node {$\times$} (7,12.5) node {$\times$}
      (0,13.5) node {$\times$} (3,13.5) node {$\times$}
      (10,13.5) node {$\times$} (13,13.5) node {$\times$};

\draw (0,10) rectangle (2,11) node [midway] {$C_1$}; 
\draw (4,10) rectangle (7,11) node [midway] {$C_2$};
\draw (8,10) rectangle (10,11) node [midway] {$C_3$};
\draw (0,10) -- (1,10) node [midway, below] {$2$};
\draw (1,10) -- (2,10) node [midway, below] {$3$};
\draw (4,10) -- (5,10) node [midway, below] {$5$};
\draw (5,10) -- (6,10) node [midway, below] {$1$};
\draw (6,10) -- (7,10) node [midway, below] {$6$};
\draw (8,10) -- (9,10) node [midway, below] {$7$};
\draw (9,10) -- (10,10) node [midway, below] {$4$};
\draw (0,11) -- (1,11) node [midway, above] {$1$};
\draw (1,11) -- (2,11) node [midway, above] {$2$};
\draw (4,11) -- (5,11) node [midway, above] {$3$};
\draw (5,11) -- (6,11) node [midway, above] {$4$};
\draw (6,11) -- (7,11) node [midway, above] {$5$};
\draw (8,11) -- (9,11) node [midway, above] {$6$};
\draw (9,11) -- (10,11) node [midway, above] {$7$};
\draw[fill=black] (0,10) circle (3pt) (2,10) circle (3pt)
                  (6,10) circle (3pt) (9,10) circle (3pt)
                  (1,11) circle (3pt) (5,11) circle (3pt)
                  (8,11) circle (3pt) (10,11) circle (3pt);
\draw (1,10) circle (3pt) (5,10) circle (3pt)
      (0,11) circle (3pt) (2,11) circle (3pt)
      (4,11) circle (3pt) (7,11) circle (3pt);
\draw (4,10) node {$\times$} (7,10) node {$\times$}
      (8,10) node {$\times$} (10,10) node {$\times$}
      (6,11) node {$\times$} (9,11) node {$\times$};

\draw (0,7.5) rectangle (2,8.5) node [midway] {$C_1$}; 
\draw (0,7.5) -- (1,7.5) node [midway, below] {$7$};
\draw (1,7.5) -- (2,7.5) node [midway, below] {$2$};
\draw (0,8.5) -- (2,8.5) node [midway, above] {$1$};
\draw[fill=black] (0,8.5) circle (3pt) (2,8.5) circle (3pt);
\draw (0,7.5) node {$\times$} (2,7.5) node {$\times$};
\draw (1,7.5) circle (3pt);
\draw (3,7.5) rectangle (8,8.5) node [midway] {$C_2$};
\draw (3,7.5) -- (4,7.5) node [midway, below] {$3$};
\draw (4,7.5) -- (6,7.5) node [midway, below] {$1$};
\draw (6,7.5) -- (7,7.5) node [midway, below] {$5$};
\draw (7,7.5) -- (8,7.5) node [midway, below] {$6$};
\draw (3,7.5) node {$\times$} (8,7.5) node {$\times$} (4,8.5) node {$\times$};
\draw[fill=black] (4,7.5) circle (3pt) (6,7.5) circle (3pt) 
                  (7,8.5) circle (3pt) (5,8.5) circle (3pt);
\draw (3,8.5) circle (3pt) (8,8.5) circle (3pt) (7,7.5) circle (3pt);
\draw (3,8.5) -- (4,8.5) node [midway, above] {$2$};
\draw (4,8.5) -- (5,8.5) node [midway, above] {$3$};
\draw (5,8.5) -- (7,8.5) node [midway, above] {$4$};
\draw (7,8.5) -- (8,8.5) node [midway, above] {$5$};
\draw (9,7.5) rectangle (11,8.5) node [midway] {$C_3$};
\draw (9,7.5) -- (11,7.5) node [midway, below] {$4$};
\draw (9,8.5) -- (10,8.5) node [midway, above] {$6$};
\draw (10,8.5) -- (11,8.5) node [midway, above] {$7$};
\draw[fill=black] (9,7.5) circle (3pt) (11,7.5) circle (3pt);
\draw (9,8.5) circle (3pt) (11,8.5) circle (3pt);
\draw (10,8.5) node {$\times$};

\draw (0,5) rectangle (1,6) node [midway] {$C_1$};
\draw (0,5) -- (1,5) node [midway, below] {$2$};
\draw (0,6) -- (1,6) node [midway, above] {$1$};
\draw[fill=black] (0,5) circle (3pt) (1,5) circle (3pt);
\draw (0,6) node {$\times$} (1,6) node {$\times$};
\draw (2,5) rectangle (7,6) node [midway] {$C_2$};
\draw (2,5) -- (3,5) node [midway, below] {$3$};
\draw (3,5) -- (4,5) node [midway, below] {$5$};
\draw (4,5) -- (5,5) node [midway, below] {$1$};
\draw (5,5) -- (6,5) node [midway, below] {$6$};
\draw (6,5) -- (7,5) node [midway, below] {$7$};
\draw (2,6) circle (3pt) (3,5) circle (3pt);
\draw[fill=black] (6,5) circle (3pt) (7,5) circle (3pt);
\draw (4,5) node {$\times$} (5,5) node {$\times$} (6,6) node {$\times$};
\draw[fill=black] (7,6) circle (3pt) (2,6) circle (3pt) (3,6) circle (3pt);
\draw (4,6) circle (3pt) (5,6) circle (3pt);
\draw (2,6) -- (3,6) node [midway, above] {$2$};
\draw (3,6) -- (4,6) node [midway, above] {$3$};
\draw (4,6) -- (5,6) node [midway, above] {$4$};
\draw (5,6) -- (6,6) node [midway, above] {$5$};
\draw (6,6) -- (7,6) node [midway, above] {$6$};
\draw (8,5) rectangle (9,6) node [midway] {$C_3$};
\draw (8,5) -- (9,5) node [midway, below] {$4$};
\draw (8,6) -- (9,6) node [midway, above] {$7$};
\draw[fill=black] (8,6) circle (3pt) (9,6) circle (3pt);
\draw (8,5) circle (3pt) (9,5) circle (3pt);

\draw (0,2.5) rectangle (4,3.5) node [midway] {$C_1$};
\draw (0,2.5) -- (1,2.5) node [midway, below] {$5$};
\draw (1,2.5) -- (2,2.5) node [midway, below] {$4$};
\draw (2,2.5) -- (3,2.5) node [midway, below] {$7$};
\draw (3,2.5) -- (4,2.5) node [midway, below] {$6$};
\draw (0,3.5) -- (4,3.5) node [midway, above] {$1$};
\draw (0,2.5) circle (3pt) (2,2.5) circle (3pt) (4,2.5) circle (3pt);
\draw[fill=black] (0,3.5) circle (3pt) (4,3.5) circle (3pt);
\draw (1,2.5) node {$\times$} (3,2.5) node {$\times$};
\draw (6,2.5) rectangle (10,3.5) node [midway] {$C_3$};
\draw (6,3.5) -- (7,3.5) node [midway, above] {$4$};
\draw (7,3.5) -- (8,3.5) node [midway, above] {$5$};
\draw (8,3.5) -- (9,3.5) node [midway, above] {$6$};
\draw (9,3.5) -- (10,3.5) node [midway, above] {$7$};
\draw (6,2.5) -- (10,2.5) node [midway, below] {$2$};
\draw (7,3.5) circle (3pt) (9,3.5) circle (3pt);
\draw[fill=black] (6,2.5) circle (3pt) (10,2.5) circle (3pt);
\draw (6,3.5) node {$\times$} (8,3.5) node {$\times$} (10,3.5) node {$\times$};

\draw (2,0.5) rectangle (7,1.5) node [midway] {$C_2$};
\draw (2,0.5) -- (3,0.5) node [midway, below] {$3$};
\draw (3,0.5) -- (7,0.5) node [midway, below] {$1$};
\draw (2,1.5) -- (6,1.5) node [midway, above] {$2$};
\draw (6,1.5) -- (7,1.5) node [midway, above] {$3$};
\draw[fill=black] (2,0.5) circle (3pt) (3,0.5) circle (3pt) 
                  (7,0.5) circle (3pt) (2,1.5) circle (3pt)
                  (6,1.5) circle (3pt) (7,1.5) circle (3pt);

\end{tikzpicture}
\end{center}
\caption{List of possible separatrix diagrams in a suitable direction, case
$\Prym[3](2,1,1)$.} 
\label{fig:SuitableDir211} 
\end{figure}
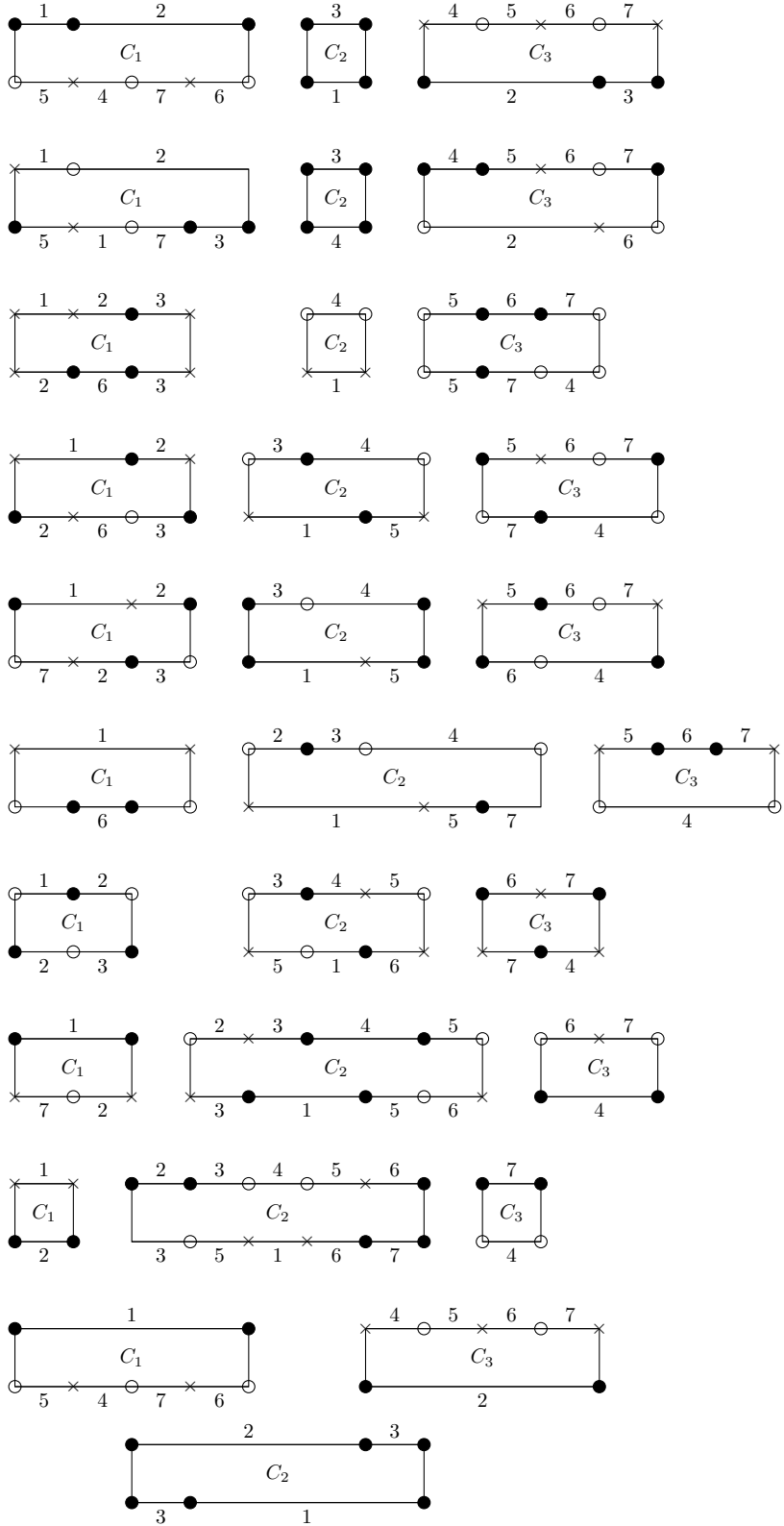
\section{Implementing the algorithm}

\subsection{Results in the case $\Prym[3](2,1,1)$} 
Implementing the algorithm of Section~\ref{sec:implementation} shows that 
there is no solutions for $D_0\in\{2,3,6\}$.
Moreover, for $D_0=33$ there are $3$ possible matrices 
for $M^{\rm red}$. Two matrices correspond to the case where $\beta$ 
crosses the cylinder $C_2$. These can be excluded by Lemma~\ref{le:211crossC1}.
The remaining case is given by 
\bas
&w(C_1)= 1,\quad  &&h(C_1) = 1, \quad &&w(Z_1) = \tfrac{9+\sqrt{33}}2, \quad  
&&h(Z_1) = \tfrac{-3 + \sqrt{33}}{36}\\
&w(C_2) = \tfrac{3+\sqrt{33}}{6}, \quad &&h(C_2) = \tfrac{3 + \sqrt{33}}2, 
\quad  &&w(Z_2) = \tfrac{21 + 3\sqrt{33}}2 \quad &&h(Z_2) = \tfrac13
\eas
and the reduced intersection matrix
$$
M^\red = \left( \begin{matrix} 0 & 6 \\ 3 & 3 \end{matrix} \right).
\quad \text{Hence} \quad
M = \left( \begin{matrix} 0 & 3 & 0 \\ 3 & 3 & 3 \\ 0 & 3 & 0 \end{matrix} \right).
$$

\subsection{Results in the case $\Prym[3](2,2)$}  \label{sec:resPrym22}
The preceding algorithm shows that there is no solutions for $D_0=6$.
Moreover, for $D_0\in\{2,3,33\}$ there are $7$ possible matrices 
for $M^{\rm red}$. More precisely the following proposition holds:
\par
\begin{Prop} \label{prop:redint22}
If $(\Theta_1,\Theta_2)$ is a pair of directions on a primitive Veech surface in $\Prym[3](2,2)$
satisfying above convention then the possible reduced intersection matrices are:
\begin{table}[htb]
\begin{tabular}{|l|c|c|l|c|c|}
  \hline
  K & $M^{\rm red}$ & $r^{\mathrm hor}_2$ & K & $M^{\rm red}$ & $r^{\mathrm hor}_2$ \\
  \hline
  $\QQ[\sqrt{2}]$ & $\left( \begin{matrix} 72 & 48 \\ 24 & 18 \end{matrix} \right)$ & $(\sqrt{2})/2$  & $\QQ[\sqrt{33}]$
& $\left( \begin{matrix} 6 & 24 \\ 12 & 54 \end{matrix} \right)$ & $(3+\sqrt{33})/2$ \\ \hline
  $\QQ[\sqrt{3}]$ & $\left( \begin{matrix} 72 & 24 \\ 12 & 6 \end{matrix} \right)$ & $(-1+\sqrt{3})/2$ &  $\QQ[\sqrt{33}]$
& $\left( \begin{matrix} 6 & 24 \\ 3 & 18 \end{matrix} \right)$ & $(-3+\sqrt{33})/2$   \\ \hline
$\QQ[\sqrt{3}]$  & $\left( \begin{matrix} 72 & 24 \\ 48 & 18 \end{matrix} \right)$ & $(1+\sqrt{3})/2$ &  $\QQ[\sqrt{33}]$
    & $\left( \begin{matrix} 3 & 6 \\ 3 & 0 \end{matrix} \right)$ & $(-3+\sqrt{33})/2$  \\ \hline
$\QQ[\sqrt{3}]$  & $\left( \begin{matrix} 36 & 12 \\ 30 & 12 \end{matrix} \right)$ & $\sqrt{3}$  &&&\\ \hline
\end{tabular}
\end{table}
\end{Prop}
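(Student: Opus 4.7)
The plan is to execute the algorithm described in Section~\ref{sec:implementation} specialized to the locus $\Prym[3](2,2)$, running through each discriminant $D_0 \in \{2,3,6,33\}$ that appears in the table of Theorem~\ref{thm:technical22}. The finiteness proof from Proposition~\ref{prop:intmatfinite} was already constructive, so Proposition~\ref{prop:redint22} amounts to carrying out that construction and tabulating the (few) surviving cases.

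First, I would build the list $\mathcal R$ of admissible triples. For each solution $(\zeta_{XY}, \zeta_U, r_2) = (\zeta_N^{e_X}, \zeta_N^{e_U}, r_2)$ in the table of Theorem~\ref{thm:technical22} with $r_2 \in \QQ[\sqrt{D_0}]$ and $N^K_\QQ(r_2) < 0$ (as forced by Corollary~\ref{cor:normneg}), I compute the heights $(h_1, h_2)$ via Lemma~\ref{le:heights} and enumerate the finitely many lengths
$$
|\gamma| \= -2\frac{e_X}{N} + 2r_2\frac{e_U}{N} + k + \ell r_2, \quad k\in\{-2,-1,0,1,2\},\ \ell\in\{-1,0,1\},
$$
retaining only those with $0 < |\gamma| < \max\{1, r_2\}$ (so that $\gamma$ may sit on the boundary of a cylinder).

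Next, for every ordered pair in $\mathcal R$ sharing the same field $\QQ[\sqrt{D_0}]$, I would interpret the first tuple as horizontal data $(r_1=1, r_2, h_1=1, h_2)$ and the second as vertical data $(w(Z_1), w(Z_2), |\beta|, h(Z_1), h(Z_2))$. By Lemma~\ref{le:22crossC1}, every primitive Veech surface admits a suitable direction $\Theta_2$ whose saddle connection $\beta$ crosses $C_2$ exactly once, so I may impose $|\beta| = h_2$ and discard the alternative $|\beta| = 2h_1$ for the purpose of classifying the resulting matrices. The reduced intersection matrix $M^{\rm red}$ is then the unique solution to the linear system~\eqref{eq:computeMred} obtained by applying Galois conjugation over $\QQ$ to the widths and heights; I keep only those $M^{\rm red}$ that are non-negative integral and satisfy $(M^{\rm red})_{12} \equiv 0 \pmod 2$ (so that $M_{12} = M_{32}$ is integral).

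The expected output, and the claim of the proposition, is that the search returns nothing for $D_0 = 6$ and precisely the seven listed matrices for $D_0 \in \{2,3,33\}$. The main obstacle is not conceptual but computational bookkeeping: the list $\mathcal R$ has $8 \cdot 5 \cdot 3 = 120$ candidates per solution of Theorem~\ref{thm:technical22}, so pair enumeration involves on the order of $10^5$ tests, each requiring a $2\times 2$ linear solve over a real quadratic field and several integrality/positivity checks. The crucial theoretical input that keeps this feasible is the uniqueness of the solution to~\eqref{eq:computeMred} (since $(w(Z_1), w(Z_2))$ forms a $\QQ$-basis of $K$), together with the sharp discriminant restrictions from Theorem~\ref{thm:technical22}; the parity constraint $(M^{\rm red})_{12} \equiv 0 \pmod 2$ and the positivity of all entries eliminate the bulk of spurious solutions and leave the small table asserted.
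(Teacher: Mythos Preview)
Your proposal follows the paper's own approach: the proposition is established by executing the constructive algorithm of Section~\ref{sec:implementation} for $\Prym[3](2,2)$ over the fields $\QQ(\sqrt{D_0})$ with $D_0 \in \{2,3,6,33\}$, and the paper's ``proof'' is precisely the sentence that running this algorithm yields no solutions for $D_0=6$ and the seven listed matrices otherwise. Your description of the steps (building $\mathcal R$, pairing tuples, solving~\eqref{eq:computeMred}, and filtering by integrality, non-negativity and the parity condition on $(M^{\rm red})_{12}$) matches the paper's procedure.

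There is one point where you diverge slightly from the paper. You invoke Lemma~\ref{le:22crossC1} to discard the case $|\beta| = 2h_1$ up front and only test $|\beta| = h_2$. For the purpose of the main theorem this is a legitimate simplification, since every primitive Veech surface admits such a pair. However, the proposition as stated quantifies over \emph{all} admissible pairs $(\Theta_1,\Theta_2)$, and the definition of admissible in $\Prym[3](2,2)$ explicitly allows $\beta$ to cross $C_1$ and $C_3$ (giving $|\beta| = 2h_1$). The paper's algorithm in Section~\ref{sec:implementation} runs both cases, and the text preceding Proposition~\ref{prop:redint22} does not mention filtering by Lemma~\ref{le:22crossC1} (contrast this with the $\Prym[3](2,1,1)$ case, where the paper first obtains three matrices from the full algorithm and only \emph{afterwards} discards two via Lemma~\ref{le:211crossC1}). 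So to prove the proposition exactly as stated you should keep both normalizations $|\beta| \in \{2h_1, h_2\}$; your shortcut proves a statement sufficient for Theorem~\ref{thm:main} but not the proposition verbatim.
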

We now discuss case by case each possible intersection matrix.
\par
\subsection{Solutions for the intersection matrices when $D_0=2$}
We consider the  reduced intersection matrix $M_1^\red = \left( \begin{smallmatrix} 72 & 48 \\ 24 & 18 \end{smallmatrix} \right)$.
\bas
&w(C_1)= 1,\quad  &&h(C_1) = 1, \quad &&w(Z_1) = \scriptstyle{72+48\sqrt{2}} , \quad  
&&h(Z_1) =\tfrac{3 - 2\sqrt{2}}{24}  \\
&w(C_2) =\tfrac{\sqrt{2}}{2}, \quad &&h(C_2) = \scriptstyle{2\sqrt{2}}, 
\quad  &&w(Z_2) = \scriptstyle{48 + 36\sqrt{2}} \quad &&h(Z_2) = \tfrac{-4+3\sqrt{2}}{12}
\eas

\subsection{Solutions for the intersection matrices when $D_0=3$}
For the reduced intersection matrix $M_2^\red = \left( \begin{smallmatrix} 72 & 24 \\ 12 & 6 \end{smallmatrix} \right)$ one has
\bas
&w(C_1)= 1,\quad  &&h(C_1) = 1, \quad &&w(Z_1) = \scriptstyle{48+24\sqrt{3}}, \quad  
&&h(Z_1) =\tfrac{2 - \sqrt{3}}{24}  \\
&w(C_2) =\tfrac{-1+\sqrt{3}}{2}, \quad &&h(C_2) =  \scriptstyle{-2+2\sqrt{3}}, 
\quad  &&w(Z_2) = \scriptstyle{12 + 12\sqrt{3}} \quad &&h(Z_2) = \tfrac{-5+3\sqrt{3}}{12}\,.
\eas
For the reduced intersection matrix $M_3^\red = \left( \begin{smallmatrix} 72 & 24 \\ 48 & 18 \end{smallmatrix} \right)$ one has
\bas
&w(C_1)= 1,\quad  &&h(C_1) = 1, \quad &&w(Z_1) = \scriptstyle{168+96\sqrt{3}}, \quad  
&&h(Z_1) =\tfrac{2 - \sqrt{3}}{24}  \\
&w(C_2) = \tfrac{1+\sqrt{3}}{2} , \quad &&h(C_2) =  \scriptstyle{2+2\sqrt{3}} , 
\quad  &&w(Z_2) = \scriptstyle{60 + 36\sqrt{3}} \quad &&h(Z_2) = \tfrac{-5+3\sqrt{3}}{12}\,. 
\eas
For the reduced intersection matrix $M_4^\red = \left( \begin{smallmatrix} 36 & 12 \\ 30 & 12 \end{smallmatrix} \right)$ one has
\bas
&w(C_1)= 1,\quad  &&h(C_1) = 1, \quad &&w(Z_1) =  \scriptstyle{36+20\sqrt{3}}, \quad  
&&h(Z_1) =\tfrac{2 - \sqrt{3}}{12}  \\
&w(C_2) = \scriptstyle{\sqrt{3}}, \quad &&h(C_2) = \scriptstyle{2(\sqrt{3})/3}, 
\quad  &&w(Z_2) = \scriptstyle{12 + 8\sqrt{3}}, \quad &&h(Z_2) = \tfrac{-5+3\sqrt{3}}{6}\,. 
\eas
\subsection{Solutions for the intersection matrices when $D_0=33$}
For the reduced intersection matrix $M_5^\red = \left( \begin{smallmatrix} 6 & 24 \\ 12 & 54 \end{smallmatrix} \right)$ one has
\bas
&w(C_1)= 1,\quad  &&h(C_1) = 1, \quad &&w(Z_1) =  \scriptstyle{12+2\sqrt{33}}, \quad  
&&h(Z_1) = \tfrac{6 - \sqrt{33}}{6}, \\
&w(C_2) = \tfrac{3+\sqrt{33}}{2}, \quad &&h(C_2) = \tfrac{3+\sqrt{33}}{6}\, 
\quad  &&w(Z_2) = \scriptstyle{51 + 9\sqrt{33}}, \quad &&h(Z_2) = \tfrac{-5+\sqrt{33}}{12}\,. 
\eas
For the reduced intersection matrix $M_6^\red = \left( \begin{smallmatrix} 6 & 24 \\ 3 & 18 \end{smallmatrix} \right)$ one has
\bas
&w(C_1)= 1,\quad  &&h(C_1) = 1, \quad &&w(Z_1) = \tfrac{9+\sqrt{33}}{2}   \quad  
&&h(Z_1) = \tfrac{6 - \sqrt{33}}{6}\  \\
&w(C_2) = \tfrac{-3+\sqrt{33}}{2},\quad &&h(C_2) = \tfrac{-3+\sqrt{33}}{6}, 
\quad  &&w(Z_2) = \scriptstyle{15 + 3\sqrt{33}}, \quad &&h(Z_2) = \tfrac{-5+\sqrt{33}}{12}\,. 
\eas
For the reduced intersection matrix $M_7^\red = \left( \begin{smallmatrix} 3 & 6 \\ 3 & 0 \end{smallmatrix} \right)$ one has
\bas
&w(C_1)= 1,\quad  &&h(C_1) = 1, \quad &&w(Z_1) =  \tfrac{3+\sqrt{33}}{2} , \quad  
&&h(Z_1) = \tfrac{-3 + \sqrt{33}}{12}\\
&w(C_2) = \tfrac{-3+\sqrt{33}}{2}, \quad &&h(C_2) = \tfrac{-3+\sqrt{33}}{6}\, 
\quad  &&w(Z_2) = 6, \quad &&h(Z_2) = \tfrac{7-\sqrt{33}}{12}\,. 
\eas

\section{Non-existence in $\Prym[3](2,1,1)$}

We can now complete the proof of non-existence, using again the list
of configurations in this stratum.
\par
\begin{proof}[Proof of Theorem~\ref{thm:main:2}]
>From the intersection matrix we deduce that $C_1$ and $C_3$ 
consists of intersection points with $Z_2$ only. Since $9$ is
odd, the cylinder $C_2$ has a fixed point of $\rho$ in the
center of one of the rectangles, necessarily an intersection
with $Z_2$. In Figure~\ref{fig:Remaining211} this is the leftmost
rectangle of the middle strip.  In $C_2$ the other two intersection 
rectangles with
$Z_2$ are symmetric with respect to this rectangle. Suppose with loss of 
generality that there is a double zero on the bottom $C_1$.
\par
We now use that no possibility for the relative period between
a simple zero and a double zero (compare the list in the previous section)
is rational. Consequently, the lower boundary of $C_1$ does not contain
a simple zero.  By inspection of Figure~\ref{fig:SuitableDir211} we conclude
that the lower boundary of $C_1$ has a single saddle connection. This implies
that the three occurrences of $Z_2$-rectangles in $C_2$ are adjacent, as drawn in
Figure~\ref{fig:Remaining211}. 
\begin{figure}[htbp]
\begin{center}
\begin{tikzpicture}
\draw (0,0) rectangle (1,1) node [midway] {$Z_2$};
\draw (1,0) rectangle (2,1) node [midway] {$Z_2$};
\draw (2,0) rectangle (3,1) node [midway] {$Z_2$};
\draw (1,1) rectangle (2,2) node [midway] {$Z_2$};
\draw (2,1) rectangle (3,2) node [midway] {$Z_2$};
\draw (0,0) -- (1,0) node [midway, below] {$4$};
\draw (1,0) -- (2,0) node [midway, below] {$5$};
\draw (2,0) -- (3,0) node [midway, below] {$6$};
\draw (0,1) -- (1,1) node [midway, above] {$B$};
\draw (3,1) rectangle (4,2);
\draw (4,1) rectangle (5,2);
\draw (5,1) rectangle (6,2);
\draw (6,1) rectangle (7,2);
\draw (7,1) rectangle (8,2);
\draw (8,1) rectangle (9,2);
\draw (9,1) rectangle (10,2) node [midway] {$Z_2$};
\draw (9,2) -- (10,2) node [midway, above] {$A'$};
\draw (9,1) -- (10,1) node [midway, below] {$B'$};
\draw[fill=black] (3,1) circle (3pt);
\draw[fill=black] (3,2) circle (3pt);
\draw (0,2) rectangle (1,3) node [midway] {$Z_2$};
\draw (1,2) rectangle (2,3) node [midway] {$Z_2$};
\draw (2,2) rectangle (3,3) node [midway] {$Z_2$};
\draw (0,3) -- (1,3) node [midway, above] {$1$};
\draw (1,3) -- (2,3) node [midway, above] {$2$};
\draw (2,3) -- (3,3) node [midway, above] {$3$};
\draw (0,2) -- (1,2) node [midway, below] {$A$};
\end{tikzpicture}
\end{center}
\caption{Ruling out the remaining case in $\Prym[3](2,1,1)$.} 
\label{fig:Remaining211}
\end{figure}
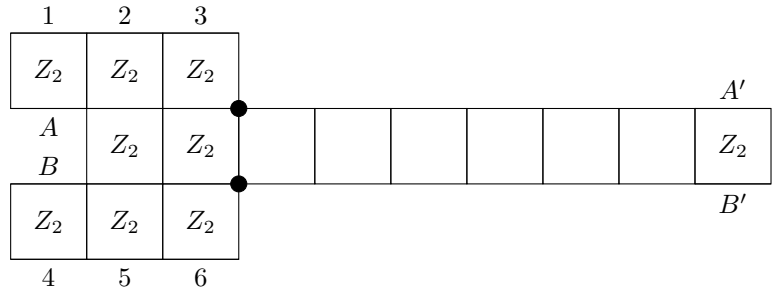
To ensure that the total angle at the double zero (indicated by a black circle)
does not exceed $6\pi$ the two rectangles with the symbol $1$ resp.\ $2$ have to
be glued. This contradicts that the unlabeled rest of $C_2$ consists of just
two vertical cylinders $Z_1$ and $Z_3$.
\end{proof}

\section{Effective finiteness in the 
locus  $\Prym[3](2,2)$} \label{sec:model22}

In this section we complete the proof of Theorem~\ref{thm:main}
by giving a practically feasible algorithm to compile a short finite
list of remaining candidate surfaces. The algorithm proceeds case
by case according to the list of possible separatrix diagrams in 
Figure~\ref{fig:SuitableDir22}. We give all the details for a specific
case, the fourth diagram 'SD4' in this figure, see also the horizontal
direction of the surface in Figure~\ref{fig:model:4} below. We summarize 
the output of the algorithm in the remaining cases.
\par
\begin{figure}[htbp]
\begin{tikzpicture}[scale=0.6]
\fill[fill=yellow!80!black!20,even  odd rule]  (-1,0)  rectangle (2,3);
\draw (0,0) --(-1,0) -- (-1,3) -- (0,3)  -- (0.5,4) --  (5.5,4) --  (5,3) -- (2,3) --
(2,0) -- (5,0) -- (4.5,-1) -- (-0.5,-1) -- cycle; 
\draw (0,0) -- (2,0) (0,3) -- (2,3);

 \draw[thick,  dashed, ->,  >=angle 45]
(-1,0)  .. controls (-0.5,1.5) ..   (-1,3);  \draw[thick, dashed, ->, >= angle 45]
(3.5,3) -- (4,4); \draw[thick,  dashed, ->, >= angle 45](-0.25,-0.5)
-- (2,-0.5);    \draw[thick,   dashed]   (2,-0.5)    --   (4.75,-0.5);
\draw[thick, dashed, ->, >= angle 45] (3,-1) -- (3.5,0);
 
 \draw[thick,dashed,  ->,  >=  angle  45] (0.25,3.5)  --  (3,3.5);  \draw[thick,
  dashed] (1,3.5) -- (5.25,3.5);

\draw[->,>= angle 45, thick,  dashed] (-1,1.5) -- (1,1.5); \draw[thick,
  dashed]  (1,1.5) --  (2,1.5);
    
\filldraw[fill=white,  draw=black]  (-1,0)  circle (2pt)  (2,0)  circle
(2pt) (2.5,4) circle (2pt) (1.5,-1) circle (2pt)
(-1,3) circle  (2pt) (2,3) circle  (2pt) ;

\filldraw[fill=black,  draw=black]  (0,0) circle  (2pt) (0,3) circle  (2pt)
(0.5,4) circle (2pt) (5.5,4) circle (2pt) (5,3) circle  (2pt) (-0.5,-1) circle (2pt) (4.5,-1) circle
(2pt) (5,0) circle  (2pt);

\draw  (0,1.5) node[above]  {$\scriptstyle \alpha_1$}  (-1,1.5) node[left]
      {$\scriptstyle \beta_1$}  (2.75, 2.5) node[right] {$\scriptstyle
        \beta_{2,1}$} (6,3) node[above] {$\scriptstyle \alpha_{2,1}$}
      (-0.25,-0.5)  node[left]  {$\scriptstyle \alpha_{2,2}$}  (3.5,0)
      node[above] {$\scriptstyle \beta_{2,2}$} (1,3)
      node[below] {$\scriptstyle \gamma=s\cdot \lambda$};
\end{tikzpicture}
\caption{ 
\label{fig:model:4}
If       $\alpha_2:=\alpha_{2,1}+\alpha_{2,2}$ and $\beta_2:=\beta_{2,1}+\beta_{2,2}$
then  $\{\alpha_1,\beta_1,\alpha_2,\beta_2\}$     is     a  symplectic  basis     of
  $H_{1}(X,\ZZ)^{-}$.
}
\end{figure}
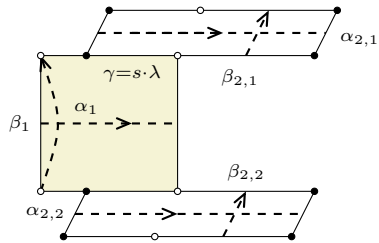

\subsection{Finding arithmetic surfaces with a given reduced intersection matrix}

We have already seen that the finiteness of the possibilities for the
parameters $w(C_i),h(C_i),w(Z_i),h(Z_i)$ as well as the reduced intersection matrix 
gives at most a finite number of possibilities for the flat surface $(X,\omega)$.
We also know that~$(X,\omega)$ is tiled by rectangles $R_{i,j}$ consisting of t
he intersection of the cylinders $C_i$ and $Z_j$ for $i,j \in \{1,2\}$.
\par
To actually give a good practical upper bound for the number of possibilities
we proceed as follows. First, we use a prototype to present the surface
with the given separatrix diagrams in the horizontal direction in a standard
form. Second, we loop over possible 'arithmetic surfaces' that encode the 
adjacency of the rectangles $R_{i,j}$.
\par
The first step is given by the following proposition, whose proof is
completely parallel to that of~\cite[Proposition 4.2]{LNWeierstrass}.
\par
\begin{Prop} \label{prop:proto}
Let $(X,\omega) \in \Omega E_D(2,2)$  be a Prym eigenform with horizontal
separatrix diagram as in 'SD4', equipped with
loops $\alpha_{i,j}$ and $\beta_{i,j}$ as   presented in Figure~\ref{fig:model:4}.
Then after applying a suitable element in the upper triangular group
there exist $(w,h,t,e) \in \ZZ^{4}$ and $s\in (0,1)$ such that \medskip
%
\begin{itemize}
\item         the          tuple         $(w,h,t,e)$         satisfies
  $(\mathcal{P}_D) := \left\{\begin{array}{l}        w>0,h>0,\;        0\leq
  t<\gcd(w,h),\\    \gcd   (w,h,t,e)    =1,\\    D=e^2+8w   h,\\    0<
  \lambda:=\frac{e+\sqrt{D}}{2}< w  \\
\end{array}
\right.$,
\item there exists  a generator $T$ of $\mathcal O_D$ with the property 
$T^*(\omega)=\lambda\omega$ that is represented
in  the  basis  $\{\alpha_1,\beta_1,\alpha_2,\beta_2\}$ by 
$\left( 
\begin{smallmatrix}
  e & 0 & 2w & 2t \\ 0 & e & 0 & 2h \\ h & -t & 0 & 0 \\ 0 & w & 0 & 0
  \\
\end{smallmatrix}%
\right)$.
\item \label{normalize:A} and such that with this choice  coordinates
$$\left\{ \begin{array}{l}
  \omega(\ZZ\alpha_1+\ZZ\beta_1)=\lambda\cdot \ZZ^2,
  \\ \omega(\ZZ\alpha_{2,1}+\ZZ\beta_{2,2})=\omega(\ZZ\alpha_{2,2}+\ZZ\beta_{2,2})=\ZZ(w,0)+\ZZ(t,h),\\
  \omega(\gamma)=(s\lambda,0).
\end{array}
\right. 
$$
%
\end{itemize}
\noindent Conversely,  let $(X,\omega) \in \Prym[3](2,2)$ having the above decomposition such that there exists 
$(w,h,t,e)  \in \ZZ^4$ verifying $(\mathcal{P}_D)$  such  that,  after  normalizing by  $\GL^+(2,\RR)$,  the
conditions are satisfied, then $(X,\omega)\in \Omega E_D(2,2)$.
\par
\end{Prop}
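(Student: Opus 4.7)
The plan is to follow almost verbatim the prototype construction of \cite[Proposition~4.2]{LNWeierstrass}, adapted from the stratum $\Omega E_D(1,1)$ to the case of the diagram SD4 in $\Prym[3](2,2)$. The argument splits into a normalization step for the horizontal data and an integrality computation for the proto-generator of $\mathcal O_D$.

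First I would exploit the cylinder decomposition in the horizontal direction. By Lemma~\ref{le:goodtopo22} the Prym involution $\rho$ fixes the middle cylinder $C_1$ and exchanges $C_{2,1}$ and $C_{2,2}$, so the widths/heights/twists satisfy $w(C_{2,1})=w(C_{2,2})$, $h(C_{2,1})=h(C_{2,2})$, and their twists agree modulo $\rho$. Since the surface is a Prym eigenform, all these real numbers lie in $K=\QQ(\sqrt D)$. Using an element of the upper triangular subgroup of $\GL^+(2,\RR)$ (the stabilizer of the horizontal direction) I can rescale so that $w(C_{2,1})=w\in\ZZ_{>0}$, shear so that $t\in\ZZ$ with $0\le t<\gcd(w,h)$, and then arrange that the width of the fixed cylinder $C_1$ equals $\lambda=(e+\sqrt D)/2$ for a uniquely determined $e\in\ZZ$. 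The normalization of $\omega(\gamma)=(s\lambda,0)$ with $s\in(0,1)$ is then forced by the fact that the saddle connection $\gamma$ lies on the top boundary of $C_1$ of length $\lambda$; this yields the first and third bullets of the statement.

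Next I would identify the generator $T$ of $\mathcal O_D$ characterized by $T^*\omega=\lambda\omega$. The anti-invariant homology $H_1(X,\ZZ)^-$ is four-dimensional and carries a symplectic form induced by the Prym polarization, which for $\Prym[3](2,2)$ is of type $(1,2)$ because the index of $\pi^*\Jac(X/\rho)$ in its saturation accounts for a factor of two. Using the basis $\{\alpha_1,\beta_1,\alpha_{2},\beta_{2}\}$ from the figure (with $\alpha_{2}=\alpha_{2,1}+\alpha_{2,2}$ and $\beta_{2}=\beta_{2,1}+\beta_{2,2}$, which automatically land in the anti-invariant part since the individual loops are swapped by $\rho$), the periods of $\omega$ are
\[
\omega(\alpha_1)=\lambda,\quad \omega(\beta_1)=\lambda I,\quad \omega(\alpha_{2})=2w,\quad \omega(\beta_{2})=2t+2hI.
\]
Imposing $T^*\omega=\lambda\omega$ together with $T$-stability of the lattice and self-adjointness with respect to the $(1,2)$-polarization determines $T$ up to finite ambiguity; a direct comparison of periods gives precisely the matrix displayed in the statement. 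Reading off $\det(T)$ and $\Tr(T)$ from that matrix yields the characteristic polynomial $\lambda^2-e\lambda-2wh=0$, which is exactly the condition $D=e^2+8wh$.

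The converse is a routine gluing check: from $(w,h,t,e)\in\ZZ^4$ satisfying $(\mathcal P_D)$ and $s\in(0,1)$ one assembles the tiled surface with the prescribed horizontal cylinders, exhibits the endomorphism $T$ explicitly by its matrix, verifies that $T$ generates $\mathcal O_D$ (using $D=e^2+8wh$ and the coprimality $\gcd(w,h,t,e)=1$), and checks that $T^*\omega=\lambda\omega$ from the formulas for the periods. The main obstacle I anticipate is precisely the polarization bookkeeping producing the factor $8$ rather than the factor $4$ that appears in the genus-two analogue of \cite{LNWeierstrass}; once the type $(1,2)$ of the Prym polarization is properly accounted for, the rest is a computation in $\SL(4,\ZZ)$ of the shape carried out there.
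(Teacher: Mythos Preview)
Your proposal is essentially what the paper does: the paper gives no proof beyond the sentence ``whose proof is completely parallel to that of \cite[Proposition~4.2]{LNWeierstrass}'', and your sketch is a faithful unpacking of that parallel, including the correct identification of the $(1,2)$ polarization on $H_1(X,\ZZ)^{-}$ as the source of the factor $8$ in $D=e^2+8wh$. One small imprecision: the individual loops $\alpha_{2,i},\beta_{2,i}$ are swapped by $\rho$ \emph{with a sign} (since $\rho$ acts as $-1$ on the flat coordinate), which is why their \emph{sums} $\alpha_2,\beta_2$ are anti-invariant rather than invariant; with that correction your period and self-adjointness computations go through exactly as you describe.
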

\par
We refer to the parameters~$s$ and~$t$ as the {\em slit} and {\em twist} respectively.
\par
For the second step we construct all square-tiled surfaces in $\Prym[3](2,2)$
with the horizontal separatrix diagram as in 'SD4' and such that the associated 
intersection matrix is a given matrix $M^{\rm red}$ as listed in 
Section~\ref{sec:resPrym22}. More concretely, let $a_i$ be the number of squares
of the horizontal cylinder~$C_i$ and $b_j$ be the number of squares of 
the vertical cylinder~$Z_j$. Then obviously
$$
\begin{array}{ll}
a_1 = M^{\rm red}_{11} + \frac1{2}M^{\rm red}_{12} & a_2 = 2\cdot M^{\rm red}_{21}+M^{\rm red}_{22} \\
b_1 = M^{\rm red}_{11} + M^{\rm red}_{21} & b_2 = M^{\rm red}_{12}+M^{\rm red}_{22}
\end{array}
$$
Let $\ell_i \in \NN$ for $i=0,\dots,6$ be the length of the horizontal saddle 
connections $\gamma_i$ with label~$i$ in Figure~\ref{fig:SuitableDir22} (still 
diagram 'SD4')  in the corresponding square-tiled surface. We determine all 
possible square-tiled surfaces by running a loop over all 
$\ell_3 \in [0..\min(a_1,a_2)]$, all
twist parameters $T_1 \in [0..a_1]$, and all $T_2\in [0..a_2]$ (in the cylinder~$C_i$, 
with respect to the leftmost singularities in the figure) and then checking 
whether the vertical direction on the surface produces the given reduced 
intersection matrix. We refer to the square-tiled surfaces constructed
in this way as the {\em arithmetic surfaces} underlying $(X,\omega) \in 
\Omega E_D(2,2)$ with suitable horizontal and vertical direction.
\par
In order to convert such an arithmetic surface into a candidate for
a Veech surface in $\Omega E_D(2,2)$ in the normalization given in
Proposition~\ref{prop:proto}, we replace each square by a rectangle
$R_{i,j}$ according to the horizontal and vertical cylinder the square lies in.
Next, we convert the twists $T_i$ on the arithmetic surface into twists
on the candidate for a Veech surface. In fact, if $T_k$ shifts by
$N^k_{i,j}$ squares that correspond to rectangles of type $R_{i,j}$, 
then obviously 
$$
t_k \= h(Z_1)\cdot \sum_{j=1}^3 (N^k_{j,1}+N^k_{j,3}) + h(Z_2)\cdot \sum_{j=1}^3 N^k_{j,2}\,.
$$
Finally, we scale by an upper triangular matrix so that the central horizontal
cylinder becomes a square. 
\par
Once the surface is constructed, we check that the vertical direction 
is admissible {\em i.e.} there is an 
invariant saddle connection that represents a relative period and that 
is contained in $C_2$. This rules out in practice a large number of surfaces.
\par
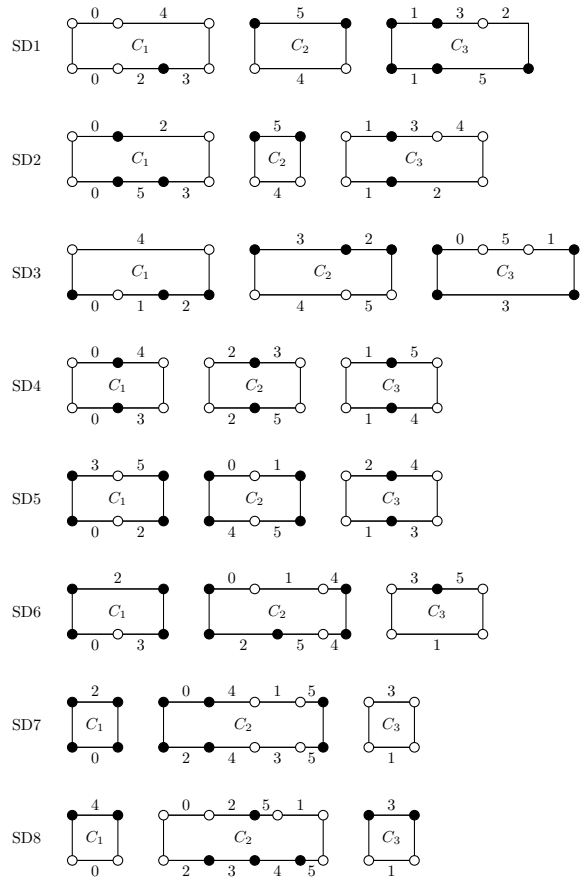
\begin{figure}[htbp]
\begin{center}
\begin{tikzpicture} [scale=0.6, every node/.style={transform shape}]
%
\node (M1) at (0,18.5) {SD1};  
\draw (1,18) rectangle (4,19)  node [midway] {$C_1$};
\draw (2,19) -- (4,19) node [midway, above] {$4$};
\draw (1,19) -- (2,19) node [midway, above] {$0$};
\draw (3,18) -- (4,18) node [midway, below] {$3$};
\draw (2,18) -- (3,18) node [midway, below] {$2$};
\draw (1,18) -- (2,18) node [midway, below] {$0$};
\draw[fill=white] (1,18) circle (3pt) (2,18) circle (3pt) (4,18) circle (3pt)
(1,19) circle (3pt) (2,19) circle (3pt) (4,19) circle (3pt);
\draw[fill=black] (3,18) circle (3pt);
\draw (5,18) rectangle (7,19)  node [midway] {$C_2$};
\draw (5,19) -- (7,19) node [midway, above] {$5$};
\draw (5,18) -- (7,18) node [midway, below] {$4$};
\draw[fill=white] (5,18) circle (3pt) (7,18) circle (3pt);
\draw[fill=black] (5,19) circle (3pt) (7,19) circle (3pt);
\draw (8,18) rectangle (11,19)  node [midway] {$C_3$};
\draw (10,19) -- (11,19) node [midway, above] {$2$};
\draw (9,19) -- (10,19) node [midway, above] {$3$};
\draw (8,19) -- (9,19) node [midway, above] {$1$};
\draw (9,18) -- (11,18) node [midway, below] {$5$};
\draw (8,18) -- (9,18) node [midway, below] {$1$};

\draw[fill=white] (10,19) circle (3pt);
\draw[fill=black] (8,18) circle (3pt) (9,18) circle (3pt) (11,18) circle (3pt)
                  (8,19) circle (3pt) (9,19) circle (3pt);
%
\node (M2) at (0,16) {SD2};  
\draw (1,15.5) rectangle (4,16.5)  node [midway] {$C_1$};
\draw (2,16.5) -- (4,16.5) node [midway, above] {$2$};
\draw (1,16.5) -- (2,16.5) node [midway, above] {$0$};
\draw (3,15.5) -- (4,15.5) node [midway, below] {$3$};
\draw (2,15.5) -- (3,15.5) node [midway, below] {$5$};
\draw (1,15.5) -- (2,15.5) node [midway, below] {$0$};
\draw[fill=white] (1,15.5) circle (3pt) (4,15.5) circle (3pt)
(1,16.5) circle (3pt) (4,16.5) circle (3pt);
\draw[fill=black] (2,15.5) circle (3pt) (3,15.5) circle (3pt) (2,16.5) circle (3pt);
\draw (5,15.5) rectangle (6,16.5)  node [midway] {$C_2$};
\draw (5,16.5) -- (6,16.5) node [midway, above] {$5$};
\draw (5,15.5) -- (6,15.5) node [midway, below] {$4$};
\draw[fill=white] (5,15.5) circle (3pt) (6,15.5) circle (3pt);
\draw[fill=black] (5,16.5) circle (3pt) (6,16.5) circle (3pt);
\draw (7,15.5) rectangle (10,16.5)  node [midway] {$C_3$};
\draw (9,16.5) -- (10,16.5) node [midway, above] {$4$};
\draw (8,16.5) -- (9,16.5) node [midway, above] {$3$};
\draw (7,16.5) -- (8,16.5) node [midway, above] {$1$};
\draw (8,15.5) -- (10,15.5) node [midway, below] {$2$};
\draw (7,15.5) -- (8,15.5) node [midway, below] {$1$};
\draw[fill=white] (7,15.5) circle (3pt) (10,15.5) circle (3pt)
                  (7,16.5) circle (3pt) (9,16.5) circle (3pt) (10,16.5) circle (3pt);
\draw[fill=black] (8,15.5) circle (3pt) (8,16.5) circle (3pt);
\node (M3) at (0,13.5) {SD3};  
\draw (1,13) rectangle (4,14)  node [midway] {$C_1$};

\draw (1,14) -- (4,14) node [midway, above] {$4$};
\draw (3,13) -- (4,13) node [midway, below] {$2$};
\draw (2,13) -- (3,13) node [midway, below] {$1$};
\draw (1,13) -- (2,13) node [midway, below] {$0$};
\draw[fill=white] (1,14) circle (3pt) (4,14) circle (3pt)
(2,13) circle (3pt);
\draw[fill=black] (1,13) circle (3pt) (3,13) circle (3pt) (4,13) circle (3pt);
\draw (5,13) rectangle (8,14)  node [midway] {$C_2$};
\draw (7,14) -- (8,14) node [midway, above] {$2$};
\draw (5,14) -- (7,14) node [midway, above] {$3$};
\draw (7,13) -- (8,13) node [midway, below] {$5$};
\draw (5,13) -- (7,13) node [midway, below] {$4$};
\draw[fill=black] (5,14) circle (3pt) (7,14) circle (3pt) (8,14) circle (3pt);
\draw[fill=white] (5,13) circle (3pt) (7,13) circle (3pt) (8,13) circle (3pt);
\draw (9,13) rectangle (12,14)  node [midway] {$C_3$};
\draw (11,14) -- (12,14) node [midway, above] {$1$};
\draw (10,14) -- (11,14) node [midway, above] {$5$};
\draw (9,14) -- (10,14) node [midway, above] {$0$};
\draw (9,13) -- (12,13) node [midway, below] {$3$};
\draw[fill=black] (9,13) circle (3pt) (9,14) circle (3pt) (12,13) circle (3pt) (12,14) circle (3pt);
\draw[fill=white] (10,14) circle (3pt) (11,14) circle (3pt);
%
\node (M4) at (0,11) {SD4};  
\draw (1,10.5) rectangle (3,11.5)  node [midway] {$C_1$};
\draw (2,11.5) -- (3,11.5) node [midway, above] {$4$};
\draw (1,11.5) -- (2,11.5) node [midway, above] {$0$};
\draw (2,10.5) -- (3,10.5) node [midway, below] {$3$};
\draw (1,10.5) -- (2,10.5) node [midway, below] {$0$};
\draw[fill=white] (1,10.5) circle (3pt) (3,10.5) circle (3pt)
(1,11.5) circle (3pt) (3,11.5) circle (3pt);
\draw[fill=black] (2,10.5) circle (3pt) (2,11.5) circle (3pt);
\draw (4,10.5) rectangle (6,11.5)  node [midway] {$C_2$};
\draw (5,11.5) -- (6,11.5) node [midway, above] {$3$};
\draw (4,11.5) -- (5,11.5) node [midway, above] {$2$};
\draw (5,10.5) -- (6,10.5) node [midway, below] {$5$};
\draw (4,10.5) -- (5,10.5) node [midway, below] {$2$};
\draw[fill=white] (4,10.5) circle (3pt) (6,10.5) circle (3pt)
(4,11.5) circle (3pt) (6,11.5) circle (3pt);
\draw[fill=black] (5,10.5) circle (3pt) (5,11.5) circle (3pt);
\draw (7,10.5) rectangle (9,11.5)  node [midway] {$C_3$};
\draw (8,11.5) -- (9,11.5) node [midway, above] {$5$};
\draw (7,11.5) -- (8,11.5) node [midway, above] {$1$};
\draw (8,10.5) -- (9,10.5) node [midway, below] {$4$};
\draw (7,10.5) -- (8,10.5) node [midway, below] {$1$};
\draw[fill=white] (7,10.5) circle (3pt) (9,10.5) circle (3pt)
(7,11.5) circle (3pt) (9,11.5) circle (3pt);
\draw[fill=black] (8,10.5) circle (3pt) (8,11.5) circle (3pt);
%
\node (M5) at (0,8.5) {SD5};  
\draw (1,8) rectangle (3,9)  node [midway] {$C_1$};
\draw (2,9) -- (3,9) node [midway, above] {$5$};
\draw (1,9) -- (2,9) node [midway, above] {$3$};
\draw (2,8) -- (3,8) node [midway, below] {$2$};
\draw (1,8) -- (2,8) node [midway, below] {$0$};
\draw[fill=black] (1,8) circle (3pt) (3,8) circle (3pt)
(1,9) circle (3pt) (3,9) circle (3pt);
\draw[fill=white] (2,8) circle (3pt) (2,9) circle (3pt);
\draw (4,8) rectangle (6,9)  node [midway] {$C_2$};
\draw (5,9) -- (6,9) node [midway, above] {$1$};
\draw (4,9) -- (5,9) node [midway, above] {$0$};
\draw (5,8) -- (6,8) node [midway, below] {$5$};
\draw (4,8) -- (5,8) node [midway, below] {$4$};
\draw[fill=black] (4,8) circle (3pt) (6,8) circle (3pt)
(4,9) circle (3pt) (6,9) circle (3pt);
\draw[fill=white] (5,8) circle (3pt) (5,9) circle (3pt);
\draw (7,8) rectangle (9,9)  node [midway] {$C_3$};
\draw (8,9) -- (9,9) node [midway, above] {$4$};
\draw (7,9) -- (8,9) node [midway, above] {$2$};
\draw (8,8) -- (9,8) node [midway, below] {$3$};
\draw (7,8) -- (8,8) node [midway, below] {$1$};
\draw[fill=black] (8,8) circle (3pt) (8,9) circle (3pt);
\draw[fill=white] (7,8) circle (3pt) (9,8) circle (3pt)
                  (7,9) circle (3pt) (9,9) circle (3pt);
%
\node (M6) at (0,6) {SD6};  
\draw (1,5.5) rectangle (3,6.5)  node [midway] {$C_1$};
\draw (1,6.5) -- (3,6.5) node [midway, above] {$2$};
\draw (2,5.5) -- (3,5.5) node [midway, below] {$3$};
\draw (1,5.5) -- (2,5.5) node [midway, below] {$0$};
\draw[fill=black] (1,5.5) circle (3pt) (3,5.5) circle (3pt)
                  (1,6.5) circle (3pt) (3,6.5) circle (3pt);
\draw[fill=white] (2,5.5) circle (3pt);
\draw (4,5.5) rectangle (7,6.5)  node [midway] {$C_2$};
\draw (6.5,6.5) -- (7,6.5) node [midway, above] {$4$};
\draw (5,6.5) -- (6.5,6.5) node [midway, above] {$1$};
\draw (4,6.5) -- (5,6.5) node [midway, above] {$0$};
\draw (6.5,5.5) -- (7,5.5) node [midway, below] {$4$};
\draw (5.5,5.5) -- (6.5,5.5) node [midway, below] {$5$};
\draw (4,5.5) -- (5.5,5.5) node [midway, below] {$2$};
\draw[fill=black] (4,5.5) circle (3pt) (5.5,5.5) circle (3pt) (7,5.5) circle (3pt)
                  (4,6.5) circle (3pt) (7,6.5) circle (3pt);
\draw[fill=white] (5,6.5) circle (3pt) (6.5,6.5) circle (3pt) (6.5,5.5) circle (3pt);
\draw (8,5.5) rectangle (10,6.5)  node [midway] {$C_3$};
\draw (9,6.5) -- (10,6.5) node [midway, above] {$5$};
\draw (8,6.5) -- (9,6.5) node [midway, above] {$3$};
\draw (8,5.5) -- (10,5.5) node [midway, below] {$1$};
\draw[fill=black] (9,6.5) circle (3pt);
\draw[fill=white] (8,6.5) circle (3pt) (10,6.5) circle (3pt) 
                  (8,5.5) circle (3pt) (10,5.5) circle (3pt);
%
\node (M7) at (0,3.5) {SD7};
\draw (1,3) rectangle (2,4)  node [midway] {$C_1$};
\draw (1,4) -- (2,4) node [midway, above] {$2$};
\draw (1,3) -- (2,3) node [midway, below] {$0$};
\draw[fill=black] (1,4) circle (3pt) (2,4) circle (3pt)
                  (1,3) circle (3pt) (2,3) circle (3pt);
\draw (3,3) rectangle (6.5,4) node [midway] {$C_2$};
\draw (6,4) -- (6.5,4) node [midway, above] {$5$};
\draw (5,4) -- (6,4) node [midway, above] {$1$};
\draw (4,4) -- (5,4) node [midway, above] {$4$};
\draw (3,4) -- (4,4) node [midway, above] {$0$};
\draw (3,3) -- (4,3) node [midway, below] {$2$};
\draw (4,3) -- (5,3) node [midway, below] {$4$};
\draw (5,3) -- (6,3) node [midway, below] {$3$};
\draw (6,3) -- (6.5,3) node [midway, below] {$5$};
\draw[fill=black] (3,3) circle (3pt) (4,3) circle (3pt) (6.5,3) circle (3pt)
                  (3,4) circle (3pt) (4,4) circle (3pt) (6.5,4) circle (3pt);
\draw[fill=white] (5,3) circle (3pt) (6,3) circle (3pt)
                  (5,4) circle (3pt) (6,4) circle (3pt);
\draw (7.5,3) rectangle (8.5,4)  node [midway] {$C_3$};
\draw (7.5,4) -- (8.5,4) node [midway, above] {$3$};
\draw (7.5,3) -- (8.5,3) node [midway, below] {$1$};
\draw[fill=white] (7.5,4) circle (3pt) (8.5,4) circle (3pt)
                  (7.5,3) circle (3pt) (8.5,3) circle (3pt);                  
%
\node (M8) at (0,1) {SD8}; 
\draw (1,0.5) rectangle (2,1.5)  node [midway] {$C_1$};
\draw (1,1.5) -- (2,1.5) node [midway, above] {$4$};
\draw (1,.5) -- (2,.5) node [midway, below] {$0$};
\draw[fill=black] (1,1.5) circle (3pt) (2,1.5) circle (3pt);
\draw[fill=white] (1,0.5) circle (3pt) (2,0.5) circle (3pt);
\draw (3,0.5) rectangle (6.5,1.5) node [midway] {$C_2$};
\draw (5.5,1.5) -- (6.5,1.5) node [midway, above] {$1$};
\draw (5,1.5) -- (5.5,1.5) node [midway, above] {$5$};
\draw (4,1.5) -- (5,1.5) node [midway, above] {$2$};
\draw (3,1.5) -- (4,1.5) node [midway, above] {$0$};
\draw (3,.5) -- (4,.5) node [midway, below] {$2$};
\draw (4,.5) -- (5,.5) node [midway, below] {$3$};
\draw (5,.5) -- (6,.5) node [midway, below] {$4$};
\draw (6,.5) -- (6.5,.5) node [midway, below] {$5$};
\draw[fill=black] (4,.5) circle (3pt) (5,.5) circle (3pt)
                  (6,.5) circle (3pt) (5,1.5) circle (3pt);
\draw[fill=white] (3,0.5) circle (3pt) (6.5,0.5) circle (3pt)
                  (3,1.5) circle (3pt) (4,1.5) circle (3pt) 
                  (5.5,1.5) circle (3pt) (6.5,1.5) circle (3pt);
\draw (7.5,0.5) rectangle (8.5,1.5)  node [midway] {$C_3$};
\draw (7.5,1.5) -- (8.5,1.5) node [midway, above] {$3$};
\draw (7.5,.5) -- (8.5,.5) node [midway, below] {$1$};
\draw[fill=black] (7.5,1.5) circle (3pt) (8.5,1.5) circle (3pt);
\draw[fill=white] (7.5,0.5) circle (3pt) (8.5,0.5) circle (3pt);

\end{tikzpicture}
\end{center}
\caption{List of possible separatrix diagrams in a suitable direction, 
case $\Prym[3](2,2)$.} 
\label{fig:SuitableDir22} 
\end{figure}
\par


\subsection{Output of the algorithm}

For instance for the first intersection matrix $M^{\rm red}_1$ and the
trace field $\QQ[\sqrt{2}]$, we found $228$ arithmetic surfaces, hence $228$
candidates surfaces. Only $6$ solutions have a vertical admissible direction.
Some of them give the same prototype, in fact there are three different
prototype, as listed in Table~\ref{cap:SD4}. The other intersection matrices
are treated in the same way.
\par
\begin{table}
$$
\begin{array}{|c|c|c|c|c|}
\hline
\multirow{1}{*}{Reduced matrix} & \textrm{\# Arithm.\ surf.} & \textrm{Prototypes } (w,h,t,e) & \textrm{slits} &\textrm{disc.} \\
\hline \hline 
  \multirow{3}{*}{$\left( \begin{smallmatrix} 72 & 48 \\ 24 & 18 \end{smallmatrix} \right)$}&  \multirow{3}{*}{228} &  (4, 1, 0, 0) & \tfrac{3+2\sqrt{2}}{6} & 32\\
    && (12, 3, 1, 0)  & \tfrac{3+2\sqrt{2}}{6} & 288\\
    && (12, 3, 2, 0)  & \tfrac{3+2\sqrt{2}}{6} & 288\\
\hline \hline
  \multirow{3}{*}{$ \left( \begin{smallmatrix} 72 & 24 \\ 12 & 6 \end{smallmatrix} \right)$}&  \multirow{3}{*}{32} &  (4, 1, 0, -4) & \tfrac{4+\sqrt{3}}{6} & 48\\
    && (12, 3, 1, -12)  & \tfrac{4+\sqrt{3}}{6} & 432\\
    && (12, 3, 2, -12)  & \tfrac{4+\sqrt{3}}{6} & 432\\
    \hline
  \multirow{3}{*}{$ \left( \begin{smallmatrix} 72 & 24 \\ 48 & 18 \end{smallmatrix} \right)$}&  \multirow{3}{*}{336} &  (4, 1, 0, 4) & \tfrac{\sqrt{3}}{6} & 48\\
    && (12, 3, 1, 12)  & \tfrac{\sqrt{3}}{6} & 432\\
    && (12, 3, 2, 12)  & \tfrac{\sqrt{3}}{6} & 432\\
    \hline
      \multirow{6}{*}{$\left( \begin{smallmatrix} 36 & 12 \\ 30 & 12 \end{smallmatrix} \right)$}&  \multirow{6}{*}{180} &  (6, 9, 1, 0) & \tfrac{6-\sqrt{3}}{18} & 432\\
    && (6, 9, 2, 0)  & \tfrac{6-\sqrt{3}}{18} & 432\\
    && (6, 9, 1, 0)  & \tfrac{6+\sqrt{3}}{18} & 432\\
    && (6, 9, 2, 0)  & \tfrac{6+\sqrt{3}}{18} & 432\\
    && (12, 18, 1, 0)  & \tfrac{1+\sqrt{3}}{6} & 1728\\
    && (12, 18, 5, 0)  & \tfrac{1+\sqrt{3}}{6} & 1728\\
\hline \hline
  \multirow{1}{*}{$ \left( \begin{smallmatrix} 6 & 24 \\ 12 & 54 \end{smallmatrix} \right)$}&  \multirow{1}{*}{24} &  \textrm{no solutions} &  & \\
    \hline
  \multirow{1}{*}{$ \left( \begin{smallmatrix} 6 & 24 \\ 3 & 18 \end{smallmatrix} \right)$}&  \multirow{1}{*}{0} &  \textrm{no 
  solutions} &  & \\
    \hline
      \multirow{1}{*}{$\left( \begin{smallmatrix} 3 & 6 \\ 3 & 0 \end{smallmatrix} \right)$}&  \multirow{1}{*}{0} &  \textrm{no 
      solutions} &  & \\
    \hline    
\end{array}
$$
\caption{Number of arithmetic candidate surfaces and prototypes with
separatrix diagram SD4.
}
\label{cap:SD4}
\end{table}
\par
\begin{Rem}
\label{rem:noTC}
{\rm In the examples where the twist parameter is zero, one can additionally
check the rationality constraint of ratios of moduli of vertical 
cylinders (see~\cite[Theorem 6.3]{mcmullentor} for similar computations), 
since the cylinder decomposition in the vertical direction is easily computed.
In the three examples in the preceding table with twist zero the moduli 
are indeed not commensurable, thus ruling out these $3$ cases.
}\end{Rem}
\par
In view of the above remark and the results of the table, one concludes that 
there are at most $12$ Teichm\"uller curves for which there is a translation 
surface having a cylinder decomposition with separatrix diagram SD4  
of Figure~\ref{fig:SuitableDir22}.
\par
\medskip
For each of the $7$ remaining possible cylinder decomposition
in Figure~\ref{fig:SuitableDir22} we apply the algorithm described above.
The results are presented in slightly more condensed form in
Table~\ref{cap:algo} below. A quick inspection of this table (combined 
with Remark~\ref{rem:noTC}) reveals the $92$ remaining cases which 
concludes the proof of Theorem~\ref{thm:main}.
\begin{table}
$$
\begin{array}{|c||c|c|c|c|c|c|c|c|}
\hline
\multirow{1}{*}{Reduced matrix $\backslash$ SD} & 1 & 2 & 3 & 4 & 5
& 6 & 7 & 8 \\
\hline \hline
  \multirow{2}{*}{$M_1^{\rm red}=\left( \begin{smallmatrix} 72 & 48 \\ 24 & 18
\end{smallmatrix} \right)$}& 520 &  176 & \multirow{2}{*}{-} & 228 &
342 & \multirow{2}{*}{-}&\multirow{2}{*}{-}&\multirow{2}{*}{-}\\
    &{\bf 12}& {\bf 1}  &  & {\bf 3} & {\bf 4} & &&\\
\hline \hline
  \multirow{2}{*}{$M_2^{\rm red}= \left( \begin{smallmatrix} 72 & 24 \\ 12 & 6
\end{smallmatrix} \right)$}&  108 & 63 &\multirow{2}{*}{-} &
32&88&\multirow{2}{*}{-}&\multirow{2}{*}{-}&\multirow{2}{*}{-}\\
    &{\bf 8}&{\bf 0}&&{\bf 3}&{\bf 0}&&&\\
    \hline
  \multirow{2}{*}{$M_3^{\rm red=} \left( \begin{smallmatrix} 72 & 24 \\ 48 & 18
\end{smallmatrix} \right)$}&
\multirow{2}{*}{-}&\multirow{2}{*}{-}&23&336&290&186&\multirow{2}{*}{-}&\multirow{2}{*}{-}\\
    &&&{\bf 0}&{\bf 3}&{\bf 8}&{\bf 3}&&\\
    \hline
      \multirow{2}{*}{$M_4^{\rm red}= \left( \begin{smallmatrix} 36 & 12 \\ 30 & 12
\end{smallmatrix} \right)$}&
\multirow{2}{*}{-}&\multirow{2}{*}{-}&0&180&48&214&\multirow{2}{*}{-}&\multirow{2}{*}{-}\\
    &&&{\bf 0}&{\bf 6}&{\bf 0}&{\bf 8}&&\\
\hline \hline
  \multirow{2}{*}{$M_5^{\rm red}= \left( \begin{smallmatrix} 6 & 24 \\ 12 & 54
\end{smallmatrix} \right)$}&
\multirow{2}{*}{-}&\multirow{2}{*}{-}&\multirow{2}{*}{-}&24&\multirow{2}{*}{-}&124&392&210\\
  &&&&{\bf 0}&&{\bf 7}&{\bf 18}&{\bf 20}\\
    \hline
  \multirow{2}{*}{$M_6^{\rm red}= \left( \begin{smallmatrix} 6 & 24 \\ 3 & 18
\end{smallmatrix} \right)$}&
\multirow{2}{*}{-}&\multirow{2}{*}{-}&0&0&0&0&\multirow{2}{*}{-}&\multirow{2}{*}{-}\\
  &&&{\bf 0}&{\bf 0}&{\bf 0}&{\bf 0}&&\\
    \hline
      \multirow{2}{*}{$M_7^{\rm red}=\left( \begin{smallmatrix} 3 & 6 \\ 3 & 0
\end{smallmatrix} \right)$}&  \multirow{2}{*}{-}
&\multirow{2}{*}{-}&\multirow{2}{*}{-}&0&0&\multirow{2}{*}{-}&\multirow{2}{*}{-}&\multirow{2}{*}{-}\\
  &&&&{\bf 0}&{\bf 0}&&&\\
    \hline    \hline
\# \textrm{ Candidates}  &{\bf 20}&{\bf 1}&{\bf 0}&{\bf 15}&{\bf
12}&{\bf 18}&{\bf 18}&{\bf 20}\\
  \hline
\end{array}
$$
\caption{Candidates for Teichm\"uller discs for each model. In bold we have 
indicated the number of candidate surfaces with an admissible vertical direction.}
\label{cap:algo}
\end{table}

%
%
%
%
%
%

\printbibliography

\end{document}